\numberwithin{equation}{section}
\theoremstyle{definition}
\newtheorem{thm}{Theorem}[section]
\newtheorem{defn}[thm]{Definition}
\newtheorem{ex}[thm]{Example}
\newtheorem{cor}[thm]{Corollary}
\newtheorem{lem}[thm]{Lemma}
\newtheorem{prop}[thm]{Proposition}
\newtheorem{rem}[thm]{Remark}
\newcommand{\R}{\mathbb{R}}   
\newcommand{\C}{\mathbb{C}}   
\newcommand{\N}{\mathbb{N}}   
\newcommand{\Z}{\mathbb{Z}}
\newcommand{\calH}{{\mathcal{H}}}
\newcommand{\calP}{{\mathcal{P}}}
\newcommand{\calZ}{{\mathcal{Z}}}
\newcommand{\rmd}{{\rm d}}
\title{Generalized Meixner-type free gamma distributions: \\
convolution formulas and potential correspondence}
\author{Noriyoshi Sakuma and Yuki Ueda}
\begin{document}
\maketitle  

\begin{abstract}
We introduce and study a class of generalized Meixner-type free gamma distributions $\mu_{t,\theta,\lambda}$ ($t,\theta>0$ and $\lambda\ge 1$), which includes both the free gamma distributions introduced by Anshelevich and certain scaled free beta prime distributions introduced by Yoshida. 
We investigate fundamental properties and mixture structures of these distributions. In particular, we consider the Gibbs distribution $\frac{1}{\mathcal{Z}_{t,\theta,\lambda}} \exp\{-V_{t,\theta,\lambda}(x)\}$ associated with a family of potentials $V_{t,\theta,\lambda}$, and show that $\mu_{t,\theta,\lambda}$ maximizes Voiculescu's free entropy with potential $V_{t,\theta,\lambda}$ for parameters $t,\theta>0$ and $1\le \lambda<1+t/\theta$. This result substantially extends the range of classcal-free correspondences obtained the potential function, differing from those arising from the Bercovici-Pata bijection. Moreover, we identify algebraic relations involving noncommutative random variables distributed as free gamma distributions.
\end{abstract}

\noindent
\textbf{MSC 2020:} 46L54 \\ 
\textbf{Keywords:} Free probability, free Meixner distributions, free gamma distributions, free convolutions, potential correspondence, free beta-gamma algebras.


\section{Introduction}
\label{sec:intro}

In free probability theory, the Bercovici-Pata bijection, which connects infinitely divisible laws with free counterparts, is often discussed in relation to its correspondence with classical probability \cite{BNT06}. A recurring issue in this context is the inadequate correspondence of the gamma distribution, which has hindered a comprehensive understanding of its free counterpart.  
The image of the class of classical gamma distributions under the Bercovici-Pata bijection was introduced by P\'{e}rez-Abreu and Sakuma \cite{PAS08}, but its distributional properties are not well understood (see e.g. \cite{HT14}).  

Meanwhile, an alternative approach was developed independently, focusing on orthogonal polynomials to investigate the properties of the ``free'' gamma distribution \cite{Ans03, BB06}. However, the ``free'' gamma distribution remains relatively underexplored, particularly in terms of its interpretation and characterization, leaving significant scope for further investigation. This gap may stem from inadequate parameter consideration and the lack of a proper examination of the distribution family as a whole. In this paper, we introduce a new class of probability distributions that extends the framework of the ``free'' gamma distribution. By refining the parameterization and exploring novel correspondences, we aim to overcome the limitations of prior research and provide deeper insights into the structure and potential applications of these distributions.

Before providing explanations, we introduce the notation used in this paper. We denote by $\calP(K)$ the set of all Borel probability measures on $\R$ whose support is contained in $K\subset \R$. In this paper, we frequently take $K$ to be the real line $\R$, the nonnegative real line $\R_{\ge0}:=[0,\infty)$, or the positive real line $\R_{>0}:=(0,\infty)$. Moreover, we adopt the following notational conventions.

\begin{itemize}
\setlength{\itemsep}{-\parsep}
\item ({\it Dilation}) For $\mu \in \mathcal{P}(\mathbb{R})$ and $c \neq 0$, we define $D_c(\mu)$ as the measure given by
\[
D_c(\mu)(B) := \mu(\{x/c : x \in B\}), \qquad \text{for all Borel sets } B \subset \mathbb{R}.
\]

\item ({\it Power of measure}) For $\mu \in \mathcal{P}(\mathbb{R}_{\ge 0})$ and $c > 0$, we define $\mu^{\langle c \rangle}$ by
\[
\mu^{\langle c \rangle}(B) := \mu(\{x^{1/c} : x \in B\}), \qquad \text{for all Borel sets } B \subset \mathbb{R}_{\ge 0}.
\]

\item ({\it Reversed measure}) For $\mu \in \mathcal{P}(\mathbb{R}_{\ge 0})$ with $\mu(\{0\}) = 0$, we define $\mu^{\langle -1 \rangle}$ by
\[
\mu^{\langle -1 \rangle}(B) := \mu(\{x^{-1} : x \in B\}), \qquad \text{for all Borel sets } B \subset \mathbb{R}_{>0}.
\]
\end{itemize}
Moreover, for $w \in \C\setminus\{0\}$, we define $\sqrt{w} := |w|^{\frac{1}{2}} e^{i\frac{\arg(w)}{2}}$ with $\arg(w) \in (0,2\pi)$. 

First, we briefly recall the definition of the classical gamma distribution, which is widely used in classical probability theory. The classical gamma distribution is a two-parameter family of probability distributions, defined as  
$$
\gamma(t, \theta) := \frac{1}{\theta^t \Gamma(t)} x^{t-1} e^{-\frac{x}{\theta}} \mathbf{1}_{(0,\infty)}(x) \, {\rm d}x, \qquad t, \theta > 0,
$$  
where \( t \) is the shape parameter and \( \theta \) is the scale parameter. The gamma distribution is known to be infinitely divisible, and its characteristic function can be expressed in terms of the L{\'e}vy-Khintchine representation as  
$$
\int_{\R} e^{izx} \gamma(t, \theta)({\rm d}x)
= \exp \left[ \int_{(0,\infty)} \left(e^{izx} - 1\right) \frac{t e^{-\frac{x}{\theta}}}{x} \, \rmd x \right], \qquad z \in \mathbb{R}
$$  
where the L{\'e}vy measure is given by  
$$
\nu(\rmd x) = \frac{t e^{-\frac{x}{\theta}}}{x} \mathbf{1}_{(0,\infty)}(x) \rmd x.
$$
It includes many distributions, such as the exponential distribution, Erlang distribution, and chi-squared distribution.
In addition, the following properties hold: 
\begin{itemize}
\setlength{\itemsep}{-\parsep}
\item $\gamma(1, \theta)^{\ast t} =\gamma(t, \theta)$ for all $t,\theta>0$;
\item $\gamma(t_{1}, \theta)\ast\gamma(t_{2}, \theta) = \gamma(t_{1}+t_{2},\theta)$ for all  $t_1,t_2,\theta>0$;
\item $D_{\theta}(\gamma(t,1))=\gamma(t,\theta)$ for all $t,\theta>0$.
\end{itemize} 

The first property motivates the consideration of the gamma L{\'e}vy process, where the shape parameter can also be interpreted as a ``time parameter'' in the context of stochastic processes. 
The second property is known as the reproductive property of probability distributions. The third property shows that $\theta$ is a scare parameter for gamma distributions.
Moreover, the gamma distribution has been extensively characterized in the literature; see, for example \cite{Mos85, B92, Sat13}.
Thus, the classical gamma distribution has attracted significant interest from many fields, such as probability, mathematical statistics, Bayesian statistics, econometrics, queueing theory and so on.

Given the richness of its structure, it was a natural step to investigate its analogue in free probability theory. In 2003, Anshelevich introduced the free gamma distribution as a subfamily of the free Meixner class (see \cite[Page 238]{Ans03}). More precisely, the {\it Meixner-type free gamma distribution}\footnote{In 2008, Pérez-Abreu and Sakuma \cite{PAS08} introduced another type of free gamma distribution via the Bercovici--Pata bijection, which maps the class of infinitely divisible distributions to their free counterparts. Therefore, we distinguish between two types of free gamma distributions by name.}, denoted by $\eta(t,\theta)$, is defined as the probability measure whose R-transform\footnote{Our R-transform differs from that given in \cite[Page 238]{Ans03} by a multiplicative factor of $z$.} is given by
\begin{align}\label{eq:R_eta(t,theta)}
R_{\eta(t,\theta)}(z) = \int_{\R} \left(\frac{1}{1-zx}-1\right) \frac{t\theta k_{\theta,1}(x)}{x}{\rm d}x = \frac{t}{2}(1-\sqrt{1-4\theta z}), \quad z\in \C^-,
\end{align}
where $t>0$ is the time (or shape) parameter, $\theta>0$ is the scale parameter
and the function $k_{\theta,\lambda}$ is defined as
$$
k_{\theta,\lambda}(x):= \frac{\sqrt{(a^+-x)(x-a^-)}}{2\pi \theta x} \mathbf{1}_{(a^-,a^+)}(x), \qquad \lambda \ge1,
$$
with $a^\pm :=\theta(\sqrt{\lambda}\pm 1)^2$. For all $t,\theta>0$, the measure $\eta(t,\theta)$ is freely infinitely divisible, which serves as the analogue of infinite divisibility in free probability. Moreover, the following properties hold:
\begin{itemize}
\setlength{\itemsep}{-\parsep}
\item $\eta(1,\theta)^{\boxplus t}=\eta(t,\theta) $ for all $t,\theta>0$;
\item $\eta(t_1,\theta) \boxplus \eta(t_2,\theta) =  \eta(t_1+t_2,\theta)$ for all $t_1,t_2,\theta>0$;
\item $D_\theta(\eta(t,1))=\eta(t,\theta)$ for all $t,\theta>0$,
\end{itemize}
where $\boxplus$ denotes the free additive convolution, and $\mu^{\boxplus t}$ is the free convolution power of $\mu\in \calP(\R)$. See Section \ref{subsec_freeprob} or \cite{BV1993} for the above concepts in free probability.

In this paper, we introduce a generalized family of probability measures that includes the Meixner-type free gamma distributions $\eta(t, \theta)$. Specifically, we consider the family $\{\mu_{t, \theta, \lambda} : t, \theta > 0,\ \lambda \ge 1\} \subset \calP(\R_{\ge0})$, where each measure is defined via its R-transform as
$$
R_{\mu_{t,\theta,\lambda}}(z) = \int_{\R}\left(\frac{1}{1-zx}-1\right) \frac{t k_{\theta,\lambda}(x)}{x}{\rm d}x, \quad z\in \C^-.
$$
We call the measure $\mu_{t, \theta, \lambda}$ the {\it generalized Meixner-type free gamma distribution}. It is straightforward to verify that 
$$
\mu_{t\theta, \theta, 1} = \eta(t, \theta), \qquad t,\theta>0,
$$
and therefore the family indeed extends the class of Meixner-type free gamma distributions. Moreover, the parameter $t$ admits a natural interpretation as a free convolution power:
$$
\mu_{t,\theta,\lambda} = \mu_{1,\theta,\lambda}^{\boxplus t}, \qquad t>0.
$$
Note that the measure $\mu_{t,\theta,\lambda}$ coincides with the centered free Meixner distribution, up to a shift (see Section~\ref{sec2.2} and Proposition \ref{prop:relation_freeMeixner}). Below, we outline the structure of the paper and summarize our main results.

In Section \ref{sec3}, we study various distributional properties of the generalized Meixner-type free gamma distributions, including their density, existence of atoms, and moments (see Section~\ref{sec3.1}), as well as free selfdecomposability and unimodality (see Section~\ref{sec3.3}). Furthermore, we study the free L\'{e}vy processes related to the measures $\mu_{t,\theta,\lambda}$ in Sections~\ref{sec:3.4}--\ref{sec:3.5}.

In Section~\ref{sec4}, we derive formulas involving the free multiplicative convolution $\boxtimes$.

\begin{thm}[{\bf Free convolution formula for the measure $\mu_{t,\theta,\lambda}$}, see Theorem \ref{thm:formula_free_multiplicative}]\label{mainthm2}
Let $t,\theta>0$ and $\lambda \ge1$. Then the following properties hold.
\begin{enumerate}[\rm (1)]
\setlength{\itemsep}{-\parsep}
\item 
For $\lambda > 1$, the measure $\mu_{t,\theta,\lambda}$ can be expressed in two equivalent forms:
\begin{align*}
\mu_{t,\theta,\lambda}  
 = D_{t(\lambda-1)}\left(\pi_{1,\frac{t}{\theta(\lambda-1)}} \boxtimes (\pi_{1,1+\frac{t}{\theta}})^{\langle -1 \rangle}\right) = \mu_{t,\theta,1} \boxtimes \pi_{q^{-1},q}.
\end{align*}
Here, $q = \frac{t}{\theta(\lambda-1)}$ and $\pi_{\theta,\lambda}$ is the probability measure defined by
\[
\pi_{\theta,\lambda}({\rm d}x) = \max\{0, 1-\lambda\}\delta_0 + k_{\theta,\lambda}(x)\,{\rm d}x.
\]
\item In particular, $\mu_{t,\theta,1+t/\theta}= \mu_{t,\theta,1} \boxtimes \pi_{1,1}$. Hence the measure $\mu_{t,\theta,1+t/\theta}$ belongs to the class of free compound Poisson distributions.
\end{enumerate}
\end{thm}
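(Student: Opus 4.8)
The plan is to pass everything to the $S$-transform, which turns the free multiplicative convolution $\boxtimes$ into ordinary multiplication and interacts transparently with dilations and reversals. Recall that for $\mu\in\calP(\R_{\ge0})$ with positive mean the $S$-transform $S_\mu$ is well defined on a left neighbourhood of $0$ and determines $\mu$ uniquely, and that $S_{\mu\boxtimes\nu}=S_\mu S_\nu$, $S_{D_c(\mu)}(z)=c^{-1}S_\mu(z)$ for $c>0$, and $S_{\mu^{\langle -1\rangle}}(z)=1/S_\mu(-1-z)$ whenever $\mu(\{0\})=0$. Since $\mu_{t,\theta,\lambda}$, the measures $\pi_{\theta',\lambda'}$, and the reversal $(\pi_{1,1+t/\theta})^{\langle -1\rangle}$ all live on $\R_{\ge0}$ (the last reversal being licit because $1+t/\theta>1$ forces $\pi_{1,1+t/\theta}(\{0\})=0$), it suffices to compute the $S$-transforms of the three expressions and check that they agree.

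Two explicit $S$-transforms are the whole game. First, $\pi_{\theta,\lambda}=D_\theta(\pi_{1,\lambda})$ is the dilation of the free Poisson (Marchenko--Pastur) law $\pi_{1,\lambda}$, whose $S$-transform is $1/(\lambda+z)$; hence $S_{\pi_{\theta,\lambda}}(z)=\frac{1}{\theta(\lambda+z)}$. Second, I compute $S_{\mu_{t,\theta,\lambda}}$ from its defining R-transform. Using $\big(\frac{1}{1-zx}-1\big)\frac1x=\frac{z}{1-zx}$ and, for $\lambda\ge1$, the fact that $k_{\theta,\lambda}$ is the full (atomless) density of $\pi_{\theta,\lambda}$, the defining integral collapses to
\[
R_{\mu_{t,\theta,\lambda}}(z)=t\,G_{\pi_{\theta,\lambda}}(1/z),
\]
where $G_{\pi_{\theta,\lambda}}$ is the Cauchy transform. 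In the normalization of the R-transform used here, the moment--cumulant relation gives the conversion $S_\mu(z)=R_\mu^{-1}(z)/z$ (compositional inverse), so inverting $R_{\mu_{t,\theta,\lambda}}$ reduces to inverting $G_{\pi_{\theta,\lambda}}$, i.e. to the $K$-function $K_{\pi_{\theta,\lambda}}=G_{\pi_{\theta,\lambda}}^{-1}$. From $K_{\pi_{1,\lambda}}(u)=\frac1u+\frac{\lambda}{1-u}$ and the dilation rule $K_{\pi_{\theta,\lambda}}(u)=\theta K_{\pi_{1,\lambda}}(\theta u)$ one obtains, after simplification,
\[
S_{\mu_{t,\theta,\lambda}}(z)=\frac{t-\theta z}{t\big(t+(\lambda-1)\theta z\big)},
\]
which passes the sanity check $S_{\mu_{t,\theta,\lambda}}(0)=1/t$, consistent with $\mu_{t,\theta,\lambda}$ having mean $t$.

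With these in hand the two asserted identities reduce to algebra. For the right-hand expression, specializing the formula above at $\lambda=1$ gives $S_{\mu_{t,\theta,1}}(z)=(t-\theta z)/t^2$, while $S_{\pi_{q^{-1},q}}(z)=q/(q+z)$; multiplying and comparing with $S_{\mu_{t,\theta,\lambda}}$ forces exactly $q(\lambda-1)\theta=t$, that is $q=\frac{t}{\theta(\lambda-1)}$. For the middle expression, the reversal rule gives $S_{(\pi_{1,1+t/\theta})^{\langle -1\rangle}}(z)=(t-\theta z)/\theta$, whence $S_{\pi_{1,q}\boxtimes(\pi_{1,1+t/\theta})^{\langle -1\rangle}}(z)=\frac{t-\theta z}{\theta(q+z)}$, and applying $S_{D_{t(\lambda-1)}(\cdot)}=\frac{1}{t(\lambda-1)}S_{(\cdot)}$ again matches $S_{\mu_{t,\theta,\lambda}}$ precisely when $q=\frac{t}{\theta(\lambda-1)}$. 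Uniqueness of the $S$-transform then yields both equalities in (1).

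Finally, part (2) is the specialization $\lambda=1+t/\theta$ of (1): then $\lambda-1=t/\theta$, so $q=1$ and $\pi_{q^{-1},q}=\pi_{1,1}$, the standard free Poisson law, giving $\mu_{t,\theta,1+t/\theta}=\mu_{t,\theta,1}\boxtimes\pi_{1,1}$. Since free multiplicative convolution with $\pi_{1,1}$ produces exactly the free compound Poisson distribution with jump law $\mu_{t,\theta,1}$ (equivalently, the R-transform of $\mu_{t,\theta,1+t/\theta}$ already has the free compound Poisson form $\int(\frac{1}{1-zx}-1)\,\rho(\dd x)$ with finite jump measure $\rho=\frac{t\,k_{\theta,1+t/\theta}}{x}\,\dd x$), membership in the free compound Poisson class follows. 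The main obstacle is the second paragraph: obtaining $S_{\mu_{t,\theta,\lambda}}$ in closed form, which hinges on recognizing the R-transform as $t\,G_{\pi_{\theta,\lambda}}(1/z)$ and on careful bookkeeping of the domains on which $G_{\pi_{\theta,\lambda}}$, its inverse $K_{\pi_{\theta,\lambda}}$, and the resulting $S$-transform are simultaneously analytic (the branch of the square root in $G_{\pi_{1,\lambda}}$ and the left-neighbourhood-of-$0$ domain for $S$).
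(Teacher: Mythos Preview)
Your proof is correct and follows the same overall strategy as the paper: compute $S_{\mu_{t,\theta,\lambda}}$ in closed form and then verify both identities by matching $S$-transforms, using the standard rules for $\boxtimes$, dilation, and reversal together with the known $S$-transform of the Marchenko--Pastur law. The one genuine difference is how you obtain the key formula $S_{\mu_{t,\theta,\lambda}}(z)=\dfrac{t-\theta z}{t(t+\theta(\lambda-1)z)}$. The paper (in its Lemma preceding the theorem) first derives, via the connection with centered free Meixner laws, the explicit square-root expression
\[
R_{\mu_{t,\theta,\lambda}}(z)=\frac{t}{2\theta}\Bigl(1+\theta(1-\lambda)z-\sqrt{(1+\theta(1-\lambda)z)^2-4\theta z}\Bigr)
\]
and then inverts it algebraically. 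Your route---recognizing $R_{\mu_{t,\theta,\lambda}}(z)=t\,G_{\pi_{\theta,\lambda}}(1/z)$ and reducing the compositional inversion to the known $K$-function $K_{\pi_{1,\lambda}}(u)=\tfrac{1}{u}+\tfrac{\lambda}{1-u}$---sidesteps the square root entirely and is a cleaner shortcut; it also makes transparent why the Marchenko--Pastur law shows up on the right-hand side. The paper additionally isolates the identity $\mu_{t,\theta,1}=\pi_{\theta/t^2,\,1+t/\theta}^{\langle-1\rangle}$ as a separate lemma before assembling the second expression, whereas you obtain $S_{\mu_{t,\theta,1}}$ simply by specializing your general formula at $\lambda=1$; the end effect is the same.
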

According to Theorem~\ref{mainthm2} (1), for $\lambda > 1$, the measure $\mu_{t, \theta, \lambda}$ coincides with a suitably scaled free beta prime distribution $f\beta'(a, b)$ introduced in \cite{Yos20}. As a consequence, various properties of the free beta prime distribution can be derived from the results established for $\mu_{t, \theta, \lambda}$. See Section~\ref{sec6.2} for further details.

In Section \ref{sec5}, for $t,\theta>0$ and $\lambda\ge 1$, we consider the potential function $V_{t,\theta,\lambda}$ defined by
\begin{align*}
V_{t,\theta,\lambda} (x):= 
\begin{cases}
\left(2+\dfrac{t}{\theta}\right) \log x + \dfrac{t^2}{\theta x}, & \lambda=1,\\
\left(1-\dfrac{t}{\theta(\lambda-1)} \right) \log x + \left( 1 + \dfrac{t\lambda}{\theta(\lambda-1)}\right) \log (x+t(\lambda-1)), & \lambda>1.
\end{cases}
\end{align*}
We first derive the explicit form of the Gibbs measure
$$\rho_{t,\theta,\lambda}({\rm d}x) = \frac{1}{\mathcal{Z}_{t,\theta,\lambda}}\exp\{-V_{t,\theta,\lambda}(x)\}{\rm d}x,$$ 
associated with the potential $V_{t,\theta,\lambda}$, where the normalization constant is given by $\mathcal{Z}_{t,\theta,\lambda}= \int \exp\{- V_{t,\theta,\lambda}(x) \}{\rm d}x$.
\begin{itemize}
\setlength{\itemsep}{-\parsep}
\item For $\lambda=1$, we obtain
$$
\rho_{t,\theta,1}({\rm d}x) = \frac{(\frac{t^2}{\theta})^{1+\frac{t}{\theta}}}{\Gamma(1+\frac{t}{\theta})} x^{-(2+ \frac{t}{\theta})} e^{-\frac{t^2}{\theta x}} {\rm d}x.
$$
\item For $\lambda>1$, we get
$$
\rho_{t,\theta,\lambda}({\rm d}x)= \frac{(t(\lambda-1))^{1+\frac{t}{\theta}}}{B(\frac{t}{\theta(\lambda-1)},1+\frac{t}{\theta})} x^{-1 + \frac{t}{\theta(\lambda-1)}} \left(x+t(\lambda-1)\right)^{-1-\frac{t\lambda}{\theta(\lambda-1)}}{\rm d}x,
$$
where $B(a,b)$ denotes the beta function with parameter $a,b>0$.
\end{itemize}
The measure $\rho_{t,\theta,\lambda}$ coincides with the beta prime distribution when $\lambda > 1$. This close resemblance between classical and free analogues is striking and highlights deep structural parallels.

\begin{thm} ({\bf Convolution formula for the Gibbs measure $\rho_{t,\theta,\lambda}$}, see Theorem \ref{thm:rho_multiformula}) Let us consider $t,\theta>0$ and $\lambda \ge 1$. Then the following properties hold.
\begin{enumerate}[\rm (1)]
\setlength{\itemsep}{-\parsep}
\item For $\lambda>1$, we have
\begin{align*}
\rho_{t,\theta, \lambda} &= D_{t(\lambda-1)} \left( \gamma\left(\frac{t}{\theta(\lambda-1)},1\right) \circledast \gamma\left(1+\frac{t}{\theta},1\right)^{\langle -1\rangle}\right)\\
&=\rho_{t,\theta,1} \circledast \gamma\left(\frac{t}{\theta(\lambda-1)}, \frac{\theta(\lambda-1)}{t}\right),
\end{align*}
where $\circledast$ is the classical multiplicative convolution.
\item In particular, $\rho_{t,\theta, 1+t/\theta}= \rho_{t,\theta,1} \circledast \gamma(1,1)$. Hence the measure $\rho_{t,\theta, 1+t/\theta}$ belongs to the class of mixture of exponential distributions.
\end{enumerate}
\end{thm}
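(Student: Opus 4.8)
The plan is to recognize the two measures as classical special distributions and then invoke the classical gamma--beta algebra, so that the statement becomes the exact classical mirror of the free formula in Theorem~\ref{mainthm2}. First I would set $a:=\frac{t}{\theta(\lambda-1)}$ and $b:=1+\frac{t}{\theta}$ and observe that, for $\lambda>1$, the explicit density of $\rho_{t,\theta,\lambda}$ is that of a beta prime distribution dilated by $t(\lambda-1)$: the exponents indeed satisfy $-1+\frac{t}{\theta(\lambda-1)}=a-1$ and $-1-\frac{t\lambda}{\theta(\lambda-1)}=-(a+b)$. Likewise $\rho_{t,\theta,1}$ is the inverse gamma law $\gamma(b,\theta/t^2)^{\langle-1\rangle}$, as one checks directly by comparing its density with the one displayed for $\lambda=1$.

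For the first equality I would recall the classical beta--gamma relation: if $X\sim\gamma(a,1)$ and $Y\sim\gamma(b,1)$ are independent, then $X/Y$ has the beta prime density $\frac{1}{B(a,b)}w^{a-1}(1+w)^{-a-b}$ on $(0,\infty)$; equivalently $\gamma(a,1)\circledast\gamma(b,1)^{\langle-1\rangle}$ is the beta prime distribution $\beta'(a,b)$. Applying the dilation $D_{t(\lambda-1)}$ is then a one-line change of variables, and matching the resulting density, including the normalizing constant $(t(\lambda-1))^{b}/B(a,b)$, against the stated formula for $\rho_{t,\theta,\lambda}$ yields
\begin{align*}
\rho_{t,\theta,\lambda}=D_{t(\lambda-1)}\!\left(\gamma(a,1)\circledast\gamma(b,1)^{\langle-1\rangle}\right).
\end{align*}

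The second equality I would derive from the first purely by the scaling calculus of $\circledast$, $D_c$, and $\langle-1\rangle$. Using $\gamma(b,\vartheta)=D_\vartheta(\gamma(b,1))$ together with $D_c(\mu)^{\langle-1\rangle}=D_{1/c}(\mu^{\langle-1\rangle})$ I rewrite $\rho_{t,\theta,1}=D_{t^2/\theta}(\gamma(b,1)^{\langle-1\rangle})$ and $\gamma(a,1/a)=D_{1/a}(\gamma(a,1))$; then the identity $D_c(\mu)\circledast D_d(\nu)=D_{cd}(\mu\circledast\nu)$ and the commutativity of $\circledast$ collapse $\rho_{t,\theta,1}\circledast\gamma(a,1/a)$ into $D_{(t^2/\theta)(1/a)}\!\left(\gamma(a,1)\circledast\gamma(b,1)^{\langle-1\rangle}\right)$. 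Since $(t^2/\theta)(1/a)=t(\lambda-1)$ and $1/a=\theta(\lambda-1)/t$, this is exactly the right-hand side of the first equality, which proves (1).

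For (2) I would simply set $\lambda=1+t/\theta$, so that $\lambda-1=t/\theta$ forces $a=1$ and $1/a=1$; the second gamma factor degenerates to $\gamma(1,1)$, the standard exponential law, giving $\rho_{t,\theta,1+t/\theta}=\rho_{t,\theta,1}\circledast\gamma(1,1)$. Finally, since a product $S\cdot E$ of an independent standard exponential $E$ with a positive variable $S$ is, conditionally on $S=s$, an exponential of mean $s$, the law $\rho_{t,\theta,1}\circledast\gamma(1,1)$ is a scale mixture of exponentials with mixing measure $\rho_{t,\theta,1}$, as claimed. I expect the only delicate point to be the consistent bookkeeping of the dilation, reciprocal, and normalizing factors; conceptually everything reduces to the elementary classical gamma--beta algebra, which is why no free-probabilistic machinery (S-transforms) is needed here, in contrast to the proof of Theorem~\ref{mainthm2}.
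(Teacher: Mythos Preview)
Your proposal is correct and follows essentially the same route as the paper: recognize $\rho_{t,\theta,\lambda}$ as a dilated beta prime distribution and $\rho_{t,\theta,1}$ as an inverse gamma, then invoke the classical identity $\beta'(a,b)=\gamma(a,1)\circledast\gamma(b,1)^{\langle-1\rangle}$ together with the scaling rules for $D_c$, $\circledast$, and $\langle-1\rangle$ to obtain both equalities. The specialization $\lambda=1+t/\theta$ and the mixture-of-exponentials conclusion are also handled exactly as in the paper.
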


Next, we show that for all $t, \theta > 0$ and $1 \le \lambda < 1 + t/\theta$, the measure $\mu_{t,\theta,\lambda}$ uniquely maximizes the free entropy associated with the potential $V_{t,\theta,\lambda}$. In other words, $\mu_{t,\theta,\lambda}$ serves as the equilibrium measure in this variational framework.

\begin{thm}[{\bf Free entropy associated with $V_{t,\theta,\lambda}$}, see Theorem \ref{thm:free_entropy}] 
For $t,\theta>0$ and $1\le \lambda <1+ t/\theta$, we have
$$
\mu_{t,\theta,\lambda} = \text{argmax}\{ \Sigma_{V_{t,\theta,\lambda}}(\mu): \mu \in \calP(\R_{>0})\},
$$
where $\Sigma_{V_{t,\theta,\lambda}}(\mu)$ is the (Voiculescu's) free entropy:
$$
\Sigma_{V_{t,\theta,\lambda}}(\mu)=\iint_{\R_{>0}\times \R_{>0}} \log|x-y|\mu(\rmd x)\mu(\rmd y) - \int_{\R_{>0}} V_{t,\theta,\lambda}(x) \mu(\rmd x).
$$
In particular, for $a,b>1$, the measure $f\beta'(a,b)$ is the unique maximizer of the free entropy $\Sigma_{V_{a,b}}$, where 
$$
V_{a,b}(x)=(1-a)\log x+ (a+b)\log (1+x), \qquad x>0,
$$
see also Corollary \ref{cor:freebetaprime_entropy}.
\end{thm}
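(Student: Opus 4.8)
The plan is to identify $\mu_{t,\theta,\lambda}$ as the equilibrium (weighted logarithmic potential) measure for $V_{t,\theta,\lambda}$ by verifying the Euler--Lagrange variational conditions, which for the functional $\Sigma_{V_{t,\theta,\lambda}}$ are both necessary and sufficient. Concretely, I will invoke the standard characterization from weighted potential theory (Saff--Totik, and its free-probabilistic formulations by Hiai--Petz and Ben Arous--Guionnet): a compactly supported $\mu\in\calP(\R_{>0})$ of finite logarithmic energy is the unique maximizer of $\Sigma_V$ over $\calP(\R_{>0})$ if and only if there is a constant $\ell$ with
\begin{align*}
2\int_{\R_{>0}} \log|x-y|\,\mu(\rmd y) - V(x) &= \ell, \qquad x \in \mathrm{supp}(\mu),\\
2\int_{\R_{>0}} \log|x-y|\,\mu(\rmd y) - V(x) &\le \ell, \qquad x \in \R_{>0}\setminus \mathrm{supp}(\mu).
\end{align*}
Since the logarithmic-energy term is strictly concave on $\calP(\R_{>0})$, the functional $\Sigma_{V_{t,\theta,\lambda}}$ is strictly concave and has at most one maximizer, so uniqueness is automatic and the whole problem reduces to checking these two conditions for $V=V_{t,\theta,\lambda}$.

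For the equality on the support I would first recall from Section~\ref{sec3.1} the explicit density of $\mu_{t,\theta,\lambda}$, supported on a compact interval $[b^-,b^+]\subset\R_{>0}$ with square-root vanishing at the edges, together with the identification of $\mu_{t,\theta,\lambda}$ as a shifted free Meixner law (Proposition~\ref{prop:relation_freeMeixner}). The latter provides a quadratic equation for the Cauchy transform $G_{\mu_{t,\theta,\lambda}}(z)=\int(z-y)^{-1}\mu_{t,\theta,\lambda}(\rmd y)$, so by the Sokhotski--Plemelj formula the principal-value integral on the support is precisely the rational part of the boundary value,
$$
\mathrm{p.v.}\!\int_{\R_{>0}} \frac{\mu_{t,\theta,\lambda}(\rmd y)}{x-y} = \mathrm{Re}\,G_{\mu_{t,\theta,\lambda}}(x+i0), \qquad x\in(b^-,b^+).
$$
Differentiating the equality condition then collapses it to the single pointwise identity $2\,\mathrm{Re}\,G_{\mu_{t,\theta,\lambda}}(x+i0)=V_{t,\theta,\lambda}'(x)$ on $(b^-,b^+)$. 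As $V_{t,\theta,\lambda}'$ is rational (a combination of $1/x$, $1/(x+t(\lambda-1))$, and for $\lambda=1$ an $1/x^2$ term), this becomes a routine algebraic verification once the quadratic for $G$ is written down, treating $\lambda=1$ and $\lambda>1$ separately; integrating back recovers the constant $\ell$.

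It then remains to verify the off-support inequality and to see where the constraint $1\le\lambda<1+t/\theta$ is used. Its role is to force $b^->0$ and $\mu_{t,\theta,\lambda}(\{0\})=0$: the boundary $\lambda=1+t/\theta$ is exactly the free compound Poisson case of Theorem~\ref{mainthm2}(2) where the support reaches the origin, while $\lambda<1+t/\theta$ makes the coefficient $1-\frac{t}{\theta(\lambda-1)}$ of $\log x$ negative, so that $V_{t,\theta,\lambda}(x)\to+\infty$ as $x\downarrow 0$ and $V_{t,\theta,\lambda}$ stays finite on $\mathrm{supp}(\mu)$. For the inequality I would set $U(x):=2\int\log|x-y|\,\mu(\rmd y)-V(x)-\ell$, use $U\equiv 0$ on $[b^-,b^+]$, and check $U'(x)>0$ for $0<x<b^-$ and $U'(x)<0$ for $x>b^+$ by comparing $2\,\mathrm{Re}\,G_\mu$ with $V'$ outside the support, together with $V_{t,\theta,\lambda}\to+\infty$ at both $0^+$ and $\infty$. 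For the corollary, given $a,b>1$ I would solve $a=\tfrac{t}{\theta(\lambda-1)}$, $b=1+\tfrac{t}{\theta}$ (so $\lambda=1+\tfrac{b-1}{a}\in(1,b)$), which yields $V_{t,\theta,\lambda}(cx)=V_{a,b}(x)+\mathrm{const}$ with $c=t(\lambda-1)$; since $\Sigma_{V}(D_c\mu)=\Sigma_{V(c\,\cdot)}(\mu)+\log c$ and additive constants in the potential do not move the argmax, the maximizer of $\Sigma_{V_{a,b}}$ is $D_c^{-1}\mu_{t,\theta,\lambda}$, which equals $f\beta'(a,b)$ by Theorem~\ref{mainthm2}(1) (cf.\ Corollary~\ref{cor:freebetaprime_entropy}).

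The main obstacle I anticipate is the explicit evaluation of $\mathrm{Re}\,G_{\mu_{t,\theta,\lambda}}$ with correct branch bookkeeping for the square root (under the convention $\arg\in(0,2\pi)$ fixed in the introduction), so that its rational part genuinely coincides with $\tfrac12 V_{t,\theta,\lambda}'$ uniformly across the $\lambda=1$ and $\lambda>1$ regimes. The case $\lambda=1$ is the most delicate, since its potential is not of the $f\beta'$ type; I expect it is cleanest either as the limit $\lambda\downarrow 1$ of the $\lambda>1$ computation or by a direct calculation for the inverse free-Poisson-type law. A secondary subtlety is establishing the off-support inequality rigorously near the origin, where $V_{t,\theta,\lambda}$ is singular.
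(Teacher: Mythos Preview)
Your approach is correct and genuinely different from the paper's. You plan to \emph{verify} the Euler--Lagrange conditions for the known candidate $\mu_{t,\theta,\lambda}$: compute $\mathrm{Re}\,G_{\mu_{t,\theta,\lambda}}(x+i0)$ from the explicit Cauchy transform (available via Proposition~\ref{prop:relation_freeMeixner} and \eqref{eq:Cauchy_Meixner}), match it with $\tfrac12 V_{t,\theta,\lambda}'$ on the support, and then check the off-support inequality by a sign analysis of $U'$. The paper instead goes the \emph{constructive} route: it invokes \cite{Joh98} for existence/uniqueness of the equilibrium measure $\mu_V$, assumes its support is a single interval $[a,b]$, and then uses the Saff--Totik integral equations \cite[Theorem~IV.1.11]{ST97} to solve for $a,b$ and \cite[Theorem~IV.3.1]{ST97} to compute the density $\Phi_V$ directly from $V_{t,\theta,\lambda}'$, finally checking that $\Phi_V$ coincides with the known density \eqref{eq:density}. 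For $\lambda=1$ the paper shortcuts by recognizing $V_{t,\theta,1}$ as a free GIG potential and citing \cite[Theorem~1]{Fer06}; you propose either a direct computation or the limit $\lambda\downarrow 1$, both of which work.

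What each buys: your route is shorter once the Cauchy transform is in hand, and it is more self-contained because you explicitly address the off-support inequality (the paper leaves this to the cited \cite{ST97}/\cite{Fer06} machinery, implicitly relying on convexity/single-interval hypotheses that are not spelled out). The paper's route avoids any branch-tracking for $\sqrt{\cdot}$ in $G_\mu$ and does not need Sokhotski--Plemelj, at the cost of evaluating several singular integrals against $V'$. Your derivation of Corollary~\ref{cor:freebetaprime_entropy} by scaling is essentially the same as the paper's, which uses the identification \eqref{eq:mu_freebetaprime_relation} and Proposition~\ref{prop:formula:freebetaprime_freegamma}.
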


In Section \ref{sec:Fbgalg}, we investigate algebraic properties of noncommutative random variables $G^{(p)} \sim \eta(p,1)$, which we refer to as {\it Meixner-type free beta-gamma algebras}. Consider freely independent\footnote{See e.g. \cite{NS06} and references therein for background on free independence.} noncommutative random variables  $G_1^{(p)} \sim \eta(p,1)$ and $G_2^{(q)} \sim \eta(q,1)$. Since the Meixner-type free gamma distributions satisfy the convolution identity $\eta(p,1)\boxplus \eta(q,1)= \eta(p+q,1)$, we obtain the following distributional identity:
$$
G_1^{(p)} + G_2^{(q)} \overset{\rmd}{=} G_3^{(p+q)} \sim \eta(p+q,1),
$$
where $X \overset{\rmd}{=} Y $ denotes equality in distribution. This identity highlights the additive stability of the Meixner-type free gamma distributions under free convolution. It is worth noting that the three parameters $(t, \theta, \lambda)$ of the generalized Meixner-type free gamma distributions play a crucial role in understanding the algebraic structure of the random variables $G^{(p)}$.

\begin{thm} ({\bf Meixner-type free beta-gamma algebras}, see Section \ref{sec:Fbgalg}) We denote by $G^{(p)}\sim \eta(p,1)$ a noncommutative random variable and let us set free copies $\{G_1^{(p)},G_2^{(p)},\dots\}$ from $G^{(p)}$ for each $p>0$. Then
\begin{itemize}
\setlength{\itemsep}{-\parsep}
\item  $(G_2^{(q)})^{-\frac{1}{2}} G_1^{(p)} (G_2^{(q)})^{-\frac{1}{2}} \sim \eta(p,1)\boxtimes \eta(q,1)^{\langle-1 \rangle} = D_{\frac{1+q}{q^2}}\left( \mu_{p,1,1+\frac{p}{1+q}}\right)$ for $p,q>0$.
\item For any $p>0$ and $n\in \N$,
$$
\left(\frac{1}{G_1^{(p)}}+\frac{1}{G_2^{(p)}}+\cdots+ \frac{1}{G_{2^n}^{(p)}}\right)^{-1} \overset{\rmd}{=}\left(2^n+\frac{2^n-1}{p}\right)^{-2} G^{(2^np+2^n-1)}.
$$
\item In the case of $p=\frac{1}{2(m-1)}$ for some natural number $m\ge2$, we have
$$
\left(\frac{1}{G_1^{(p)}}+\frac{1}{G_2^{(p)}}\right)^{-1} \overset{\rmd}{=} \frac{1}{4m^2} (G_1^{(2p)} + \cdots + G_m^{(2p)}).
$$
\item The law $\mu_p$ of the Meixner-type free beta random variable 
$$
B^{(p)}:=\{(G_1^{(p)})^{-1} +(G_2^{(p)})^{-1}\}^{-\frac{1}{2}} (G_1^{(p)})^{-1} \{(G_1^{(p)})^{-1} +(G_2^{(p)})^{-1}\}^{-\frac{1}{2}}
$$ 
has the following R-transform
$$
R_{\mu_p}(z)= \frac{p(z-p^3)- \sqrt{(3p+2)^2 z^2 -2p^5 z + p^8}}{2z}, \qquad z \in \left(-\frac{p^3}{2(p+1)},0\right).
$$
Moreover, $\mu_p$ is not freely infinitely divisible for any $p>0$.
\end{itemize}
\end{thm}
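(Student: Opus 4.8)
The plan is to reduce everything to two elementary $S$-transform facts. First, inverting the moment/Cauchy transform attached to $R_{\eta(p,1)}(z)=\frac p2(1-\sqrt{1-4z})$ gives the rational $S$-transform $S_{\eta(p,1)}(z)=\frac{p-z}{p^2}$. Second, combining this with the reversal rule $S_{\mu^{\langle-1\rangle}}(z)=1/S_\mu(-1-z)$, the free-Poisson value $S_{\pi_{1,\lambda}}(z)=\frac1{z+\lambda}$, and the dilation rule $S_{D_c\mu}=c^{-1}S_\mu$, I would first record the structural identity
\[ \eta(a,1)^{\langle-1\rangle}=D_{1/a^2}\big(\pi_{1,a+1}\big),\qquad a>0, \]
that is, the reciprocal of a Meixner-type free gamma law is a rescaled free Poisson law of rate $a+1$. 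This single identity drives the first three items.

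For the first item, the first equality is the standard fact that for free positive $x,y$ the law of $y^{-1/2}xy^{-1/2}$ is $\mu_x\boxtimes\mu_{y^{-1}}$; taking $x=G_1^{(p)},\,y=G_2^{(q)}$ and using commutativity of $\boxtimes$ gives $\eta(p,1)\boxtimes\eta(q,1)^{\langle-1\rangle}$. The second equality is then a one-line check: $S_{\eta(p,1)\boxtimes\eta(q,1)^{\langle-1\rangle}}(z)=\frac{p-z}{p^2}\cdot\frac{q^2}{q+1+z}$, and feeding the representation of $\mu_{p,1,1+p/(1+q)}$ from Theorem~\ref{thm:formula_free_multiplicative}(1) through $S_{D_c\mu}=c^{-1}S_\mu$ produces the identical rational function.

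For the second item the key identity does all the work and no induction is required: the $2^n$ reciprocals $(G_i^{(p)})^{-1}$ are free with common law $D_{1/p^2}\pi_{1,p+1}$, so additivity of free-Poisson rates under $\boxplus$ gives $\sum_i (G_i^{(p)})^{-1}\sim D_{1/p^2}\pi_{1,2^n(p+1)}$; inverting through $(D_c\mu)^{\langle-1\rangle}=D_{1/c}(\mu^{\langle-1\rangle})$ and the key identity read backwards yields $D_{p^2/a^2}\,\eta(a,1)$ with $a=2^n(p+1)-1=2^np+2^n-1$ and $p^2/a^2=(2^n+(2^n-1)/p)^{-2}$. The third item is the case $n=1$ fed into the reproductive property $\eta(2p,1)^{\boxplus m}=\eta(2pm,1)$: when $p=\frac1{2(m-1)}$ one has $2p+1=2pm$ and $(2p+1)/p=2m$, so the prefactor $\frac{p^2}{(2p+1)^2}$ collapses to $\frac1{4m^2}$.

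The fourth item carries the real work. Writing $(G_i^{(p)})^{-1}=p^{-2}a_i$ with $a_1,a_2$ free and $a_i\sim\pi_{1,p+1}$, the scalar $p^{-2}$ cancels in the sandwich, so $B^{(p)}\overset{\rmd}{=}(a_1+a_2)^{-1/2}a_1(a_1+a_2)^{-1/2}$ is a free beta variable built from two free-Poisson laws of rate $p+1$. I would obtain its law by proving the free Lukacs-type property that $a_1+a_2$ is free from $(a_1+a_2)^{-1/2}a_1(a_1+a_2)^{-1/2}$; this factorizes the $S$-transforms, and passing back through $\chi$ and $K=G^{-1}$ (so that $R_{\mu_p}(z)=zK(z)-1$) leaves a degree-two algebraic equation for the Cauchy transform whose solution is the stated closed form with its single square root. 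Absent the freeness, the same quadratic is reachable from the subordination functions of $a_1\boxplus a_2$ together with the first-order perturbation $a_1\mapsto(1+\varepsilon)a_1$, which produces the mixed second moment; the symmetry-forced mean $\tau(B^{(p)})=\frac12$ and the spectral edge select the admissible branch and the endpoints of the interval $(-\frac{p^3}{2(p+1)},0)$. The hardest step is precisely this Cauchy-transform computation and the branch bookkeeping. Finally, to see that $\mu_p$ is not freely infinitely divisible I would use the analytic characterization: the radicand in $R_{\mu_p}$ is a quadratic in $z$ with strictly negative discriminant, hence a conjugate pair of non-real branch points; under $z\mapsto 1/z$ the Voiculescu transform inherits a branch point inside $\C^+$, which obstructs the Pick-function extension (equivalently, the positive free Lévy–Khintchine representation) required for free infinite divisibility, for every $p>0$.
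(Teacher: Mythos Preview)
Your proof is correct and, for items one, three, and four, follows the paper's route closely: the key identity $\eta(a,1)^{\langle-1\rangle}=D_{a^{-2}}(\pi_{1,a+1})$ is exactly Lemma~\ref{lem:muinverse_formula}, and the fourth item rests on the free Lukacs property just as in the paper, which cites \cite{Szp15} for it rather than re-proving it (so you need not prove it either; your subordination alternative is unnecessary). The non-free-infinite-divisibility argument via the non-real branch points of the radicand is the paper's argument as well.

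The one genuine difference is item two. The paper argues by induction on $n$, doubling at each step and tracking the dilation factor through the recursion $\tau_p^{\boxplus 2}=D_{(2+p^{-1})^2}(\tau_{2p+1})$. Your argument is more direct and slightly stronger: once you write $(G_i^{(p)})^{-1}\sim D_{p^{-2}}(\pi_{1,p+1})$, the additivity $\pi_{1,\lambda}^{\boxplus N}=\pi_{1,N\lambda}$ gives the law of the full sum in one line, and inverting through the key identity finishes. This avoids the induction entirely and in fact yields the same conclusion for an arbitrary number $N$ of summands, not only $N=2^n$; the paper's inductive doubling is what restricts the stated result to powers of two.
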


In Section \ref{sec7}, using the method of {\it finite free probability} which is an approximation theory in free probability that has attracted attention in recent years (see \cite{Mar21,MSS22}), we demonstrate that the asymptotic behavior of the roots of some Jacobi and Bessel polynomials, as their degree becomes sufficiently large, can be understood through the generalized Meixner-type free gamma distributions. The key point that aids understanding is the use of the ``finite S-transform'' recently introduced by the second author \cite{AFPU24}. Specifically, it involves investigating the relationship between the finite S-transform of Jacobi polynomials or Bessel polynomials and the S-transform of the generalized Meixner-type free gamma distribution derived in Section \ref{sec4}.


\section{Preliminaries}

\subsection{Harmonic analysis in free probability}\label{subsec_freeprob}

In this paper, we employ the framework of free harmonic analysis as introduced by \cite{BV1993}; see also Chapter 3 in \cite{MS17}. To compare with classical probability theory, infinite divisibility is often used since there is the Bercovici-Pata bijection, a mapping that relates classical infinitely divisible distributions to their free counterparts, see \cite{BP99, BNT02, BNT06} for details.
From this perspective, we introduce the necessary tools for our analysis in the following sections.

A probability measure $\mu$ on $\R$ is called {\it freely infinitely divisible} if for any $n\in\N$ there exists a probability measure $\mu_{n}\in\calP(\R)$ such that
\begin{align*}
\mu = \underbrace{\mu_{n}\boxplus \dots \boxplus \mu_{n}}_{\text{$n$ times}},
\end{align*}
where $\boxplus$ denotes the {\it free additive convolution} which can be defined
as the distribution of sum of freely independent selfadjoint operators.
In this case, $\mu_n\in\calP(\R)$ is uniquely determined for each $n\in\N$.
The freely infinite divisible distributions
can be characterized as those admitting a L{\'e}vy-Khintchine representation in terms of R-transform which is the free analog of the cumulant transform $C_{\mu}(z) := \log (\widehat{\mu}(z))$, where $\widehat{\mu}$ is the characteristic function of $\mu\in\calP(\R)$.
This was originally established by
Bercovici and Voiculescu in \cite{BV1993} for all Borel probability measures.
To explain it, we gather analytic tools for free additive convolution $\boxplus$. In order to define
the \textit{R-transform} (or \textit{free cumulant transform}) $R_\mu$ of $\mu\in\calP(\R)$, we need to define its
{\it Cauchy-Stieltjes transform} $G_\mu$:
\begin{equation*}
G_\mu(z)=\int_{\R}\frac{1}{z-t}\,\mu(\rmd t), \qquad(z\in\C^+).
\end{equation*}
Note in particular that $\Im(G_\mu(z))<0$ for any $z$ in $\C^+$, and
hence we may consider the reciprocal Cauchy transform
$F_\mu\colon\C^{+}\to\C^{+}$ given by $F_{\mu}(z)=1/G_{\mu}(z)$.
For any $\mu\in\calP(\R)$ and any $\lambda>0$ there exist
positive numbers $\alpha,\beta$ and $M$ such that $F_{\mu}$ is univalent
on the set $\Gamma_{\alpha,\beta}:=\{z \in \C^{+} \,|\, \Im(z) >\beta,
|\Re(z)|<\alpha \Im(z)\}$ and such that
$F_{\mu}(\Gamma_{\alpha,\beta})\supset\Gamma_{\lambda,M}$.
Therefore the right inverse $F^{\langle -1 \rangle}_{\mu}$ of $F_{\mu}$ exists on
$\Gamma_{\lambda,M}$, and the R-transform (or free cumulant transform)
$ R_\mu$ is defined by
\begin{align*}
 R_{\mu}(w) =wF^{\langle -1 \rangle}_{\mu}\left(\frac{1}{w} \right)-1, \quad\text{for all $w$ such that
  $1/w \in \Gamma_{\lambda,M}$}.
\end{align*}
The free version of the L\'evy-Khintchine representation now amounts
to the statement that $\mu\in \calP(\R)$
is freely infinitely divisible if and only if there exist $a\ge 0$,
$\gamma\in\R$ and a L{\'e}vy measure\footnote{A (Borel-) measure $\nu$
  on $\R$ is called a L\'evy measure, if $\nu(\{0\})=0$ and
  $\int_{\R}\min\{1,x^2\}\,\nu(\rmd x)<\infty$.}
$\nu$ such that
\begin{align}
 R_{\mu}(w) = a w^{2}+\gamma w +
\int_{\R}\left(\frac{1}{1- w x}-1-w x \mathbf{1}_{[-1,1]}(x)\right)\nu(\rmd x),\qquad w\in\C^-.
\label{eqno1}
\end{align}
The triplet $(a,\gamma,\nu)$ is uniquely determined and referred to as
the {\it free characteristic triplet} for $\mu$, and the measure $\nu$ is referred to as the {\it  free L\'evy measure} for $\mu$. Recently free infinite divisibility has been proved for normal distributions \cite{BBLS11}, some of the boolean-stable distributions \cite{AH14}, some of the beta distributions and some of the gamma distributions, including the chi-square distribution and powers of random variables distributed as these distributions \cite{Has14, Has16} and generalized power distributions with free Poisson term \cite{MU20}.

As one of the most important subclass of freely infinitely divisible distributions, we introduce the concepts of {\it freely selfdecomposable distributions}. A probability measure $\mu$ on $\R$ is said to be freely selfdecomposable if for any $c\in (0,1)$ there exists $\mu_c \in \mathcal{P}(\R)$ such that $\mu= \mu_c \boxplus D_c(\mu)$. It is easy to see that every freely selfdecomposable distribution is freely infinitely divisible. Moreover, it is known that $\mu \in \mathcal{P}(\R)$ is freely selfdecomposable if and only if its free L\'{e}vy measure $\nu$ is formed by
$$
\nu({\rm d}x) = \frac{k(x)}{|x|} \mathbf{1}_{\R\setminus \{0\}}(x) {\rm d}x,
$$
where a function $k$ is nondecreasing on $(-\infty,0)$ and nonincreasing on $(0,\infty)$, see \cite{BNT02} for details. Examples and properties of freely selfdecomposable distributions were investigated by \cite{HST19, HT16, HU23, MS23}.

Let us consider $\mu \in \mathcal{P}(\R_{\ge0}) \setminus\{\delta_0\}$. We define the {\it S-transform} of $\mu$ by
$$
S_\mu(z) := \frac{1+z}{z} \Psi_\mu^{\langle-1\rangle}(z), \qquad z \in \Psi_\mu (i \C^+),
$$
where $\Psi_\mu$ is the moment generating function that is
$$
\Psi_\mu(z):= \int_{\R_{>0}} \frac{xz}{1-xz} \mu({\rm d}x), \qquad z\in \C\setminus \R_{\ge0},
$$
and $\Psi_\mu(i\C^+)$ is a region contained in the circle with diameter $(\mu(\{0\})-1,0)$. One can see that
\begin{align}\label{eq:Strans_dilation}
S_{D_c(\mu)}(z) = \frac{1}{c} S_\mu(z), \qquad c>0.
\end{align}
For $\mu,\nu\in \mathcal{P}(\R_{\ge0})\setminus\{\delta_0\}$, we obtain $S_{\mu\boxtimes \nu}=S_\mu S_\nu$ on the common domain in which three S-transforms are defined, where $\mu\boxtimes \nu$ is called the {\it free multiplicative convolution} which is the distribution of multiplication $\sqrt{X}Y\sqrt{X}$ of freely independent positive random variables $X\sim\mu$ and $Y\sim\nu$.
It is known that 
\begin{align}\label{eq:formula_R_S}
R_\mu(zS_\mu(z)) =z,
\end{align} 
for small enough $z$ in a neighborhood of $(\mu(\{0\})-1,0)$, see \cite{BV92, BV1993} for details.

\begin{ex}[Marchenko-Pastur distribution]\label{ex_MPlaw}
We define the function $k_{\theta,\lambda}$ as
$$
k_{\theta,\lambda}(x):= \frac{\sqrt{(a^+-x)(x-a^-)}}{2\pi \theta x} \mathbf{1}_{(a^-,a^+)}(x), 
$$
where $\theta,\lambda>0$ and $a^\pm  :=\theta(\sqrt{\lambda}\pm 1)^2$, respectively. The \textit{Marchenko-Pastur law} $\pi_{\theta,\lambda}$ with the scale parameter $\theta$ and the shape parameter $\lambda$ is defined as the probability measure given by
$$
\pi_{\theta,\lambda}({\rm d}x) := \max\{0, 1-\lambda\}\delta_0 + k_{\theta,\lambda}(x){\rm d}x.
$$
It is known that
$$
R_{\pi_{\theta,\lambda}}(z) = \frac{\theta\lambda z}{1-\theta z}, \qquad z\in \C^-,
$$
and
$$
S_{\pi_{\theta,\lambda}}(z) = \frac{1}{\theta(\lambda +z)}, \qquad z \text{ in a neighborhood of }  (-1+\pi_{\theta,\lambda}(\{0\}),0).
$$
From the form of R-transform, we notice that, for any $\theta,\lambda>0$, 
$$
\pi_{\theta,\lambda} =D_{\theta} (\pi_{1,1}^{\boxplus \lambda}).
$$
\end{ex}

According to \cite[Proposition 3.13]{HS07}, we have
\begin{align}\label{eq:S-trans_inverse}
S_{\mu^{\langle -1 \rangle}}(z) = \frac{1}{S_\mu(-z-1)}, \qquad z\in (-1,0).
\end{align}

\begin{ex}[Free positive stable law with index $1/2$]\label{ex:inverse_MP}
By \eqref{eq:S-trans_inverse}, for $\lambda\ge 1$,
\begin{align*}
S_{\pi_{\theta,\lambda}^{\langle -1\rangle}}(z)= \frac{1}{S_{\pi_{\theta,\lambda}}(-z-1)} = \theta(\lambda-1-z), \qquad z\in (-1,0).
\end{align*}
In particular, the measure $\pi_{1,1}^{\langle -1\rangle}$ has a probability density $\frac{\sqrt{4x-1}}{2\pi x^2} \mathbf{1}_{(1/4,\infty)}(x) \rmd x$ and is well known as the free positive stable law  with index $1/2$, introduced by \cite{BP99}.
\end{ex}

\subsection{Centered free Meixner distributions}
\label{sec2.2}

In this section, we introduce the three-parameter family $\{\nu_{s,a,b}: s\ge0, a \in \R, b\ge -1\} \subset \calP(\R)$ in which their Cauchy transform is given by
\begin{align}
G_{\nu_{s,a,b}}(z) &= \cfrac{1}{z-\cfrac{s}{z-a-\cfrac{s+b}{z-a-\cfrac{s+b}{\ddots}}}}\\
&= \frac{(s+2b)z+sa-s\sqrt{(z-a)^2-4(s+b)}}{2(bz^2+saz+s^2)}.  \label{eq:Cauchy_Meixner}
\end{align}
The measure $\nu_{s,a,b}$ is called the {\it centered free Meixner distribution}. According to \cite{SY01, Ans03, BB06}, $\nu_{s,a,b}$ is freely infinitely divisible whenever $b\ge 0$, and an integral representation for the R-transform of $\nu_{s, a,b}$ is given by
$$
R_{\nu_{s, a,b}}(z) =  \int_{\R} \frac{z^2}{1-zx}s \ w_{a,b}(x){\rm d}x,
$$
where 
$$
w_{a,b}(x) = \frac{1}{2\pi b}\sqrt{4b-(x-a)^2}\mathbf{1}_{[a-2\sqrt{b}, a+2\sqrt{b}]}(x)
$$ 
is the density of Wigner's semicircle law with mean $a\in \R$ and variance $b\ge0$. Note that, the above R-transform differs from \cite{SY01, Ans03, BB06} by a factor of $z$. In this case, one can see that $\nu_{s,a,b}= \nu_{1,a,b}^{\boxplus s}$.
By \cite[Equation (4)]{BB06}, the R-transform of $\nu_{s,a,b}$ admits the explicit form
\begin{align}\label{eq:R-trans_freeMeixner}
R_{\nu_{s,a,b}}(z)=\frac{2sz^2}{1-az+\sqrt{(1-az)^2-4bz^2}}, \qquad b\neq 0
\end{align}
and in the case $b=0$, it reduces to
$$
R_{\nu_{s,a,0}}(z) = \frac{sz^2}{1-az}.
$$
According to \cite[Theorem 3.2]{BB06}, the centered free Meixner law $\nu_{1,a,b}$ coincides with one of the following measures:
\begin{itemize}
\setlength{\itemsep}{-\parsep}
\item the Wigner's semicircle law if $a=b=0$;
\item the Marchenko-Pastur distribution if $b=0$ and $a\neq 0$;
\item the free Pascal (negative binomial) distribution if $b>0$ and $a^2>4b$;
\item the free gamma distribution if $b>0$ and $a^2=4b$;
\item the pure free Meixner distribution if $b>0$ and $a^2<4b$;
\item the free binomial distribution if $-\min\{\alpha, 1-\alpha\} \le b <0$, where $\alpha=\int_{\R} x^2\ \nu_{1,a,b}(\rmd x)$.
\end{itemize}

By \eqref{eq:R_eta(t,theta)} and \eqref{eq:R-trans_freeMeixner}, the Meixner-type free gamma distribution $\eta(t,\theta)$ can be expressed in terms of the centered free Meixner law (the free gamma distribution in the sense described above) as
$$
\eta(t,\theta) = \nu_{t\theta^2, 2\theta, \theta^2} \boxplus \delta_{t\theta}, \qquad t,\theta>0.
$$
More generally, we show that the generalized Meixner-type free gamma distribution can be represented as a centered free Meixner law under a shift (see Proposition \ref{prop:relation_freeMeixner}).

\subsection{Entropy functionals with potentials}\label{subsec_potential}

Assume that $V$ is a $C^1$-potential function $V$  satisfying 
$$
V(x) \ge (1+\delta)\log (x^2+1), \qquad x\in \R, \qquad \text{for some } \delta>0,
$$
and $\mathcal{Z} := \int e^{-V(x)}{\rm d}x<\infty$. 

By the Lagrangian multiplier method, it is known that the Gibbs distribution $\frac{1}{\mathcal{Z}} \exp\{-V(x)\}$
is a unique probability density which maximizes the Shannon entropy associated with the potential function $V$:
$$
H_V(p):= -\int p(x)\log p(x){\rm d}x - \int V(x) p(x){\rm d}x,
$$
among all probability density functions $p$ on $\R$.

According to \cite{Joh98}, it is known that for the above potential function $V$, the {\it free entropy functional} (see \cite{Voi93}):
$$
\Sigma_V(\mu) =  \iint \log |x-y| \mu ({\rm d}x) \mu ({\rm d}y) - \int V(x) \mu({\rm d}x),
$$
among all probability measures $\mu$ on $\R$, is known to be finite and have a unique maximizer $\mu_V$ (namely, the {\it equilibrium measure} of $\Sigma_V$). The support of $\mu_V$ is compact. Moreover, $\mu_V$ satisfies the following equation:
$$
\mathcal{H} \mu_V(x) = \frac{1}{2} V'(x), \qquad x\in \text{supp}(\mu_V),
$$
where $\mathcal{H}\mu$ is the {\it Hilbert transform} of a probability measure $\mu$ on $\R$, that is, 
$$
\mathcal{H}\mu (x) :=\text{p.v.} \int \frac{1}{x-y} \mu({\rm d} y)  = \lim_{\varepsilon \to 0} \left(\int_{-\infty}^{x-\varepsilon} + \int_{x+\varepsilon}^\infty\right)\frac{1}{x-y} \mu({\rm d} y), \quad x\in \R,
$$
see also \cite[Page 27, Theorem 1.3]{ST97} and \cite[(3.4)]{Bia03}.

In connection with the above discussion, Hasebe and Szpojankowski \cite{HS19} pointed out a correspondence between the measure that maximizes the Shannon entropy and the equilibrium measure of the free entropy, from the perspective of maximizing entropy functionals with a potential. We call it the {\it potential correspondence}\footnote{To the best of the author's knowledge, the exact name for this correspondence is unknown, and little is known about the mathematical facts.} in this paper. In \cite{HS19}, it was observed that the potential correspondence maps the classical generalized inverse Gaussian (GIG) distributions to the free GIG distributions introduced in \cite{Fer06}. Moreover, this correspondence maps the normal distributions $N(\mu,\sigma^2)$ to Wigner's semicircle laws $w_{\mu,\sigma^2}(x)\rmd x$ for $\mu\in \R$ and $\sigma>0$, and the gamma distribution $\gamma(\lambda,\theta)$ to the Marchenko-Pastur distributions $\pi_{\theta,\lambda}$ for $\theta>0$ and $\lambda\ge 1$.


\section{Generalized Meixner-type free gamma distributions}
\label{sec3}

Recall the definition of generalized Meixner-type free gamma distributions.
\begin{defn}\label{def:GFMG} Consider $t,\theta>0$ and $\lambda \ge 1$. The {\it generalized Meixner-type free gamma distribution} $\mu_{t,\theta,\lambda}$ is the probability measure whose R-transform is given by
$$
R_{\mu_{t,\theta,\lambda}}(z) = \int_{\R}\left(\frac{1}{1-zx}-1\right) \frac{t k_{\theta,\lambda}(x)}{x}{\rm d}x, \quad z\in \C^-.
$$
Here, $k_{\theta,\lambda}(x)$ denotes the density of the Marchenko-Pastur distribution, explicitly given by
$$
k_{\theta,\lambda}(x):= \frac{\sqrt{(a^+-x)(x-a^-)}}{2\pi \theta x} \mathbf{1}_{(a^-,a^+)}(x),
$$
with $a^\pm  :=\theta(\sqrt{\lambda}\pm 1)^2$. 
\end{defn}

\subsection{Relation with centered free Meixner distributions}

We establish an important connection between the centered free Meixner distributions $\nu_{s,a,b}$ (defined in Section~\ref{sec2.2}) and the generalized Meixner-type free gamma distributions $\mu_{t,\theta,\lambda}$.

\begin{prop}\label{prop:relation_freeMeixner}
For $t,\theta>0$ and $\lambda \ge1$, we have
\begin{align*}
\mu_{t,\theta,\lambda} = \nu_{t\theta\lambda, \theta(\lambda+1), \theta^2\lambda} \boxplus \delta_t. 
\end{align*}
\end{prop}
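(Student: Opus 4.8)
The plan is to verify the claimed identity at the level of R-transforms. Since the free additive convolution linearizes via $R_{\mu\boxplus\nu}=R_\mu+R_\nu$, and since in the paper's normalization $R_{\delta_t}(z)=tz$ (as $F_{\delta_t}(z)=z-t$ gives $F_{\delta_t}^{\langle-1\rangle}(1/z)=1/z+t$), it suffices to prove
\[
R_{\mu_{t,\theta,\lambda}}(z)=R_{\nu_{s,a,b}}(z)+tz,\qquad (s,a,b)=(t\theta\lambda,\ \theta(\lambda+1),\ \theta^2\lambda),
\]
for $z\in\C^-$; equality of R-transforms on this domain forces equality of the (compactly supported) measures.

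First I would compare the two absolutely continuous densities. A direct computation shows that the semicircle law $w_{a,b}$ with mean $a=\theta(\lambda+1)$ and variance $b=\theta^2\lambda$ is supported exactly on $[a^-,a^+]$ with $a^\pm=\theta(\sqrt\lambda\pm1)^2$, the same interval on which $k_{\theta,\lambda}$ lives, and moreover that $w_{a,b}(x)=\frac{x}{\theta\lambda}\,k_{\theta,\lambda}(x)$, equivalently $t\,k_{\theta,\lambda}(x)=\frac{s}{x}\,w_{a,b}(x)$ with $s=t\theta\lambda$. Using $\frac{1}{1-zx}-1=\frac{zx}{1-zx}$ in Definition~\ref{def:GFMG} and substituting this density relation, I would rewrite
\[
R_{\mu_{t,\theta,\lambda}}(z)=\int_\R\frac{z}{1-zx}\,t\,k_{\theta,\lambda}(x)\,{\rm d}x=s\int_\R\frac{z}{x(1-zx)}\,w_{a,b}(x)\,{\rm d}x.
\]

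The key algebraic step is the partial fraction decomposition $\frac{z}{x(1-zx)}=\frac{z}{x}+\frac{z^2}{1-zx}$. The second term reproduces exactly the integral representation of $R_{\nu_{s,a,b}}$ recalled in Section~\ref{sec2.2}, while the first term contributes $sz\int_\R\frac{w_{a,b}(x)}{x}\,{\rm d}x=-sz\,G_{w_{a,b}}(0)$. Evaluating the semicircle Cauchy transform at $0$ from the branch analytically continued from infinity gives $G_{w_{a,b}}(0)=-\frac{1}{\theta\lambda}$ (using $a^2-4b=\theta^2(\lambda-1)^2$), so that $-sz\,G_{w_{a,b}}(0)=\frac{s}{\theta\lambda}z=tz$. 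Combining the two terms yields $R_{\mu_{t,\theta,\lambda}}(z)=R_{\nu_{s,a,b}}(z)+tz$, as desired.

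The main obstacle is the boundary case $\lambda=1$, where $a^-=0$ and $0$ lies on the edge of $\mathrm{supp}(w_{a,b})$: there $\int w_{a,b}(x)/x\,{\rm d}x$ is improper, but since $w_{a,b}(x)$ vanishes like $\sqrt{x}$ near $0$ the integrand is of order $x^{-1/2}$ and hence integrable, and $G_{w_{a,b}}(0)=-a/(2b)=-1/\theta=-1/(\theta\lambda)$ as a boundary value, so the formula persists. I would also take care to fix the branch of the square root (the root that is negative for real arguments to the left of the support) in order to get the correct sign of $G_{w_{a,b}}(0)$. As an independent cross-check I would compute both sides in closed form: rationalizing the explicit formula \eqref{eq:R-trans_freeMeixner} for $R_{\nu_{s,a,b}}$ against the conjugate of $1-\theta(\lambda+1)z+\sqrt{(1-a^+z)(1-a^-z)}$ collapses it to $\frac{t}{2\theta}\bigl(1-\theta(\lambda+1)z-\sqrt{(1-a^+z)(1-a^-z)}\bigr)$, and adding $tz$ gives $\frac{t}{2\theta}\bigl(1-\theta(\lambda-1)z-\sqrt{(1-a^+z)(1-a^-z)}\bigr)$, which matches $t\,G_{\pi_{\theta,\lambda}}(1/z)=R_{\mu_{t,\theta,\lambda}}(z)$ obtained from the known Cauchy transform of the Marchenko--Pastur law.
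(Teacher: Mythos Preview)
Your proof is correct and follows essentially the same route as the paper: both use the density identity $\theta\lambda\,w_{\theta(\lambda+1),\theta^2\lambda}(x)=x\,k_{\theta,\lambda}(x)$ together with the partial-fraction/algebraic identity linking $\frac{z}{1-zx}$, $\frac{z^2}{1-zx}$ and the constant term, and then match R-transforms. The only cosmetic difference is that the paper extracts the $tz$ term via $\int k_{\theta,\lambda}(x)\,\rmd x=1$ (immediate for $\lambda\ge1$), which sidesteps your detour through $G_{w_{a,b}}(0)$ and the attendant branch/boundary discussion.
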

\begin{proof}
A direct computation shows that
\begin{align*}
R_{\nu_{t\theta\lambda, \theta(\lambda+1), \theta^2\lambda} \boxplus \delta_t}(z) 
&= R_{{\nu_{t\theta\lambda, \theta(\lambda+1), \theta^2\lambda}} } (z) + tz\\
&=\int_{\R} \frac{z^2}{1-zx} \cdot t\theta\lambda \cdot w_{\theta(\lambda+1),\theta^2\lambda}(x){\rm d}x + tz\\
&=\int_{\R} \frac{xz^2}{1-zx} \cdot t k_{\theta,\lambda}(x){\rm d}x+tz\\
&=tz \int_{\R} \frac{1}{1-zx} k_{\theta,\lambda}(x){\rm d}x\\
&=\int_{\R} \left(\frac{1}{1-zx}-1\right) \frac{tk_{\theta,\lambda}(x)}{x} {\rm d}x = R_{\mu_{t,\theta,\lambda}}(z),
\end{align*}
as desired.
\end{proof}

\begin{rem}
According to Proposition \ref{prop:relation_freeMeixner}, the generalized Meixner-type free gamma distribution $\mu_{t,\theta,\lambda}$ coincides, up to a shift, with the centered free Meixner distribution $\nu_{s,a,b}$ when the parameters satisfy
$$
s=t\theta\lambda, \quad a=\theta(\lambda+1), \quad \text{and} \quad b=\theta^2\lambda.
$$
In this setting, since $a^2\ge 4b$, the measure $\nu_{s,a,b}$ is either the free Pascal distribution (when $a^2>4b$) or the free gamma distribution (when $a^2=4b$), see \cite[Page 65]{BB06}. Consequently, the measure $\mu_{t,\theta,\lambda}$ can be interpreted as a suitably shifted version of either the free Pascal or the free gamma distribution.
\end{rem}

It follows from \eqref{eq:R-trans_freeMeixner} and Proposition \ref{prop:relation_freeMeixner} that
\begin{align}\label{eq:R-trans_theta}
R_{\mu_{t,\theta,\lambda}}(z) =t \cdot \frac{1+\theta(1-\lambda)z -\sqrt{(1+\theta(1-\lambda)z)^2-4\theta z}}{2\theta}.
\end{align}
Due to the above representation of R-transform, we can understand the second parameter $\theta$ for the measure $\mu_{t,\theta,\lambda}$. Since
\begin{align*}
R_{\mu_{t,\theta,\lambda}}(z) 
= \frac{1}{\theta} R_{\mu_{t,1,\lambda}}(\theta z)
=\frac{1}{\theta} R_{D_\theta(\mu_{t,1,\lambda})}(z) = R_{D_\theta(\mu_{t,1,\lambda})^{\boxplus \frac{1}{\theta}}}(z),
\end{align*}
we get 
$$
\mu_{t,\theta,\lambda}= D_\theta(\mu_{t,1,\lambda})^{\boxplus \frac{1}{\theta}}.
$$
We will explain the meaning of the third parameter $\lambda$ in Theorem \ref{thm:formula_free_multiplicative}.

\subsection{Density, atom and moments}
\label{sec3.1}

In this section, we investigate the density, atom and moments of $\mu_{t,\theta,\lambda}$. Thanks to \eqref{eq:Cauchy_Meixner} and  Proposition \ref{prop:relation_freeMeixner}, it is straightforward to see that
$$
G_{\mu_{t,\theta,\lambda}}(z)=\frac{(t+2\theta)z-t(t-\theta(\lambda-1)) -t \sqrt{(z-\alpha^-)(z-\alpha^+)}}{2\theta z (z+t(\lambda-1))}, \qquad z\in \C^+,
$$
where
\begin{align}\label{eq:alpha^pm}
\alpha^\pm:= \theta(\lambda+1)+t \pm 2\sqrt{\theta\lambda(\theta+t)}.
\end{align}
The Stieltjes-inversion formula (see \cite[Theorem F.6]{Sch12}) implies that
\begin{align}\label{eq:density}
\frac{{\rm d}\mu_{t,\theta,\lambda}}{{\rm d}x}(x) = \frac{t\sqrt{(x-\alpha^-)(\alpha^+-x)}}{2\pi \theta x (x +t(\lambda-1))} \mathbf{1}_{[\alpha^-,\alpha^+]}(x).
\end{align}
Since 
\begin{align*}
\lim_{z\to 0} z G_{\mu_{t,\theta,\lambda}}(z) 
= \frac{-t(t-\theta(\lambda-1)) + t|t-\theta(\lambda-1)|}{2t\theta(\lambda-1)},
\end{align*}
we have
\begin{align}\label{eq:atom}
\mu_{t,\theta,\lambda}(\{0\}) = \begin{cases}
0, & 1 \le \lambda \le 1+t/\theta\\
\vspace{-3mm}\\
1-\dfrac{t}{\theta(\lambda-1)}, & \lambda > 1+t/\theta.
\end{cases}
\end{align}
In particular, $\mu_{t,\theta,\lambda}$ has no singular continuous part since it is freely infinitely divisible (see \cite[Theorem 3.4]{BB04}). We summarize the above result as follows.
\begin{prop}[Density and atom]
For $t,\theta>0$ and $\lambda \ge1$, we get
$$
\mu_{t,\theta,\lambda}(\rmd x) = \max\left\{0, 1-\dfrac{t}{\theta(\lambda-1)}\right\} \delta_0 (\rmd x) + \frac{t\sqrt{(x-\alpha^-)(\alpha^+-x)}}{2\pi \theta x (x +t(\lambda-1))} \mathbf{1}_{[\alpha^-,\alpha^+]}(x)\rmd x.
$$
\end{prop}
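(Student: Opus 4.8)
The plan is to read off both the absolutely continuous part and the atom directly from the Cauchy transform $G_{\mu_{t,\theta,\lambda}}$, which in turn is supplied by the free Meixner representation in Proposition~\ref{prop:relation_freeMeixner}. First I would compute $G_{\mu_{t,\theta,\lambda}}$ explicitly. Since $\mu_{t,\theta,\lambda} = \nu_{t\theta\lambda,\theta(\lambda+1),\theta^2\lambda} \boxplus \delta_t$ and convolving with $\delta_t$ merely translates the measure, we have $G_{\mu_{t,\theta,\lambda}}(z) = G_{\nu_{s,a,b}}(z-t)$ with $(s,a,b)=(t\theta\lambda,\theta(\lambda+1),\theta^2\lambda)$. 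Substituting these values into the closed form \eqref{eq:Cauchy_Meixner} and simplifying yields the displayed expression for $G_{\mu_{t,\theta,\lambda}}(z)$. The key algebraic checks are that the denominator $2(b(z-t)^2+sa(z-t)+s^2)$ factors as $2\theta z(z+t(\lambda-1))$ (after cancelling the common factor $\theta\lambda$ with the numerator), and that the radical argument $(z-t-a)^2-4(s+b)$ factors as $(z-\alpha^-)(z-\alpha^+)$ with $\alpha^\pm$ as in \eqref{eq:alpha^pm}; the latter is a direct comparison of the discriminant with $4(s+b)=4\theta\lambda(\theta+t)$.

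Next I would apply the Stieltjes inversion formula (see \cite[Theorem F.6]{Sch12}) to recover the absolutely continuous part via $\frac{\rmd\mu_{t,\theta,\lambda}}{\rmd x}(x) = -\frac{1}{\pi}\lim_{\varepsilon\to 0^+}\Im G_{\mu_{t,\theta,\lambda}}(x+i\varepsilon)$. The only term contributing a nonzero imaginary boundary value as $\varepsilon\to 0^+$ is the radical $\sqrt{(z-\alpha^-)(z-\alpha^+)}$, which becomes purely imaginary precisely on the interval $[\alpha^-,\alpha^+]$; extracting this boundary value and keeping track of the sign gives the stated density $\frac{t\sqrt{(x-\alpha^-)(\alpha^+-x)}}{2\pi\theta x(x+t(\lambda-1))}$ supported on $[\alpha^-,\alpha^+]$. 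To detect the atom at the origin, I would evaluate $\mu_{t,\theta,\lambda}(\{0\}) = \lim_{z\to 0} z\,G_{\mu_{t,\theta,\lambda}}(z)$, since $z=0$ is the candidate pole coming from the denominator factor $z$. Carrying out this limit produces $\frac{-t(t-\theta(\lambda-1)) + t|t-\theta(\lambda-1)|}{2t\theta(\lambda-1)}$, and distinguishing the sign of $t-\theta(\lambda-1)$ shows this equals $\max\{0,\,1-\frac{t}{\theta(\lambda-1)}\}$, which is $0$ exactly when $1\le\lambda\le 1+t/\theta$ and positive otherwise. Finally, since $\mu_{t,\theta,\lambda}$ is freely infinitely divisible, it has no singular continuous part by \cite[Theorem 3.4]{BB04}, so the atom at $0$ together with the absolutely continuous density account for the entire measure.

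The main obstacle is the algebraic simplification in the first step: one must carefully combine the two denominator factors $z$ and $z+t(\lambda-1)$ arising from $b(z-t)^2+sa(z-t)+s^2$, cancel the shared factor with the numerator, and verify that the branch of the square root is chosen consistently so that $G_{\mu_{t,\theta,\lambda}}$ maps $\C^+$ into $\C^-$ and decays like $1/z$ at infinity. The atom computation requires parallel care with the absolute value $|t-\theta(\lambda-1)|$, which is exactly the feature that produces the threshold $\lambda = 1+t/\theta$ separating the atomless regime from the atomic one; everything else is routine once $G_{\mu_{t,\theta,\lambda}}$ is in the stated form.
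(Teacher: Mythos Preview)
Your proposal is correct and follows essentially the same approach as the paper: derive $G_{\mu_{t,\theta,\lambda}}$ from \eqref{eq:Cauchy_Meixner} via Proposition~\ref{prop:relation_freeMeixner} and the shift by $\delta_t$, then apply Stieltjes inversion for the density, compute $\lim_{z\to 0} zG_{\mu_{t,\theta,\lambda}}(z)$ for the atom, and invoke \cite[Theorem~3.4]{BB04} to rule out a singular continuous part. Your write-up is in fact more detailed than the paper's, which simply asserts the formula for $G_{\mu_{t,\theta,\lambda}}$ as ``straightforward'' and then proceeds exactly as you do.
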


Next, we compute the moments of $\mu_{t,\theta,\lambda}$:
$$
m_n(\mu_{t,\theta,\lambda})= \int_\R x^n \mu_{t,\theta,\lambda}({\rm d}x), \qquad n\ge 1.
$$
To obtain $m_n(\mu_{t,\theta,\lambda})$, we first compute its $n$-th {\it free cumulant} $\kappa_n(\mu_{t,\theta,\lambda})$, which is defined as the coefficient of $z^n$ in the power series expansion of the R-transform $R_{\mu_{t,\theta,\lambda}}(z)$. By the computation in the proof of Proposition \ref{prop:relation_freeMeixner}, we have
\begin{align*}
R_{\mu_{t,\theta,\lambda}}(z) 
&=tz \int_{\R} \frac{1}{1-zx} k_{\theta,\lambda}(x)\rmd x =  \sum_{n=0}^\infty tm_n(\pi_{\theta,\lambda})z^{n+1},
\end{align*}
where $m_0(\pi_{\theta,\lambda})=1$. Comparing the coefficients of $z^n$ then yields the following result:
\begin{align}
\kappa_1(\mu_{t,\theta,\lambda}) &= t; \label{eq:free_cumulant}\\
\kappa_{n+1}(\mu_{t,\theta,\lambda}) &= t m_n(\pi_{\theta,\lambda}) = \frac{t\theta^{n}}{n} \sum_{k=0}^{n-1} \binom{n}{k}\binom{n}{k+1} \lambda^k, \qquad n\ge1. \label{eq:free_cumulant2}
\end{align}

\begin{prop}[Moments]\label{prop:moments_GFMG}
Consider $t,\theta>0$ and $\lambda\ge 1$. Then $m_1(\mu_{t,\theta,\lambda})=t$ and for $n\ge 2$,
\begin{align*}
m_n(\mu_{t,\theta,\lambda})=\sum_{m=1}^n \sum_{\substack{r_1,\dots, r_n\ge 0\\ r_1+\cdots+ r_n=m \\ r_1+ 2 r_2 + \cdots + nr_n=n}}P_m^{(n)}(r_1,\dots, r_n)t^m \theta^{n-m} \prod_{s=1}^{n-1} \left( \frac{1}{s}\sum_{k=0}^{s-1} \binom{s}{k}\binom{s}{k+1}\lambda^{k}\right)^{r_{s+1}},
\end{align*}
where
$$
P_m^{(n)}(r_1,\dots, r_n):=\frac{n!}{r_1!r_2!\cdots r_n! ( n - m + 1)!}
$$
\end{prop}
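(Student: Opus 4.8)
The plan is to invoke the free moment--cumulant formula and then reorganize the resulting sum over non-crossing partitions according to block-size type. Recall that for any compactly supported measure with free cumulants $(\kappa_j)_{j\ge 1}$ one has $m_n=\sum_{\pi\in \mathrm{NC}(n)}\prod_{V\in\pi}\kappa_{|V|}$, where $\mathrm{NC}(n)$ is the lattice of non-crossing partitions of $\{1,\dots,n\}$ (see \cite{NS06}). Since the product $\prod_{V\in\pi}\kappa_{|V|}$ depends only on the multiset of block sizes of $\pi$, I would split $\mathrm{NC}(n)$ into classes indexed by the \emph{type} $(r_1,\dots,r_n)$, where $r_j$ counts the blocks of cardinality $j$; each admissible type satisfies $r_1+\cdots+r_n=m$ (the number of blocks) and $r_1+2r_2+\cdots+nr_n=n$. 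On each such class the product equals $\prod_{j=1}^{n}\kappa_j^{\,r_j}$, and $m$ ranges from $1$ (a single block) to $n$ (all singletons).

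The combinatorial core is Kreweras's enumeration: the number of non-crossing partitions of $\{1,\dots,n\}$ of prescribed type $(r_1,\dots,r_n)$ with $m=\sum_j r_j$ blocks is exactly $\frac{n!}{(n-m+1)!\,r_1!\cdots r_n!}=P_m^{(n)}(r_1,\dots,r_n)$. Granting this, the moment--cumulant formula collapses to
\[
m_n(\mu_{t,\theta,\lambda})=\sum_{m=1}^{n}\ \sum_{\substack{r_1+\cdots+r_n=m\\ r_1+2r_2+\cdots+nr_n=n}}P_m^{(n)}(r_1,\dots,r_n)\,\prod_{j=1}^{n}\kappa_j(\mu_{t,\theta,\lambda})^{\,r_j}.
\]

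It then remains to substitute the explicit cumulants of \eqref{eq:free_cumulant}--\eqref{eq:free_cumulant2}, namely $\kappa_1=t$ and $\kappa_{s+1}=\frac{t\theta^{s}}{s}\sum_{k=0}^{s-1}\binom{s}{k}\binom{s}{k+1}\lambda^{k}$ for $s\ge1$, and to track the powers of $t$ and $\theta$. Each factor $\kappa_j$ carries one power of $t$, so $\prod_j\kappa_j^{r_j}$ contributes $t^{\sum_j r_j}=t^{m}$; each $\kappa_j$ with $j\ge2$ additionally carries $\theta^{\,j-1}$, yielding $\theta^{\sum_j (j-1)r_j}=\theta^{\,n-m}$. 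Re-indexing the remaining $\lambda$-dependent factors by $s=j-1$ converts $\prod_{j=2}^{n}(\cdots)^{r_j}$ into $\prod_{s=1}^{n-1}\bigl(\frac{1}{s}\sum_{k=0}^{s-1}\binom{s}{k}\binom{s}{k+1}\lambda^{k}\bigr)^{r_{s+1}}$, which is precisely the claimed expression. The base case $m_1=\kappa_1=t$ is immediate because $\mathrm{NC}(1)$ is a singleton.

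The only genuine obstacle is the combinatorial identity counting non-crossing partitions of a fixed type; this is Kreweras's classical formula, which I would either cite directly from \cite{NS06} or reprove by the standard recursive/bijective argument. Everything else is bookkeeping of the exponents of $t$ and $\theta$ together with the single index shift $s=j-1$, so once the enumeration is in place the proof is routine.
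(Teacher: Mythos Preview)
Your proposal is correct and follows essentially the same route as the paper: apply the free moment--cumulant formula, group $\mathrm{NC}(n)$ by block-size type, invoke Kreweras's count $P_m^{(n)}(r_1,\dots,r_n)$ (which the paper also cites from \cite{NS06}), and substitute the explicit cumulants \eqref{eq:free_cumulant}--\eqref{eq:free_cumulant2}. If anything, your write-up is slightly more explicit about the bookkeeping of the $t$- and $\theta$-exponents than the paper's own proof.
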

\begin{proof}
One can observe that $m_1(\mu_{t,\theta,\lambda})=\kappa_1(\mu_{t,\theta,\lambda})=t$ by \eqref{eq:free_cumulant}. Let us consider $n\ge 2$. It is known that the number of non-crossing partitions with $r_1$ blocks of size $1$, $r_2$ blocks of size $2$, $\dots$, $r_n$ blocks of size $n$ equals $P_m^{(n)}(r_1,\dots, r_n)$, where $r_1+r_2+\cdots + r_n=m$. By the moment-cumulant formula (cf. \cite[Proposition 11.4]{NS06}) and \eqref{eq:free_cumulant2}, for $n\ge 2$, we obtain
\begin{align*}
m_n(\mu_{t,\theta,\lambda}) 
&= \sum_{\pi \in \mathcal{NC}(n)} \prod_{V\in \pi} \kappa_{|V|}(\mu_{t,\theta,\lambda})\\
&=\sum_{m=1}^n\sum_{\substack{r_1,\dots, r_n\ge 0\\ r_1+\cdots+ r_n=m \\ r_1+ 2 r_2 + \cdots + nr_n=n}}P_m^{(n)}(r_1,\dots, r_n)\kappa_1(\mu_{t,\theta,\lambda})^{r_1}\kappa_2(\mu_{t,\theta,\lambda})^{r_2}\cdots \kappa_n(\mu_{t,\theta,\lambda})^{r_n}\\
&=\sum_{m=1}^n \sum_{\substack{r_1,\dots, r_n\ge 0\\ r_1+\cdots+ r_n=m \\ r_1+ 2 r_2 + \cdots + nr_n=n}}P_m^{(n)}(r_1,\dots, r_n) t^m \theta^{n-m} \prod_{s=1}^{n-1} \left( \frac{1}{s}\sum_{k=0}^{s-1} \binom{s}{k}\binom{s}{k+1}\lambda^{k}\right)^{r_{s+1}}.
\end{align*}
\end{proof}

\begin{ex}\label{ex:moment_GFMG}
Consider $t, \theta>0$ and $\lambda\ge1$. Let us set $m_n:=m_n(\mu_{t,\theta,\lambda})$ for short. By Theorem \ref{prop:moments_GFMG}, we raise the first four moments.
\begin{itemize}
\setlength{\itemsep}{-\parsep}
\item $m_1=t$, 
\item $m_2=t^2+\theta t$,
\item $m_3 =t^3+3\theta t^2 + \theta^2(1+\lambda) t$,
\item $m_4=t^4 + 6\theta t^3 + 2\theta^2(3+2\lambda) t^2 + \theta^3(1+3\lambda+\lambda^2) t$.
\end{itemize}
\end{ex}
In Section \ref{sec4.2}, we will notice that the measure $\mu_{t,\theta,\lambda}$ coincides with a certain scaled free beta prime distribution introduced by \cite{Yos20} for $t,\theta>0$ and $\lambda>1$. According to \cite[Theorem 6.1]{Yos20}, another combinatorial representation of $m_n(\mu_{t,\theta,\lambda})$ will be obtained. See Corollary \ref{cor:moment_another} later.


\subsection{Free selfdecomposability and unimodality}
\label{sec3.3}
One can easily see free selfdecomposability for the measure $\mu_{t,\theta,\lambda}$.

\begin{prop}[Free selfdecomposability]\label{prop_FSD}
Let us consider $t,\theta>0$ and $\lambda\ge1$. The measure $\mu_{t,\theta,\lambda}$ is freely selfdecomposable if and only if $\lambda=1$.
\end{prop}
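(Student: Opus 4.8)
The plan is to read the free L\'evy measure of $\mu_{t,\theta,\lambda}$ off the integral representation of its R-transform in Definition~\ref{def:GFMG}, and then apply the density characterization of free selfdecomposability recalled in Section~\ref{subsec_freeprob}. Since $\mu_{t,\theta,\lambda}$ is freely infinitely divisible (it is a shift of a centered free Meixner law $\nu_{s,a,b}$ with $b=\theta^2\lambda\ge0$ by Proposition~\ref{prop:relation_freeMeixner}, and shifting by $\delta_t$ preserves free infinite divisibility), it admits the free L\'evy-Khintchine representation \eqref{eqno1}.

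First I would identify the free L\'evy measure. Because $k_{\theta,\lambda}$ is supported on the compact interval $(a^-,a^+)\subset(0,\infty)$, absorbing the truncation term $-zx\mathbf 1_{[-1,1]}(x)$ into the drift shows that the representation in Definition~\ref{def:GFMG} is exactly \eqref{eqno1} with vanishing semicircular part and free L\'evy measure
\[
\nu(\rmd x)=\frac{t\,k_{\theta,\lambda}(x)}{x}\,\rmd x .
\]
One should check this is a genuine L\'evy measure: away from $0$ this is clear by compact support and boundedness, while near $0$ (relevant only when $\lambda=1$, since $a^-=0$ there) the integrand behaves like $x^{-3/2}$, so $\int_0 x^2\,\nu(\rmd x)<\infty$.

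Next, writing $\nu(\rmd x)=\frac{k(x)}{|x|}\,\rmd x$ forces $k\equiv0$ on $(-\infty,0)$ and, for $x>0$,
\[
k(x)=t\,k_{\theta,\lambda}(x)=\frac{t\sqrt{(a^+-x)(x-a^-)}}{2\pi\theta x}\,\mathbf 1_{(a^-,a^+)}(x).
\]
By the characterization in Section~\ref{subsec_freeprob}, $\mu_{t,\theta,\lambda}$ is freely selfdecomposable if and only if $k$ is nonincreasing on $(0,\infty)$ (the condition on $(-\infty,0)$ is vacuous as $k\equiv0$ there). The decisive point is the behaviour at the left endpoint: $k\equiv0$ on $(0,a^-]$ but $k>0$ just to the right of $a^-$, so $k$ increases across $a^-$ whenever $a^->0$. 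Since $a^-=\theta(\sqrt\lambda-1)^2$ is positive exactly when $\lambda>1$, this rules out free selfdecomposability for all $\lambda>1$.

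For sufficiency when $\lambda=1$, we have $a^-=0$ and $a^+=4\theta$, hence $k(x)=\frac{t}{2\pi\theta}\sqrt{(4\theta-x)/x}$ on $(0,4\theta)$ and $k\equiv0$ beyond. As $(4\theta-x)/x=4\theta/x-1$ is strictly decreasing on $(0,4\theta)$, so is $k$, giving a nonincreasing $k$ on all of $(0,\infty)$; thus $\mu_{t,\theta,1}$ is freely selfdecomposable, and both directions follow. I do not anticipate a serious obstacle: the only care needed is the bookkeeping that identifies $\frac{t\,k_{\theta,\lambda}(x)}{x}$ as the free L\'evy measure, and the recognition that the obstruction for $\lambda>1$ lives at the jump of $k$ across the left endpoint $a^-$, not in the interior of the support.
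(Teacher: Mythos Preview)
Your proposal is correct and follows essentially the same approach as the paper: identify the free L\'evy measure as $\frac{t\,k_{\theta,\lambda}(x)}{x}\,\rmd x$, then apply the monotonicity criterion for $k(x)=t\,k_{\theta,\lambda}(x)$, noting that for $\lambda>1$ the support $(a^-,a^+)$ is bounded away from $0$ so $k$ cannot be nonincreasing, while for $\lambda=1$ the function $k$ is nonincreasing on $(0,\infty)$. Your write-up is somewhat more detailed than the paper's (you explicitly compute $k$ at $\lambda=1$ and verify it decreases, and check the L\'evy measure condition near $0$), but the argument is the same.
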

\begin{proof}
If $\lambda =1$, then the function $tk_{\theta,\lambda} (x)$ is non-increasing on $(0,\infty)$, and therefore the measure $\mu_{t,\theta,1}$ is freely selfdecomposable for all $t,\theta>0$. For $\lambda>1$, the function $k_{\theta,\lambda}(x)$ is supported on $(a^-,a^+)$ and $a^->0$. Hence $\mu_{t,\theta,\lambda}$ is not freely selfdecomposable for any $t>0$ and $\theta>0$.
\end{proof}

A probability measure $\mu$ on $\R$ is said to be {\it unimodal} if there exist $a\in \R$ and a density function $f$ which is nondecreasing on $(-\infty, a)$ and nonincreasing on $(a,\infty)$, such that
$$
\mu ({\rm d} x) = \mu (\{a\}) \delta_a + f(x) {\rm d}x.
$$

According to \cite[Theorem 1]{HT16}, every freely selfdecomposable distribution is unimodal. Hence, $\mu_{t,\theta,1}$ is unimodal for any $t,\theta>0$ by Proposition \ref{prop_FSD}. For given $t,\theta>0$, we investigate the values of $\lambda$ for which $\mu_{t,\theta,\lambda}$ remains unimodal.

\begin{prop}[Unimodality]\label{prop:unimodality}
For given $t,\theta>0$, the measure $\mu_{t,\theta,\lambda}$ is unimodal if and only if $1\le \lambda \le1+t/\theta$.
\end{prop}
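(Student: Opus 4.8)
The plan is to argue directly from the explicit density \eqref{eq:density} and the atom formula \eqref{eq:atom}, splitting into the cases with and without an atom at $0$. A preliminary observation I would record is that, by \eqref{eq:alpha^pm}, $\alpha^-\alpha^+ = (\theta(\lambda-1)-t)^2$ and $\alpha^-+\alpha^+ = 2(\theta(\lambda+1)+t)>0$; hence $\alpha^->0$ whenever $\lambda\neq 1+t/\theta$, while $\alpha^-=0$ exactly when $\lambda = 1+t/\theta$. Thus the continuous part of $\mu_{t,\theta,\lambda}$ is supported on $[\alpha^-,\alpha^+]$, which is bounded away from $0$ precisely when $\lambda\neq 1+t/\theta$.

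For the ``only if'' direction it suffices to treat $\lambda>1+t/\theta$, as the range $1\le\lambda\le 1+t/\theta$ is handled below. Here \eqref{eq:atom} produces a genuine atom at $0$ of mass $1-\tfrac{t}{\theta(\lambda-1)}>0$, while the density $f$ is supported on $[\alpha^-,\alpha^+]$ with $\alpha^->0$. In a unimodal law the (unique) atom must sit at the mode $a$, so necessarily $a=0$ and $f$ would have to be nonincreasing on $(0,\infty)$. This contradicts $f\equiv 0$ on $(0,\alpha^-)$ and $f>0$ on $(\alpha^-,\alpha^+)$. Hence $\mu_{t,\theta,\lambda}$ is not unimodal for $\lambda>1+t/\theta$.

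For the ``if'' direction, $1\le\lambda\le1+t/\theta$ gives no atom, so I would show that $f$ is unimodal on its support. Setting $c:=t(\lambda-1)\ge0$, for $x>0$ one has $f(x)=\tfrac{t}{2\pi\theta}\sqrt{g(x)}$ with $g=p/q$, $p(x)=(x-\alpha^-)(\alpha^+-x)$ and $q(x)=x^2(x+c)^2$; since $\sqrt{\cdot}$ is increasing, $f$ and $g$ share the same monotonicity on $\{g>0\}=(\alpha^-,\alpha^+)$. Cancelling the positive factor $x(x+c)$ from $g'$, the sign of $g'$ equals that of the cubic $H(x):=p'(x)(x^2+cx)-2p(x)(2x+c)$, whose leading coefficient is $+2$. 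Using $p(\alpha^\pm)=0$ together with $p'(\alpha^-)=\alpha^+-\alpha^->0$ and $p'(\alpha^+)=\alpha^--\alpha^+<0$ gives $H(\alpha^-)>0$ and $H(\alpha^+)<0$. The decisive step is a root count: since $H\to-\infty$ as $x\to-\infty$ and $H(\alpha^-)>0$, there is a root in $(-\infty,\alpha^-)$; since $H(\alpha^+)<0$ and $H\to+\infty$ as $x\to+\infty$, there is a root in $(\alpha^+,\infty)$. As a cubic has at most three real roots, exactly one root remains for $(\alpha^-,\alpha^+)$, realizing the sign change of $H$ from $+$ to $-$ there. Thus $g$, and with it $f$, is increasing then decreasing, i.e. unimodal. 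The boundary value $\lambda=1+t/\theta$, where $\alpha^-=0$, I would treat separately: the density then has an integrable $x^{-1/2}$ singularity at $0$, and the same $H$ restricted to $(0,\alpha^+)$ shows $f$ is nonincreasing there, so the mode sits at $0$. (The case $\lambda=1$ is also covered, consistent with Proposition \ref{prop_FSD} and the fact that freely selfdecomposable laws are unimodal.)

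The hard part is the ``if'' direction, specifically excluding extra interior critical points that would render $f$ bimodal. I expect the clean resolution to be the cubic root count above: forcing one root of $H$ below $\alpha^-$ and one above $\alpha^+$ leaves only a single admissible root inside $(\alpha^-,\alpha^+)$. Notably the endpoint signs $H(\alpha^-)>0$ and $H(\alpha^+)<0$ hold uniformly in $\lambda\ge1$, so the continuous bump is always unimodal; the genuine obstruction to unimodality of $\mu_{t,\theta,\lambda}$ is therefore the emergence of the atom at $0$ once $\lambda>1+t/\theta$.
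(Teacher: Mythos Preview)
Your proof is correct and follows essentially the same strategy as the paper: both reduce the unimodality of the density to the sign analysis of the same cubic (your $H$ equals the paper's $k$, since $c=t(\lambda-1)$). The one tactical difference is in establishing uniqueness of the interior root. The paper computes $k'$ and shows $k'(x)<0$ on $(\alpha^-,\alpha^+)$ by bounding the quadratic, so $k$ is strictly decreasing there; you instead use a global root count, forcing one root of the cubic into $(-\infty,\alpha^-)$ and another into $(\alpha^+,\infty)$, leaving exactly one for the interior. Your argument is arguably cleaner since it avoids bounding $k'$, though note your remark that ``$H(\alpha^-)>0$ holds uniformly in $\lambda\ge1$'' fails at the boundary $\lambda=1+t/\theta$ where $\alpha^-=0$ and $H(0)=0$; you do treat that case separately, so this is only a wording issue. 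For the non-unimodal direction $\lambda>1+t/\theta$, your argument (atom at $0$ plus density vanishing on $(0,\alpha^-)$ with $\alpha^->0$) is more direct than the paper's, which appeals to the interior mode $x_0$ of the absolutely continuous part established earlier.
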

\begin{proof}
If $1\le \lambda < 1+t/\theta$, we get $\mu_{t,\theta,\lambda}({\rm d}x) = \frac{t}{2\pi \theta} f(x)dx$ by \eqref{eq:density}, where
$$
f(x)=\frac{\sqrt{(x-\alpha^-)(\alpha^+-x)}}{x(x+t(\lambda-1))}, \qquad x\in (\alpha^-,\alpha^+),
$$
and $\alpha^\pm$ is defined by \eqref{eq:alpha^pm}.
By elementary calculus, we obtain
$$
f'(x)=\frac{k(x)}{2x^2(x+t(\lambda-1))^2\sqrt{(x-\alpha^-)(\alpha^+-x)}},
$$
where
$$
k(x):=2x^3 -3(\alpha^++\alpha^-)x^2+\{4\alpha^+\alpha^- -t(\lambda-1)(\alpha^++\alpha^-)\}x+2\alpha^+\alpha^-t(\lambda-1).
$$
We show that there exists a unique solution $x \in (\alpha^-,\alpha^+)$ of the equation $k(x)=0$. Since
\begin{align*}
k(\alpha^-)&=\{\alpha^- +t (\lambda-1) \}\alpha^-(\alpha^+-\alpha^-)>0,\\
k(\alpha^+)&=\{\alpha^+ +t (\lambda-1) \}\alpha^+(\alpha^--\alpha^+)<0,
\end{align*}
it follows from the intermediate value theorem that there exists at least one solution $x\in (\alpha^-,\alpha^+)$ to the equation $k(x)=0$. Next, we establish the uniqueness of solutions to $k(x)=0$. To this end, we show that the function $k(x)$ is monotone on the interval $(\alpha^-, \alpha^+)$. Since
\begin{align*}
k'(x) 
&= 6x^2-6(\alpha^++\alpha^-) x + \{4\alpha^+\alpha^- - t(\lambda-1)(\alpha^++\alpha^-)\}\\
&= 6\left(x-\frac{\alpha^++\alpha^-}{2}\right)^2 -\frac{3}{2}(\alpha^++\alpha^-)^2 + 4\alpha^+\alpha^- -t(\lambda-1)(\alpha^++\alpha^-)\\
&\le 6\left(\alpha^+-\frac{\alpha^++\alpha^-}{2}\right)^2 -\frac{3}{2}(\alpha^++\alpha^-)^2 + 4\alpha^+\alpha^- -t(\lambda-1)(\alpha^++\alpha^-)\\
&=-2\alpha^+\alpha^- - t(\lambda-1)(\alpha^++\alpha^-)<0,
\end{align*}
the function $k(x)$ is strictly decreasing on $(\alpha^-,\alpha^+)$. Hence the equation $k(x)=0$ has a unique solution in $(\alpha^-,\alpha^+)$, denoted by $x_0\in (\alpha^-,\alpha^+)$. Consequently, the function $f(x)$ is strictly increasing on $(\alpha^-,x_0)$ and strictly decreasing on $(x_0,\alpha^+)$, implying that $\mu_{t,\theta,\lambda}$ is unimodal with mode $x_0$.

If $\lambda= 1+t/\theta$, then $\alpha^-=0$ and $\alpha^+=4(\theta+t)$. In this case, one can verify that $f'(x)<0$ for all $x\in (0,\alpha^+)$. Hence, $\mu_{t,\theta,1+t/\theta}$ is unimodal with mode $0$.

If $\lambda> 1+t/\theta$, then $\mu_{t,\theta,\lambda}$ has an atom at $0$ by \eqref{eq:atom}. However, the absolutely continuous part of $\mu_{t,\theta,\lambda}$ possesses a mode $x_0$, as shown above. Therefore, in this case, $\mu_{t,\theta,\lambda}$ is not unimodal.
\end{proof}

\begin{rem}
According to the proof of Proposition \ref{prop:unimodality}, if $1\le \lambda <1+t/\theta$, then the density function of $\mu_{t,\theta,\lambda}$ is bounded by $f(x_0)$. In contrast, the density function of $\mu_{t,\theta,1+t/\theta}$ is unbounded since $f(x)\to \infty$ as $x\to 0^+$.
\end{rem}

\subsection{Background driving free L\'{e}vy process}
\label{sec:3.4}
Let $\mu$ be a freely selfdecomposable distribution on $\R$. By \cite[Theorem 6.5]{BNT06}, there exists a free L\'{e}vy process\footnote{See \cite{BNT02} for definition of free L\'{e}vy processes.} $\{Z_t\}_{t\ge0}$ affiliated with some $W^\ast$-probability space such that
$$
\mu = \mathcal{L}\left( \int_0^\infty e^{-t} {\rm d}Z_t \right)
$$
and the free L\'{e}vy measure $\nu$ of the law $\mathcal{L}(Z_1)$ satisfies
$$
\int_{\R\setminus[-1,1]} \log(1+|x|) \nu({\rm d}x) <\infty,
$$
where $\int_0^\infty e^{-t} {\rm d}Z_t$ is the free stochastic integral with respect to $\{Z_t\}_{t\ge 0}$, see \cite[Section 6]{BNT06} and \cite{MS23} for details.
The free L\'{e}vy process $\{Z_t\}_{t\ge0}$ is called the {\it background driving free L\'{e}vy process} of $\mu$.

By Proposition \ref{prop_FSD}, the measure $\mu_{1,\theta,1}$ is freely selfdecomposable. From the construction above, we can then consider the background driving free L\'{e}vy process $\{Z_t\}_{t\ge 0}$ of $\mu_{1,\theta,1}$. 

\begin{lem}\label{lem:z_t}
Let $\{Z_t\}_{t\ge0}$ be the free L\'{e}vy process above. Then
\begin{enumerate}
\setlength{\itemsep}{-\parsep}
\item[\rm (1)] For $t\ge 0$, we have
$$
R_{Z_t}(z) = \frac{tz}{\sqrt{1-4\theta z}}, \qquad z\in \C^-.
$$
\item[\rm (2)] The free L\'{e}vy measure of the law of $Z_t$ is given by
$$
\frac{t}{\pi x\sqrt{x(4\theta -x)}}\mathbf{1}_{(0,4\theta)}(x){\rm d}x.
$$
\end{enumerate}
\end{lem}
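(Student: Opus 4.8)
The plan is to exploit the defining property $\mu_{1,\theta,1} = \mathcal{L}\!\left(\int_0^\infty e^{-s}\,\dd Z_s\right)$ of the background driving free L\'evy process $\{Z_t\}_{t\ge0}$ together with the linearizing property of the R-transform. Since $\{Z_s\}_{s\ge 0}$ has free stationary increments with $\mathcal{L}(Z_s)=\mathcal{L}(Z_1)^{\boxplus s}$, I would approximate the free stochastic integral by Riemann sums of freely independent dilated increments $e^{-s_i}(Z_{s_{i+1}}-Z_{s_i})$; using additivity of the R-transform under $\boxplus$ and the dilation rule $R_{D_c\nu}(z)=R_\nu(cz)$, this yields (as in the calculus of \cite{BNT02,BNT06}) the identity
\[
R_{\mu_{1,\theta,1}}(z) = \int_0^\infty R_{Z_1}(e^{-s}z)\,\dd s .
\]
Substituting $u=e^{-s}z$ turns the right-hand side into $\int_0^z R_{Z_1}(u)/u\,\dd u$, and differentiating in $z$ gives the clean relation $R_{Z_1}(z)=z\,R_{\mu_{1,\theta,1}}'(z)$, which reduces part (1) to a direct computation.

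First I would compute $R_{\mu_{1,\theta,1}}$: setting $t=1,\ \lambda=1$ in \eqref{eq:R-trans_theta} gives $R_{\mu_{1,\theta,1}}(z)=\frac{1-\sqrt{1-4\theta z}}{2\theta}$, hence $R_{\mu_{1,\theta,1}}'(z)=(1-4\theta z)^{-1/2}$ and therefore $R_{Z_1}(z)=z(1-4\theta z)^{-1/2}$. Finally, since $Z_t$ has law $\mathcal{L}(Z_1)^{\boxplus t}$, the R-transform is additive in the time parameter, $R_{Z_t}(z)=t\,R_{Z_1}(z)=tz/\sqrt{1-4\theta z}$, which is the assertion of part (1).

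For part (2) I would identify the free L\'evy measure by verifying the free L\'evy--Khintchine representation \eqref{eqno1} directly. Writing $\nu_t(\dd x)=\frac{t}{\pi x\sqrt{x(4\theta-x)}}\mathbf{1}_{(0,4\theta)}(x)\,\dd x$ and noting that near $x=0$ its density behaves like $x^{-3/2}$, so that $\int\min\{1,|x|\}\,\nu_t(\dd x)<\infty$, one may use the representation $R_{Z_t}(z)=\eta z+\int\bigl(\frac{1}{1-zx}-1\bigr)\nu_t(\dd x)$ with a single drift $\eta$. Since $\frac{1}{1-zx}-1=\frac{zx}{1-zx}$, the integral becomes $\frac{tz}{\pi}\int_0^{4\theta}\frac{\dd x}{(1-zx)\sqrt{x(4\theta-x)}}$. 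The key computation is the closed evaluation of this integral: the substitution $x=2\theta(1-\cos\phi)$, $\phi\in(0,\pi)$, gives $\sqrt{x(4\theta-x)}=2\theta\sin\phi$ and turns it into $\int_0^\pi\frac{\dd\phi}{A+B\cos\phi}$ with $A=1-2\theta z$, $B=2\theta z$; the standard formula $\int_0^\pi\frac{\dd\phi}{A+B\cos\phi}=\pi/\sqrt{A^2-B^2}$ together with $A^2-B^2=1-4\theta z$ then produces exactly $\pi/\sqrt{1-4\theta z}$. Hence the integral equals $tz/\sqrt{1-4\theta z}=R_{Z_t}(z)$, forcing $\eta=0$ and no semicircular part; by uniqueness of the free characteristic triplet, $\nu_t$ is the free L\'evy measure of $Z_t$.

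The algebra of the R-transform computation is routine; the two points requiring care are (i) justifying the identity $R_{\mu_{1,\theta,1}}(z)=\int_0^\infty R_{Z_1}(e^{-s}z)\,\dd s$ for the free stochastic integral, which I would either cite from the free L\'evy-process calculus of \cite{BNT02,BNT06} or obtain via the Riemann-sum argument above, and (ii) the branch and convergence bookkeeping in the integral for part (2): one must check that for real $z<0$ one has $A>|B|>0$, so the cosine-integral formula applies and $\sqrt{1-4\theta z}$ is real positive, and then that analytic continuation to $z\in\C^-$ selects the principal branch consistent with \eqref{eq:R-trans_theta}. I expect (i) to be the main conceptual obstacle, since that is where the defining property of the background driving free L\'evy process enters, while (ii) is essentially a careful branch argument.
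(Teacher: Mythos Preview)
Your proposal is correct. For part~(1) you arrive at the same identity $R_{Z_t}(z)=tz\,R_{\mu_{1,\theta,1}}'(z)$ that the paper invokes by citing \cite[Theorem~6.7]{MS23}; you simply rederive it from the Riemann-sum/R-transform calculus rather than citing it, so the arguments are essentially the same, yours being more self-contained.

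For part~(2) the two proofs take genuinely different routes. The paper rewrites $tz/\sqrt{1-4\theta z}$ as a Stieltjes-type integral $-\frac{t}{2\sqrt{\theta}}\int_{1/4\theta}^\infty\frac{-z}{-z+x}\,\frac{\rmd x}{\pi\sqrt{x-1/4\theta}}$ (appealing to the Bernstein-function machinery of \cite{SSV12} or \cite[Example~7.2]{MS23}) and then changes variables to read off the L\'evy measure. You instead start from the candidate measure $\nu_t$ and evaluate $\int_0^{4\theta}\frac{\rmd x}{(1-zx)\sqrt{x(4\theta-x)}}$ directly via the substitution $x=2\theta(1-\cos\phi)$ and the classical formula $\int_0^\pi\frac{\rmd\phi}{A+B\cos\phi}=\pi/\sqrt{A^2-B^2}$; this is more elementary and avoids external references, while the paper's approach has the advantage that the L\'evy measure is \emph{discovered} rather than verified after the fact. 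Both arguments are complete, and your branch/convergence remarks in~(ii) adequately cover the analytic-continuation step.
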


\begin{proof}
(1) Recall that
$$
R_{\mu_{1,\theta,1}}(z) =  \frac{1-\sqrt{1-4\theta z}}{2\theta}, \qquad z\in \C^-.
$$
By \cite[Theorem 6.7]{MS23}, we have
\begin{align*}
R_{Z_t}(z) = tz \frac{d}{dz} R_{\mu_{1,\theta,1}}(z) = \frac{tz}{\sqrt{1-4\theta z}}.
\end{align*}
(2) We can further compute the R-transform of $Z_t$ as follows:
\begin{align*}
R_{Z_t}(z) &= \frac{tz}{\sqrt{1-4\theta z}}=-\frac{t}{2\sqrt{\theta}} \frac{-z}{\sqrt{-z+\frac{1}{4\theta}}}\\
&=-\frac{t}{2\sqrt{\theta}} \int_{\frac{1}{4\theta}}^\infty \frac{-z}{-z+x} \cdot \frac{1}{\pi \sqrt{x-\frac{1}{4\theta}}}{\rm d}x = \int_0^{4\theta} \left( \frac{1}{1-zx}-1\right) \frac{t}{\pi x \sqrt{x(4\theta -x)}}{\rm d}x,
\end{align*}
where the third equality follows from \cite[Page 304]{SSV12} or \cite[Example 7.2]{MS23}.
Consequently, the L\'{e}vy measure of $Z_t$ is $\frac{t}{\pi x \sqrt{x(4\theta -x)}} \mathbf{1}_{(0,4\theta)}(x){\rm d}x$.
\end{proof}

\begin{rem}
In \cite[Example 7.2]{MS23}, the R-transform and the free Lévy measure of the Meixner-type free gamma distribution $\eta(t,\theta)=\mu_{t\theta,\theta,1}$ were already investigated. In fact, the above lemma can be regarded as a generalization of \cite[Example 7.2]{MS23}.
\end{rem}

The regularity properties of the law of $Z_t$ can be analyzed as follows.
\begin{cor}
For any $t>0$, the law $\mathcal{L}(Z_t)$ is absolutely continuous with respect to Lebesgue measure with continuous density on $\R$.
\end{cor}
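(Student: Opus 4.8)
The plan is to read off the regularity of $\mathcal{L}(Z_t)$ from the boundary values of its Cauchy transform, using the explicit R-transform of Lemma~\ref{lem:z_t} together with the structure theory of freely infinitely divisible laws. First I would record that $\mathcal{L}(Z_t)$ is freely infinitely divisible: by Lemma~\ref{lem:z_t} we have $R_{Z_t}(z)=t\,R_{Z_1}(z)$, so $\{\mathcal{L}(Z_t)\}_{t\ge0}$ is a $\boxplus$-convolution semigroup and each member is $\boxplus$-infinitely divisible (equivalently, this is immediate from the free Lévy--Khintchine representation \eqref{eqno1}, with vanishing semicircular and drift parts and with the free Lévy measure of Lemma~\ref{lem:z_t}(2)). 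Consequently, by \cite[Theorem 3.4]{BB04} --- the same regularity result already used for $\mu_{t,\theta,\lambda}$ --- the measure $\mathcal{L}(Z_t)$ has no singular continuous part. It therefore remains to show that $\mathcal{L}(Z_t)$ has no atoms and that the density of its absolutely continuous part is continuous on $\R$.

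For the analytic input I would invert the R-transform relation. Writing $G=G_{Z_t}$ and using $K_{Z_t}(w):=G^{\langle-1\rangle}(w)=\tfrac1w+R_{Z_t}(w)/w=\tfrac1w+\tfrac{t}{\sqrt{1-4\theta w}}$, the identity $K_{Z_t}(G(z))=z$ reads
$$
z=\frac{1}{G(z)}+\frac{t}{\sqrt{1-4\theta G(z)}},\qquad z\in\C^+ .
$$
Substituting $u=\sqrt{1-4\theta G(z)}$ (so $G=(1-u^2)/(4\theta)$) converts this into the cubic
$$
z\,u^3-t\,u^2-(z-4\theta)\,u+t=0,
$$
whose coefficients are real and affine in $x$ when $z=x\in\R$. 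Hence $G_{Z_t}$ is an algebraic function; for each $x$ its boundary value $G_{Z_t}(x+i0)$ is a root of this cubic, depending continuously and piecewise real-analytically on $x$ except at the finitely many $x$ where the discriminant vanishes. Since $-\tfrac1\pi\Im G=\tfrac{1}{2\pi\theta}\Re(u)\Im(u)$, the Stieltjes inversion formula shows the density is positive exactly where the cubic has a conjugate pair of non-real roots, i.e.\ on the interval(s) delimited by consecutive discriminant zeros.

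The two remaining points --- and the step I expect to be the main obstacle --- are to rule out atoms and to establish genuine continuity at the boundary of the support. For the absence of atoms I would use $\mathcal{L}(Z_t)(\{a\})=\lim_{z\to a}(z-a)G(z)$: an atom forces $G(z)\to\infty$, i.e.\ $w=G\to\infty$ and $u\to\infty$, whence $z=K_{Z_t}(w)\to0$, so the only candidate is $a=0$; but along this branch $zG(z)=zw=1+\tfrac{tw}{\sqrt{1-4\theta w}}\to\infty$ rather than tending to a finite mass, which is incompatible with an atom (this is the free analogue of the fact that the infinite-activity free Lévy measure $\tfrac{t}{\pi x\sqrt{x(4\theta-x)}}\mathbf 1_{(0,4\theta)}$, non-integrable at $0$, precludes an atom). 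For continuity at a discriminant zero $x_\ast$, a simple double root of the cubic yields the square-root law $\Im G(x+i0)=O(\sqrt{|x-x_\ast|})$, a soft edge at which the density decays continuously to $0$; the one delicate endpoint is the edge located at (or approaching) $0$, where the pole term $1/w$ in $K_{Z_t}$ and the branch point of $\sqrt{1-4\theta w}$ interact, so I would treat that endpoint by a separate local expansion to confirm that the density remains bounded and tends continuously to its boundary value. Combining these facts gives absolute continuity with a density that extends continuously by $0$ to all of $\R$.
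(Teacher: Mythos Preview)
Your strategy is workable but far more laborious than necessary, and you leave the hardest step (continuity of the density at the edge near the origin) as an acknowledged gap. The paper bypasses all of the Cauchy-transform analysis by a single citation: since, by Lemma~\ref{lem:z_t}(2), the free L\'evy measure of $\mathcal{L}(Z_t)$ is $\frac{t}{\pi x\sqrt{x(4\theta-x)}}\mathbf{1}_{(0,4\theta)}(x)\,\rmd x$, one simply checks that
\[
\nu_t(\R)=\frac{t}{\pi}\int_0^{4\theta} x^{-3/2}(4\theta-x)^{-1/2}\,\rmd x=\infty,
\]
and then invokes \cite[Theorem~3.4]{HS17}, which asserts that any freely infinitely divisible law whose free L\'evy measure has infinite total mass is absolutely continuous with a continuous density on $\R$. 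That one result simultaneously rules out atoms, the singular continuous part, and gives continuity of the density --- exactly the three points you were proving separately.

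What your approach would buy, if completed, is an explicit algebraic description of $G_{Z_t}$ (as a branch of a cubic) and hence of the density; this could be useful for further quantitative questions but is overkill for the bare regularity statement. You even brush against the paper's argument yourself when you parenthetically note that the ``infinite-activity free L\'evy measure \ldots\ precludes an atom'' --- the point is that this observation, once upgraded via \cite{HS17}, gives the whole corollary at once.
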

\begin{proof}
Let $\nu_t$ be the free L\'{e}vy measure of $\mathcal{L}(Z_t)$. Due to Lemma \ref{lem:z_t} (2), we have
\begin{align*}
\nu_t(\R)
&=\frac{t}{\pi} \int_0^{4\theta} x^{-\frac{3}{2}} (4\theta- x)^{-\frac{1}{2}} {\rm d}x = \infty.
\end{align*}
According to \cite[Theorem 3.4]{HS17}, the measure $\mathcal{L}(Z_t)$ is absolutely continuous with respect to Lebesgue measure with continuous density on $\R$.
\end{proof}

Further, we can obtain the $n$-th free cumulant of the law of $Z_t$.
\begin{prop}
For $n\ge1$, we have
$$
\kappa_1(Z_t)= t \qquad \text{and} \qquad \kappa_n(Z_t) = t(2\theta)^{n-1} \frac{(2n-3)!!}{(n-1)!} \quad \text{for} \quad  n\ge 2.
$$
\end{prop}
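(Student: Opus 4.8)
The plan is to read the cumulants off directly from the closed form of the R-transform supplied by Lemma~\ref{lem:z_t}(1). As with $\mu_{t,\theta,\lambda}$ earlier in the paper, the $n$-th free cumulant $\kappa_n(Z_t)$ is by definition the coefficient of $z^n$ in the power-series expansion of $R_{Z_t}(z)$. Since $R_{Z_t}(z) = tz(1-4\theta z)^{-1/2}$, the entire computation reduces to expanding $(1-4\theta z)^{-1/2}$ as a power series in $z$ and multiplying by $tz$.

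First I would invoke the generalized binomial theorem in the standard form
\[
(1-x)^{-1/2} = \sum_{k=0}^\infty \binom{2k}{k} \frac{x^k}{4^k},
\]
and substitute $x = 4\theta z$ to get $(1-4\theta z)^{-1/2} = \sum_{k\ge 0} \binom{2k}{k}\theta^k z^k$. Multiplying by $tz$ then yields $R_{Z_t}(z) = t\sum_{k\ge 0}\binom{2k}{k}\theta^k z^{k+1}$, so extracting the coefficient of $z^n$ (i.e.\ setting $k=n-1$) gives $\kappa_n(Z_t) = t\binom{2n-2}{n-1}\theta^{n-1}$ for every $n\ge 1$. In particular $\kappa_1(Z_t)=t\binom{0}{0}=t$, which settles the $n=1$ case.

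It then remains to rewrite the central binomial coefficient in terms of the double factorial for $n\ge 2$. I would apply the elementary identity $(2m)! = 2^m\, m!\,(2m-1)!!$ with $m=n-1$, which gives $(2n-2)! = 2^{n-1}(n-1)!\,(2n-3)!!$. Substituting into $\binom{2n-2}{n-1} = (2n-2)!/((n-1)!)^2$ produces
\[
\binom{2n-2}{n-1} = \frac{2^{n-1}(2n-3)!!}{(n-1)!},
\]
whence $\kappa_n(Z_t) = t(2\theta)^{n-1}(2n-3)!!/(n-1)!$, exactly as claimed. A quick sanity check against small cases (e.g.\ $\kappa_2=2t\theta$ and $\kappa_3=6t\theta^2$) confirms the two expressions agree.

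There is no substantive obstacle in this argument: it is a routine power-series extraction followed by a standard factorial identity. The only point meriting minor care is the legitimacy of the coefficient comparison, but this is immediate because the R-transform is analytic on a neighborhood of $0$ (the binomial expansion converges for $|4\theta z|<1$), so the formal series defining the free cumulants is genuinely the Taylor expansion of $R_{Z_t}$.
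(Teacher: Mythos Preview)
Your argument is correct. It differs from the paper's proof in that you start from the closed form $R_{Z_t}(z)=tz(1-4\theta z)^{-1/2}$ in Lemma~\ref{lem:z_t}(1) and expand via the generalized binomial series, whereas the paper instead uses the integral representation from Lemma~\ref{lem:z_t}(2), substitutes to obtain an Euler-type integral, recognises it as ${}_2F_1(1,\tfrac12,1;4\theta z)$, and reads the coefficients off the hypergeometric series $\sum_{n\ge0}\frac{(1)^{(n)}(1/2)^{(n)}}{(1)^{(n)}n!}(4\theta z)^n$. The two expansions are of course the same power series written in different guises; your route is the more elementary one (it avoids the hypergeometric machinery and the intermediate change of variables), while the paper's approach has the advantage of illustrating how the L\'evy-measure representation leads naturally to moment computations and would generalise more readily to situations where no simple closed form for the R-transform is available.
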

\begin{proof}
By Lemma \ref{lem:z_t} (2), for $z$ small enough (more strictly, $|z|<1/4\theta$), we have
\begin{align*}
R_{Z_t}(z) 
&= z \int_0^{4\theta} \frac{1}{1-zx} \frac{t}{\pi\sqrt{x(4\theta-x)}}{\rm d}x\\
&=z \int_0^1 \frac{1}{1-4\theta z u} \cdot \frac{t}{\pi\sqrt{u(1-u)}}{\rm d}u \qquad \text{($x=4\theta u$)}\\
&=\frac{tz}{\pi} \int_0^1 u^{-\frac{1}{2}} (1-u)^{-\frac{1}{2}}(1-4\theta zu)^{-1}{\rm d}u\\
&=\frac{tz}{\pi} \frac{\Gamma(\frac{1}{2})^2}{\Gamma(1)} {}_2 F_1\left(1,\frac{1}{2},1; 4\theta z \right) \qquad \text{(by Euler integral representation)}\\
&=tz \sum_{n=0}^\infty \frac{(1)^{(n)}(\frac{1}{2})^{(n)}}{(1)^{(n)} n!} (4\theta z)^n \qquad \text{($(x)^{(n)}:=x(x+1)\cdots (x+n-1)$)}\\
&=tz + \sum_{n=2}^\infty t (2\theta)^{n-1}\frac{(2n-3)!!}{(n-1)!} z^n.
\end{align*}
Comparing the coefficients of $z^n$ then yields the desired result.
\end{proof}

\subsection{Correlation of a free gamma process}
\label{sec:3.5}

In noncommutative setting, we can consider covariance and correlation as follows. Let $(\mathcal{A},\varphi)$ be a $C^\ast$-probability space and $x,y\in \mathcal{A}$. Then their covariance is defined by
$$
\text{Cov}(x,y) := \varphi(xy)-\varphi(x)\varphi(y).
$$
It is easy to see that $\text{Cov}(x,y)=0$ if $x,y$ are free. Next, their correlation can be define by
$$
\text{Corr}(x,y) := \frac{\text{Cov}(x,y)}{\sqrt{\kappa_2(x)}\sqrt{\kappa_2(y)}},
$$
when $x,y$ have non-zero $2$nd free cumulant (variance). In general, we note that $\text{Corr}(x,y) \neq \text{Corr}(y,x)$ since $xy\neq yx$.

Let $\{X_t\}_{t\ge0}$ be a stochastic process in a $C^\ast$-probability space. The process $\{X_t\}_{t\ge0}$ is called a {\it Meixner-type free gamma process} if it is a free L\'{e}vy process whose marginal distribution at time $1$ is $\mu_{1,\theta,\lambda}$ for some $\theta>0$ and $\lambda\ge1$. By the definition of free L\'{e}vy processes, it follows that $X_t\sim \mu_{t,\theta,\lambda}$ for $t>0$. Below, we compute the correlation of a free gamma process.

\begin{prop}[Correlation]\label{prop:corr}
Let $\{X_t\}_{t\ge0}$ be Meixner-type free gamma process in a $C^\ast$-probability space $(\mathcal{A},\varphi)$. For any $s,t>0$, we have
\begin{align*}
\text{Corr}(X_s, X_t)= \text{Corr}(X_t,X_s)= \sqrt{\frac{s}{t}}.
\end{align*}
\end{prop}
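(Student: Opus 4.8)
The plan is to exploit the defining property of the free L\'evy process $\{X_t\}_{t\ge 0}$ --- namely that it has free and stationary increments with $X_0=0$ --- together with the elementary fact that freely independent elements are uncorrelated. Since the two covariances $\mathrm{Cov}(X_s,X_t)$ and $\mathrm{Cov}(X_t,X_s)$ will turn out to coincide, and the claimed value $\sqrt{s/t}$ corresponds to the case $s\le t$, I may assume without loss of generality that $0<s\le t$.

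First I would write $X_t=X_s+(X_t-X_s)$ and set $Y:=X_t-X_s$. By freeness and stationarity of the increments over the disjoint intervals $[0,s]$ and $[s,t]$, the increment $Y$ is free from $X_s$ and $Y\sim\mu_{t-s,\theta,\lambda}$. The crux is then the covariance. Using that $\varphi(ab)=\varphi(a)\varphi(b)$ whenever $a,b$ are free (immediate from applying freeness to the centered product $(a-\varphi(a))(b-\varphi(b))$), I obtain
\begin{align*}
\varphi(X_s X_t)=\varphi(X_s^2)+\varphi(X_s Y)=\varphi(X_s^2)+\varphi(X_s)\varphi(Y),
\end{align*}
and since $\varphi(X_t)=\varphi(X_s)+\varphi(Y)$, subtracting $\varphi(X_s)\varphi(X_t)$ gives
$$
\mathrm{Cov}(X_s,X_t)=\varphi(X_s^2)-\varphi(X_s)^2=\kappa_2(X_s).
$$
The identical manipulation applied to $\varphi(X_t X_s)=\varphi(X_s^2)+\varphi(Y)\varphi(X_s)$ yields $\mathrm{Cov}(X_t,X_s)=\kappa_2(X_s)$ as well, so the two covariances agree; noncommutativity causes no difficulty here precisely because only a single product of two free elements ever appears.

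Next I would evaluate the second free cumulants. One has $\kappa_2(\mu_{r,\theta,\lambda})=r\,m_1(\pi_{\theta,\lambda})=r\theta\lambda$, obtained either by expanding the R-transform \eqref{eq:R-trans_theta} to second order, or from \eqref{eq:free_cumulant2} together with the fact that the first moment of the Marchenko-Pastur law is its mean $\theta\lambda$, read off as the coefficient of $z$ in $R_{\pi_{\theta,\lambda}}(z)=\theta\lambda z/(1-\theta z)$ from Example \ref{ex_MPlaw}. In particular $\kappa_2(X_s)=s\theta\lambda$ and $\kappa_2(X_t)=t\theta\lambda$, so substituting into the definition of the correlation gives
$$
\mathrm{Corr}(X_s,X_t)=\frac{\kappa_2(X_s)}{\sqrt{\kappa_2(X_s)}\,\sqrt{\kappa_2(X_t)}}=\frac{s\theta\lambda}{\sqrt{s\theta\lambda}\,\sqrt{t\theta\lambda}}=\sqrt{\frac{s}{t}},
$$
and the same value for $\mathrm{Corr}(X_t,X_s)$, completing the argument.

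I do not expect a genuine obstacle: the computation is short, and the only points requiring care are (i) invoking freeness of the increment $Y$ from the whole of $X_s$ (not merely mutual freeness of disjoint increments), which is part of the free L\'evy process definition, and (ii) tracking the operator ordering so as to confirm that $\mathrm{Corr}(X_s,X_t)$ and $\mathrm{Corr}(X_t,X_s)$ yield the same number. The one subtlety worth flagging is that the closed form $\sqrt{s/t}$ presupposes $s\le t$; for $s>t$ the very same computation returns $\mathrm{Cov}=\kappa_2(X_t)$ and hence $\sqrt{t/s}$, so the symmetric statement is to be read with $s\le t$.
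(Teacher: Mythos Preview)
Your argument is correct and follows essentially the same route as the paper's: write $X_t=X_s+(X_t-X_s)$, use freeness of the increment from $X_s$ to reduce $\mathrm{Cov}(X_s,X_t)$ to $\kappa_2(X_s)$, and then form the ratio. One point worth flagging: your value $\kappa_2(\mu_{r,\theta,\lambda})=r\theta\lambda$, read off from the R-transform, is the correct one --- the paper's proof and Example~\ref{ex:moment_GFMG} record $\kappa_2=r\theta$, which traces back to a slip in the closed form in~\eqref{eq:free_cumulant2} (since $m_1(\pi_{\theta,\lambda})=\theta\lambda$, the exponent there should be $\lambda^{k+1}$); the discrepancy is harmless for this proposition because the constant factor cancels in the correlation.
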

\begin{proof}
For $s<t$, we have
\begin{align*}
\text{Cov}(X_s,X_t) 
&= \varphi(X_s X_t) - \varphi(X_s)\varphi(X_t)\\
&= \varphi(X_s (X_t-X_s) + X_s^2) -\varphi(X_s)\varphi(X_t)\\
&= \varphi(X_s (X_t-X_s)) + \varphi(X_s^2)- \varphi(X_s)\varphi(X_t).
\end{align*}
By Theorem \ref{prop:moments_GFMG} (or Example \ref{ex:moment_GFMG}), we have $\varphi(X_s)=s$ and $\varphi(X_s^2)=s^2+\theta s$. Since $\{X_t\}_{t\ge0}$ is a free L\'{e}vy process, $X_s$ and $X_t-X_s$ are free, and hence $\varphi(X_s(X_t-X_s))=\varphi(X_s) \varphi(X_t-X_s)$. Moreover, by the definition of free L\'{e}vy process, $X_t-X_s \overset{\rmd}{=} X_{t-s}$. Finally, we get
\begin{align*}
\text{Cov}(X_s,X_t) = s(t-s) + s^2+\theta s -st =\theta s.
\end{align*}
By \eqref{eq:free_cumulant} (or Example \ref{ex:moment_GFMG} again), we obtain $\kappa_2(X_s)=\kappa_2(\mu_{s,\theta,\lambda})=\theta s$. Hence,
$$
\text{Corr}(X_s,X_t) = \frac{\theta s} {\sqrt{\theta s} \sqrt{\theta t}} = \sqrt{\frac{s}{t}}.
$$
Recalling the computation of their covariance, we observe that $\text{Cov}(X_t,X_s)=\text{Cov}(X_s,X_t)$ even if $s<t$. Consequently, it also follows that $\text{Corr}(X_t,X_s)=\text{Corr}(X_s,X_t)$.

If $s=t$, then 
$$
\text{Cov}(X_s,X_s)=\varphi(X_s^2)-\varphi(X_s)^2=s^2+\theta s - s^2 = \theta s,
$$
and therefore $\text{Corr}(X_s,X_s)=1$.
\end{proof}

In classical probability, it is known that the correlation of gamma process $\{G_t\}_{t\ge0}$ is
$$
\text{Corr}(G_s,G_t) = \sqrt{\frac{s}{t}} \qquad \text{for} \qquad s,t>0.
$$
For the reason, Proposition \ref{prop:corr} is entirely analogous to the above classical result.


\section{Free convolution formula and free beta prime distributions}
\label{sec4}

\subsection{S-transform} 
In this section, we compute the S-transform of $\mu_{t,\theta,\lambda}$. 

\begin{lem}\label{prop_Strans_GFMG}
For $t,\theta>0$ and $\lambda \ge1$, we have
$$
S_{\mu_{t,\theta,\lambda}} (z) = \frac{t-\theta z}{t(t+\theta(\lambda-1)z)}, \qquad z \in \left( -1 + \mu_{t,\theta,\lambda}(\{0\}),0 \right).
$$
\end{lem}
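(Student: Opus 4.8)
The plan is to bypass the direct computation of the moment generating function $\Psi_{\mu_{t,\theta,\lambda}}$ and its inverse, and instead to extract $S_{\mu_{t,\theta,\lambda}}$ from the explicit R-transform in \eqref{eq:R-trans_theta} by means of the functional relation \eqref{eq:formula_R_S}, namely $R_{\mu_{t,\theta,\lambda}}(z\,S_{\mu_{t,\theta,\lambda}}(z))=z$. Setting $w:=z\,S_{\mu_{t,\theta,\lambda}}(z)$, the task reduces to solving $R_{\mu_{t,\theta,\lambda}}(w)=z$ for $w$. Inserting \eqref{eq:R-trans_theta}, this reads
\[
\frac{2\theta z}{t}=1+\theta(1-\lambda)w-\sqrt{(1+\theta(1-\lambda)w)^2-4\theta w}.
\]
First I would isolate the radical as $\sqrt{(1+\theta(1-\lambda)w)^2-4\theta w}=1+\theta(1-\lambda)w-\tfrac{2\theta z}{t}$ and square both sides. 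The term $(1+\theta(1-\lambda)w)^2$ cancels, and what remains is \emph{linear} in $w$. Solving that linear equation gives
\[
w=\frac{z(t-\theta z)}{t\,(t+\theta(\lambda-1)z)},
\]
so that dividing by $z$ yields the asserted formula $S_{\mu_{t,\theta,\lambda}}(z)=\frac{t-\theta z}{t(t+\theta(\lambda-1)z)}$.

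The step I expect to require the most care is the justification of the squaring and the choice of branch of the square root. Squaring can introduce a spurious solution, so I would verify that on a suitable one-sided neighborhood of $0$ the isolated quantity $1+\theta(1-\lambda)w-\tfrac{2\theta z}{t}$ is nonnegative, i.e. that the principal branch fixing $R_{\mu_{t,\theta,\lambda}}$ in \eqref{eq:R-trans_theta} is compatible with the sign of the isolated root. This sign check is the genuine analytic content hiding behind the otherwise routine algebra; once it is in place, the squared identity is equivalent to the original one and the linear solution above is the correct one.

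Finally, I would pin down the domain. The relation \eqref{eq:formula_R_S} is valid only for $z$ in a neighborhood of $(\mu_{t,\theta,\lambda}(\{0\})-1,0)$, but since the derived expression $\frac{t-\theta z}{t(t+\theta(\lambda-1)z)}$ is a rational, hence real-analytic, function of $z$, the equality propagates by analytic continuation to the whole interval $(-1+\mu_{t,\theta,\lambda}(\{0\}),0)$. As a consistency check I would use \eqref{eq:atom}: for $1\le\lambda\le 1+t/\theta$ one has $\mu_{t,\theta,\lambda}(\{0\})=0$ and $t+\theta(\lambda-1)z>0$ on $(-1,0)$, so the rational function is analytic there; while for $\lambda>1+t/\theta$ one has $\mu_{t,\theta,\lambda}(\{0\})=1-\tfrac{t}{\theta(\lambda-1)}$, whence the left endpoint $-1+\mu_{t,\theta,\lambda}(\{0\})=-\tfrac{t}{\theta(\lambda-1)}$ is precisely the zero of the denominator $t+\theta(\lambda-1)z$. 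This confirms that the interval stated in the lemma is exactly the natural domain of the S-transform.
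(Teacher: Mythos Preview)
Your proposal is correct and follows essentially the same route as the paper: both use the relation \eqref{eq:formula_R_S} together with the explicit R-transform \eqref{eq:R-trans_theta} to obtain $R_{\mu_{t,\theta,\lambda}}^{\langle-1\rangle}(z)=\dfrac{z(t-\theta z)}{t(t+\theta(\lambda-1)z)}$ and then divide by $z$. The paper simply labels this a ``straightforward computation'' without spelling out the squaring step or the domain check, so your additional discussion of the branch choice and of why the denominator does not vanish on $(-1+\mu_{t,\theta,\lambda}(\{0\}),0)$ is a welcome elaboration rather than a different method.
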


\begin{proof}
A straightforward computation together with \eqref{eq:formula_R_S} and \eqref{eq:R-trans_theta} shows that the compositional inverse of $R_{\mu_{t,\theta,\lambda}} $ is
$$
R_{\mu_{t,\theta,\lambda}}^{\langle -1 \rangle} (z) = \frac{z(\theta z-t)}{\theta t (1-\lambda)z -t^2}, \qquad z\in (-1+ \mu_{t,\theta,\lambda}(\{0\}),0),
$$
which in turn implies that
\begin{align*}
S_{\mu_{t,\theta,\lambda}} (z) =  \frac{R_{\mu_{t,\theta,\lambda}}^{\langle -1 \rangle} (z)}{z}= \frac{t-\theta z}{t(t+\theta (\lambda-1)z)}.
\end{align*}
\end{proof}

In particular, we show that the measure $\mu_{t,\theta,1}$ is the inverse of Marchenko-Pastur distribution.
\begin{lem}\label{lem:muinverse_formula}
For $t,\theta>0$, we get $\mu_{t,\theta,1} = \pi _{\frac{\theta}{t^2}, 1+\frac{t}{\theta}}^{\langle-1\rangle}$.
\end{lem}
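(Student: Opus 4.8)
The plan is to prove the identity by comparing S-transforms, exploiting the fact that the S-transform uniquely determines a probability measure on $\R_{\ge 0}\setminus\{\delta_0\}$. By Lemma \ref{prop_Strans_GFMG} specialized to $\lambda = 1$, the S-transform of $\mu_{t,\theta,1}$ collapses to $S_{\mu_{t,\theta,1}}(z) = \frac{t-\theta z}{t^2}$. Moreover, since $1 \le 1 + t/\theta$, formula \eqref{eq:atom} gives $\mu_{t,\theta,1}(\{0\}) = 0$, so this S-transform is defined precisely on $(-1,0)$. The whole argument then reduces to producing the same rational function from the right-hand side.

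First I would compute the S-transform of $\pi_{\frac{\theta}{t^2},\,1+\frac{t}{\theta}}^{\langle -1\rangle}$ directly from Example \ref{ex:inverse_MP}, which applies because the shape parameter $1 + t/\theta$ exceeds $1$. Substituting scale parameter $\frac{\theta}{t^2}$ and shape parameter $1 + \frac{t}{\theta}$ into the formula $S_{\pi_{\theta,\lambda}^{\langle -1\rangle}}(z) = \theta(\lambda-1-z)$ yields, on $(-1,0)$,
\[
S_{\pi_{\frac{\theta}{t^2},\,1+\frac{t}{\theta}}^{\langle -1\rangle}}(z)
= \frac{\theta}{t^2}\left(1 + \frac{t}{\theta} - 1 - z\right)
= \frac{\theta}{t^2}\cdot \frac{t - \theta z}{\theta}
= \frac{t - \theta z}{t^2}.
\]
This coincides with $S_{\mu_{t,\theta,1}}$ on the common domain $(-1,0)$, and injectivity of the S-transform delivers the claimed equality of measures.

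The computation itself is elementary, so the only point demanding care is the bookkeeping of atoms and domains that legitimizes the manipulation. I must confirm that both measures are free of an atom at $0$: for $\pi_{\frac{\theta}{t^2},\,1+\frac{t}{\theta}}$ the shape parameter exceeds $1$, so $\max\{0,\,1 - (1+t/\theta)\} = 0$, which both guarantees the reversal $\mu^{\langle -1\rangle}$ is well defined and fixes the domain of its S-transform as $(-1,0)$; for $\mu_{t,\theta,1}$ the same vanishing atom follows from \eqref{eq:atom}. I do not anticipate a genuine obstacle here — the subtlety is entirely administrative, namely ensuring the two S-transforms are compared on the identical interval $(-1,0)$ before invoking uniqueness.
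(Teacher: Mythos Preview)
Your proof is correct and follows essentially the same route as the paper: both compute $S_{\mu_{t,\theta,1}}(z)$ from Lemma~\ref{prop_Strans_GFMG} and $S_{\pi_{\theta/t^2,\,1+t/\theta}^{\langle -1\rangle}}(z)$ from Example~\ref{ex:inverse_MP}, and match them as $\frac{t-\theta z}{t^2}$. Your additional remarks on atoms and domains are sound and make the injectivity step explicit, but add nothing the paper does not implicitly assume.
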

\begin{proof}
The desired formula follows from  the S-transform of $\pi_{\theta/t^2, 1+t/\theta}^{\langle -1\rangle}$. Actually, from Example \ref{ex:inverse_MP} and Lemma \ref{prop_Strans_GFMG}, we have
\begin{align*}
S_{\pi_{\theta/t^2, 1+t/\theta}^{\langle -1\rangle}}(z) = \frac{\theta}{t^2}\left(\frac{t}{\theta}-z\right) = S_{\mu_{t,\theta,1}}(z).
\end{align*}
\end{proof}

\subsection{Free convolution formula}
\label{sec4.2}

Using the S-transform, we can analyze the effect of the third parameter $\lambda$ on the measure $\mu_{t,\theta,\lambda}$ as follows.

\begin{thm}[Free convolution formula for $\mu_{t,\theta,\lambda}$]\label{thm:formula_free_multiplicative}
Consider $t,\theta>0$. If $\lambda>1$, then
\begin{align}
\mu_{t,\theta,\lambda} 
&=D_{t(\lambda-1)}\left(\pi_{1,\frac{t}{\theta(\lambda-1)}} \boxtimes (\pi_{1,1+\frac{t}{\theta}})^{\langle -1\rangle} \right) \label{eq:mu_freebetaprime}\\
&= \mu_{t,\theta,1} \boxtimes \pi_{q^{-1}, q}, \label{eq:mu_anotherrep}
\end{align}
where $q := \frac{t}{\theta(\lambda-1)}$. In particular, $\mu_{t,\theta,1+t/\theta}= \mu_{t,\theta,1} \boxtimes \pi_{1,1}$. Hence the measure $\mu_{t,\theta,1+t/\theta}$ belongs to the class of free compound Poisson distributions.
\end{thm}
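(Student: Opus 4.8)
The plan is to carry out the entire argument at the level of the S-transform, exploiting that it linearizes free multiplicative convolution via $S_{\mu\boxtimes\nu}=S_\mu S_\nu$, rescales dilations by the scalar factor in \eqref{eq:Strans_dilation}, and inverts reversed measures through \eqref{eq:S-trans_inverse}. Since the S-transform uniquely determines a measure in $\calP(\R_{\ge0})\setminus\{\delta_0\}$, it suffices to verify that the S-transform of each right-hand side agrees, on a common interval of the negative real axis, with the explicit expression $S_{\mu_{t,\theta,\lambda}}(z)=\frac{t-\theta z}{t(t+\theta(\lambda-1)z)}$ furnished by Lemma \ref{prop_Strans_GFMG}.

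For \eqref{eq:mu_freebetaprime} I would read off $S_{\pi_{1,q}}(z)=\frac{1}{q+z}$ from Example \ref{ex_MPlaw} with $q=\frac{t}{\theta(\lambda-1)}$, and $S_{(\pi_{1,1+t/\theta})^{\langle-1\rangle}}(z)=\frac{t}{\theta}-z$ from Example \ref{ex:inverse_MP}. Multiplying these two and then applying the dilation rule \eqref{eq:Strans_dilation} with factor $c=t(\lambda-1)$ yields $\frac{1}{t(\lambda-1)}\cdot\frac{t/\theta-z}{q+z}$; clearing the common factor $\theta(\lambda-1)$ from numerator and denominator of the fraction collapses this to $\frac{t-\theta z}{t(t+\theta(\lambda-1)z)}$, which is exactly $S_{\mu_{t,\theta,\lambda}}(z)$. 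For \eqref{eq:mu_anotherrep} I would instead use the $\lambda=1$ specialization $S_{\mu_{t,\theta,1}}(z)=\frac{t-\theta z}{t^2}$ of Lemma \ref{prop_Strans_GFMG} together with $S_{\pi_{q^{-1},q}}(z)=\frac{q}{q+z}$; after substituting $q=\frac{t}{\theta(\lambda-1)}$, the product again simplifies directly to $S_{\mu_{t,\theta,\lambda}}(z)$, establishing the second representation.

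The special case is then immediate: setting $\lambda=1+t/\theta$ forces $q=1$, so $\pi_{q^{-1},q}=\pi_{1,1}$ and hence $\mu_{t,\theta,1+t/\theta}=\mu_{t,\theta,1}\boxtimes\pi_{1,1}$. To identify this with a free compound Poisson law, I would verify in one line that multiplicative convolution with the standard free Poisson measure $\pi_{1,1}$ realizes the free compound Poisson distribution with rate $1$ and jump law $\nu$: for $\rho=\pi_{1,1}\boxtimes\nu$ one has $S_\rho(z)=\frac{1}{1+z}S_\nu(z)$, so that $zS_\rho(z)=\frac{z}{1+z}S_\nu(z)=\Psi_\nu^{\langle-1\rangle}(z)$, and by \eqref{eq:formula_R_S} this gives $R_\rho(w)=\Psi_\nu(w)=\int(\frac{1}{1-wx}-1)\nu(\rmd x)$, the R-transform of the free compound Poisson law with jump distribution $\nu$. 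Taking $\nu=\mu_{t,\theta,1}$ finishes the claim.

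The main obstacle I anticipate is not the algebra but the domain and atom bookkeeping needed to legitimately invoke injectivity of the S-transform. One must confirm that every factor entering the convolutions lies in $\calP(\R_{\ge0})\setminus\{\delta_0\}$, that the intervals on which the various S-transforms are defined genuinely overlap, and—when $\lambda>1+t/\theta$, where $\mu_{t,\theta,\lambda}$ acquires an atom at $0$—that the expected mass $\mu_{t,\theta,\lambda}(\{0\})$ predicted by \eqref{eq:atom} is consistent with the endpoint $-1+\mu_{t,\theta,\lambda}(\{0\})$ of the S-transform domain on both sides. Once these identifications are pinned down, equality of S-transforms on the shared interval forces equality of the measures.
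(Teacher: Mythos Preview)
Your proposal is correct and follows essentially the same S-transform strategy as the paper: both verify \eqref{eq:mu_freebetaprime} by multiplying the S-transforms from Examples~\ref{ex_MPlaw} and~\ref{ex:inverse_MP}, applying the dilation rule, and matching the result against Lemma~\ref{prop_Strans_GFMG}. For \eqref{eq:mu_anotherrep} the paper instead derives it from \eqref{eq:mu_freebetaprime} via dilation identities and Lemma~\ref{lem:muinverse_formula}, whereas you compute the S-transform product $S_{\mu_{t,\theta,1}}\cdot S_{\pi_{q^{-1},q}}$ directly; both routes are equally short, and your explicit verification that $\pi_{1,1}\boxtimes\nu$ has the free compound Poisson R-transform is a welcome addition that the paper leaves implicit.
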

\begin{proof}
By Examples \ref{ex_MPlaw}, \ref{ex:inverse_MP} and Lemma \ref{prop_Strans_GFMG}, we have
\begin{align*}
S_{ D_{t(\lambda-1)}\left(\pi_{1,\frac{t}{\theta(\lambda-1)}} \boxtimes (\pi_{1,1+\frac{t}{\theta}})^{\langle -1\rangle} \right)}(z) 
&=\frac{1}{t(\lambda-1)} \frac{1}{\frac{t}{\theta(\lambda-1)}+z} \left(\frac{t}{\theta}-z\right)\\
&=\frac{t-\theta z}{t(t+\theta(\lambda-1)z)}\\
&=S_{\mu_{t,\theta,\lambda}}(z).
\end{align*}
Thus the equation \eqref{eq:mu_freebetaprime} holds. Since $\pi_{\theta,\lambda}=D_\theta(\pi_{1,\lambda})$, we obtain
\begin{align*}
\mu_{t,\theta,\lambda}
&=D_{t(\lambda-1)}\left(\pi_{1,\frac{t}{\theta(\lambda-1)}} \boxtimes (\pi_{1,1+\frac{t}{\theta}})^{\langle -1\rangle} \right) \\
&=D_{t(\lambda-1)} \circ D_{\frac{t}{\theta(\lambda-1)}} \left( \pi_{\frac{\theta(\lambda-1)}{t},\frac{t}{\theta(\lambda-1)}} \boxtimes (\pi_{1,1+\frac{t}{\theta}})^{\langle -1\rangle} \right)\\
&=\pi_{q^{-1},q}\boxtimes (\pi_{\frac{\theta}{t^2},1+\frac{t}{\theta}})^{\langle-1\rangle}\\
&=\pi_{q^{-1},q}\boxtimes  \mu_{t,\theta,1},
\end{align*}
where the last equality follows from Lemma \ref{lem:muinverse_formula}.
\end{proof}

According to Theorem \ref{thm:formula_free_multiplicative}, for any $t,\theta>0$ and $\lambda>1$, the measure $\mu_{t,\theta,\lambda}$ coincides with 
\begin{align}\label{eq:mu_freebetaprime_relation}
D_{t(\lambda-1)}\left(f\beta' \left(\frac{t}{\theta(\lambda-1)}, 1+\frac{t}{\theta} \right)\right),
\end{align}
where 
$$
f\beta'(a, b): = \pi_{1,a}\boxtimes \pi_{1,b}^{\langle -1 \rangle}, \qquad a>0 \text{ and } b>1,
$$ 
is the {\it free beta prime distribution}, introduced by Yoshida \cite[Section 3.4]{Yos20}. Thus, by using \cite[Theorem 6.1]{Yos20}, we obtain a combinatorial formula for the moments of $\mu_{t,\theta,\lambda}$. 

\begin{cor}\label{cor:moment_another}
For $t,\theta>0$ and $\lambda>1$, the $n$-th moment of $\mu_{t,\theta,\lambda}$ is given by
$$
m_n(\mu_{t,\theta,\lambda}) = \theta^n \sum_{\pi \in \mathcal{NCL}(n)}\lambda^{|\pi|-\text{sg}(\pi)} \left(\frac{t}{\theta}\right)^{|\pi|-\text{dc}(\pi)},
$$
where $\mathcal{NCL}(n)$ is the set of all non-crossing linked partitions of $\{1,\dots,n\}$ (see \cite{Dyk07}), ${\rm sg}(\pi)$ is the number of singletons in $\pi$ and ${\rm dc}(\pi)$ is the number of doubly covered elements by $\pi$ (see \cite[Definition 5.7]{Yos20}).
\end{cor}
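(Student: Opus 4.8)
My plan is to avoid the free-cumulant route of Proposition~\ref{prop:moments_GFMG} and instead extract the moments directly from the multiplicative factorization of Section~\ref{sec4.2}. By \eqref{eq:mu_freebetaprime_relation}, which restates Theorem~\ref{thm:formula_free_multiplicative}, we have for $\lambda>1$
\[
\mu_{t,\theta,\lambda}=D_{t(\lambda-1)}\!\left(f\beta'(a,b)\right),\qquad a=\frac{t}{\theta(\lambda-1)},\quad b=1+\frac{t}{\theta},
\]
and both constraints $a>0$ and $b>1$ needed for $f\beta'(a,b)$ hold precisely because $t,\theta>0$ and $\lambda>1$. Thus the problem reduces to a moment formula for the free beta prime law together with the elementary scaling of moments under dilation. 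I would first record that if $X\sim\mu$ then $cX\sim D_c(\mu)$, so that $m_n(D_c(\mu))=c^n m_n(\mu)$; applying this with $c=t(\lambda-1)$ gives
\[
m_n(\mu_{t,\theta,\lambda})=\bigl(t(\lambda-1)\bigr)^n\,m_n\!\bigl(f\beta'(a,b)\bigr).
\]

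Next I would invoke Yoshida's combinatorial moment formula \cite[Theorem~6.1]{Yos20}, which writes $m_n(f\beta'(a,b))$ as a sum over the non-crossing linked partitions $\pi\in\mathcal{NCL}(n)$, each contributing an explicit monomial in the two shape parameters whose exponents are dictated by the block count $|\pi|$, the number of singletons $\mathrm{sg}(\pi)$, and the number of doubly-covered elements $\mathrm{dc}(\pi)$. Into this formula I would substitute the three elementary identities
\[
b-1=\frac{t}{\theta},\qquad a=\frac{t}{\theta(\lambda-1)},\qquad a+b-1=\frac{t}{\theta}\cdot\frac{\lambda}{\lambda-1},
\]
so that every monomial becomes a product of powers of $t/\theta$, of $\lambda$, and of $\lambda-1$; the bare factor $\lambda$ enters only through the combination $a+b-1$, which is the source of the $\lambda^{|\pi|-\mathrm{sg}(\pi)}$ appearing in the target.

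The substantive, and only genuinely delicate, step is the exponent bookkeeping. After multiplying by the dilation factor $\bigl(t(\lambda-1)\bigr)^n$, I must verify that the powers of $\lambda-1$ coming from $a$ and from $a+b-1$ cancel exactly against the $(\lambda-1)^n$ from the dilation, leaving no residual $\lambda-1$; that the powers of $\lambda$ collect into $\lambda^{|\pi|-\mathrm{sg}(\pi)}$; and that the powers of $t/\theta$, together with the overall $\theta^n$, collect into $\theta^n(t/\theta)^{|\pi|-\mathrm{dc}(\pi)}$. This last collection is where the structure of linked partitions is used: the degree identity $\sum_{V\in\pi}|V|=n+\mathrm{dc}(\pi)$ (each doubly-covered element lying in two blocks, every other element in exactly one) controls the total power of $t/\theta$. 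I expect the principal obstacle to be purely a matter of aligning conventions, namely the precise normalisation of $f\beta'$ and of the statistics $\mathrm{sg}(\pi),\mathrm{dc}(\pi)$ in \cite{Yos20} with ours, after which the claimed identity follows termwise over $\mathcal{NCL}(n)$. As a consistency check that fixes the normalisation unambiguously, one can compare the resulting $m_1,m_2$ against the S-transform of Lemma~\ref{prop_Strans_GFMG}.
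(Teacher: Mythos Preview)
Your proposal is correct and follows essentially the same route as the paper: the paper derives this corollary precisely by combining the dilation identity \eqref{eq:mu_freebetaprime_relation} with Yoshida's moment formula \cite[Theorem~6.1]{Yos20}, and your write-up simply makes the substitution and exponent bookkeeping explicit where the paper leaves it to the reader.
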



\subsection{Free beta prime distributions}
\label{sec6.2}

In the previous section, we observed that $\mu_{t,\theta,\lambda}$ is a suitably scaled free beta prime distribution when $\lambda>1$. Using the fact, we now investigate the free beta prime distribution $f\beta'(a,b)$ for any $a>0$ and $b>1$. A straightforward computation of the S-transform yields the following formula.

\begin{prop}\label{prop:formula:freebetaprime_freegamma}
For any $a>0$ and $b>1$, we have
$$
f\beta'(a,b)= \mu_{\frac{a}{b-1}, \frac{a}{(b-1)^2}, \frac{a+b-1}{a}}.
$$
\end{prop}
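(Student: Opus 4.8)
The plan is to establish the identity by comparing S-transforms, exploiting the fact that the S-transform is injective on $\calP(\R_{\ge0})\setminus\{\delta_0\}$ and that it linearizes the free multiplicative convolution $\boxtimes$ appearing in the definition $f\beta'(a,b)=\pi_{1,a}\boxtimes\pi_{1,b}^{\langle-1\rangle}$. This is the route suggested by the phrase ``a straightforward computation of the S-transform''.

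First I would compute the left-hand side directly. From Example \ref{ex_MPlaw} we have $S_{\pi_{1,a}}(z)=\frac{1}{a+z}$, while formula \eqref{eq:S-trans_inverse} together with Example \ref{ex:inverse_MP} gives $S_{\pi_{1,b}^{\langle-1\rangle}}(z)=b-1-z$. Multiplying these yields
\[
S_{f\beta'(a,b)}(z)=\frac{b-1-z}{a+z}.
\]
Next I would substitute the proposed parameters $t=\frac{a}{b-1}$, $\theta=\frac{a}{(b-1)^2}$, $\lambda=\frac{a+b-1}{a}$ into the formula of Lemma \ref{prop_Strans_GFMG}. The useful intermediate identities are $\frac{t}{\theta}=b-1$, hence $\theta(\lambda-1)=\frac{1}{b-1}$ and $t\big(t+\theta(\lambda-1)z\big)=\frac{a(a+z)}{(b-1)^2}$, while $t-\theta z=\frac{a(b-1-z)}{(b-1)^2}$; cancelling the common factor $\frac{a}{(b-1)^2}$ shows $S_{\mu_{t,\theta,\lambda}}(z)=\frac{b-1-z}{a+z}$, identical to the expression above. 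Since $a>0$ and $b>1$ guarantee $t,\theta>0$ and $\lambda>1$, the parameters are admissible and the target measure is well defined.

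The conceptual content is minimal; the only bookkeeping point will be to check that the two S-transforms share a common domain, so that injectivity can be invoked. This holds because both measures carry the same atom at the origin: $f\beta'(a,b)$ inherits the mass $\max\{0,1-a\}$ of $\pi_{1,a}$ under $\boxtimes$ (the other factor has no atom at $0$), while $\mu_{t,\theta,\lambda}(\{0\})=\max\{0,1-\tfrac{t}{\theta(\lambda-1)}\}=\max\{0,1-a\}$ by \eqref{eq:atom}; hence both S-transforms are defined on $(-1+\max\{0,1-a\},0)$ and agree there. As a cross-check that avoids the domain discussion altogether, I would note that the same conclusion drops out of \eqref{eq:mu_freebetaprime_relation}: with the chosen parameters the dilation factor is $t(\lambda-1)=1$ and the two free beta prime parameters are exactly $\frac{t}{\theta(\lambda-1)}=a$ and $1+\frac{t}{\theta}=b$, so $\mu_{t,\theta,\lambda}=D_{1}\big(f\beta'(a,b)\big)=f\beta'(a,b)$.
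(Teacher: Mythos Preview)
Your proof is correct and follows essentially the same route as the paper: both arguments compute $S_{f\beta'(a,b)}(z)=\frac{b-1-z}{a+z}$ and match it against the formula of Lemma~\ref{prop_Strans_GFMG}, the only cosmetic difference being that the paper \emph{solves} the resulting identity for $(t,\theta,\lambda)$ whereas you \emph{verify} it by substitution. Your added remarks on the common domain of the S-transforms and the shortcut via \eqref{eq:mu_freebetaprime_relation} are welcome bits of extra care that the paper omits.
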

\begin{proof}
It is easy to see that
\begin{align}\label{eq:Stras_fbetaprime}
S_{f\beta'(a,b)}(z)= \frac{b-1-z}{a+z} \qquad a>0, \ b>1.
\end{align}
To determine $t,\theta>0$ and $\lambda >1$ from $a>0$ and $b>1$, we compare the following equation
$$
\frac{b-1-z}{a+z} = \frac{t-\theta z}{t(t+\theta(\lambda-1)z)},
$$
where the RHS is the S-transform of some $\mu_{t,\theta,\lambda}$. Equivalently,
\begin{align*}
t(\lambda-1)=1, \quad -\theta a + t = -t^2 +t \theta(\lambda-1)(b-1) \quad \text{and} \quad ta  = t^2(b-1).
\end{align*}
Thus, one can see that $t=\frac{a}{b-1}$, $\theta= \frac{a}{(b-1)^2}$ and $\lambda=\frac{a+b-1}{a}$, as desired.
\end{proof}

From Proposition \ref{prop:formula:freebetaprime_freegamma}, we can identify analytic properties of the free beta prime distribution that were not discussed in \cite{Yos20}.

\begin{cor}\label{cor:freebetaprime_FSD}
Let us consider $a>0$ and $b>1$ as follows. Then
\begin{enumerate}[\rm (1)]
\setlength{\itemsep}{-\parsep}
\item The free L\'{e}vy measure of $f\beta'(a,b)$ is given by
$$
\frac{a}{b-1} \frac{k_{u,v}(x)}{x}\rmd x,
$$
where $u=\frac{a}{(b-1)^2}$ and $v=\frac{a+b-1}{a}$. Recall that $k_{u,v}(x)$ is the density function of the Marchenko-Pastur distribution $\pi_{u,v}$.
\item $f\beta'(a,b)$ is not freely selfdecomposable.
\item $f\beta'(a,b)$ is unimodal if and only if $a\ge 1$.
\end{enumerate}
\end{cor}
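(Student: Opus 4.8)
The plan is to deduce all three statements from the single identification established in Proposition~\ref{prop:formula:freebetaprime_freegamma}, which realizes $f\beta'(a,b)$ as a generalized Meixner-type free gamma distribution. Setting $t=\frac{a}{b-1}$, $\theta=\frac{a}{(b-1)^2}$, and $\lambda=\frac{a+b-1}{a}$, that proposition gives $f\beta'(a,b)=\mu_{t,\theta,\lambda}$, so each analytic property transfers directly from $\mu_{t,\theta,\lambda}$ to $f\beta'(a,b)$. Two bookkeeping identities will be used throughout: from the substitutions one reads off $\frac{t}{\theta}=b-1$, hence $1+t/\theta=b$, and $\lambda=1+\frac{b-1}{a}$. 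With these in hand, no new analysis is required; the proof is a translation of the earlier structural results into the $(a,b)$-parameters.

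For part (1), I would extract the free L\'evy measure directly from the defining R-transform. By Definition~\ref{def:GFMG}, $R_{\mu_{t,\theta,\lambda}}(z)=\int_{\R}\bigl(\frac{1}{1-zx}-1\bigr)\frac{t\,k_{\theta,\lambda}(x)}{x}\,\rmd x$, which matches the free L\'evy-Khintchine representation \eqref{eqno1} with free L\'evy measure $\frac{t\,k_{\theta,\lambda}(x)}{x}\,\rmd x$. The only point requiring mild care is that the compensator here ($-1$) differs from the one in \eqref{eqno1} ($-wx\mathbf{1}_{[-1,1]}$); since $b>1$ forces $\lambda>1$, the support $(a^-,a^+)$ of $k_{\theta,\lambda}$ is bounded and bounded away from $0$, so $\frac{t\,k_{\theta,\lambda}(x)}{x}\,\rmd x$ is a genuine L\'evy measure and the discrepancy contributes only a finite linear term absorbed into the drift $\gamma$. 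Substituting $u=\theta$ and $v=\lambda$ then yields precisely $\frac{a}{b-1}\frac{k_{u,v}(x)}{x}\,\rmd x$, as claimed.

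For part (2), I would invoke Proposition~\ref{prop_FSD}, according to which $\mu_{t,\theta,\lambda}$ is freely selfdecomposable if and only if $\lambda=1$. Since $a>0$ and $b>1$ give $\lambda=1+\frac{b-1}{a}>1$, the measure $f\beta'(a,b)$ fails to be freely selfdecomposable for every admissible pair $(a,b)$. For part (3), I would apply Proposition~\ref{prop:unimodality}, which characterizes unimodality of $\mu_{t,\theta,\lambda}$ by the double inequality $1\le\lambda\le 1+t/\theta$. The lower bound holds automatically; using $1+t/\theta=b$ and $\lambda=1+\frac{b-1}{a}$, the upper bound $1+\frac{b-1}{a}\le b$ reduces, after dividing by $b-1>0$, to $a\ge 1$. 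Hence $f\beta'(a,b)$ is unimodal exactly when $a\ge 1$. Since every step is a direct appeal to a prior result combined with the two substitution identities, I do not expect a genuine obstacle; the closest thing to a subtlety is the compensator comparison in part (1), and the rest is the elementary parameter algebra recorded above.
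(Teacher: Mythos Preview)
Your proposal is correct and follows essentially the same route as the paper: identify $f\beta'(a,b)$ with $\mu_{t,\theta,\lambda}$ via Proposition~\ref{prop:formula:freebetaprime_freegamma}, then read off the free L\'evy measure from Definition~\ref{def:GFMG} and transfer Propositions~\ref{prop_FSD} and~\ref{prop:unimodality} through the parameter substitutions. The paper's proof is terser (it does not spell out the compensator remark in part~(1) or the division by $b-1$ in part~(3)), but the logic is identical.
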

\begin{proof}
The free L\'{e}vy measure of $f\beta'(a,b)$ follows directly from the definition of $\mu_{t,\theta,\lambda}$ and Proposition \ref{prop:formula:freebetaprime_freegamma}. Since $\frac{a+b-1}{a}>1$, Proposition \ref{prop_FSD} together with Proposition \ref{prop:formula:freebetaprime_freegamma} implies that $f\beta'(a,b)= \mu_{\frac{a}{b-1}, \frac{a}{(b-1)^2}, \frac{a+b-1}{a}}$ is not freely selfdecomposable. Moreover, by Proposition \ref{prop:unimodality} and Proposition \ref{prop:formula:freebetaprime_freegamma}, the measure $f\beta'(a,b)$ is unimodal if and only if
$$
1 <  \frac{a+b-1}{a} \le 1+ \frac{\frac{a}{b-1}}{\frac{a}{(b-1)^2}} = b, 
$$
which is equivalent to $a\ge 1$.
\end{proof}


\section{Potential correspondence}
\label{sec5}

Let $t,\theta>0$ and $\lambda\ge1$ as follows. We consider the following potential function on $\R_{>0}$:
\begin{align*}
V_{t,\theta,\lambda} (x):= 
\begin{cases}
\left(2+\dfrac{t}{\theta}\right) \log x + \dfrac{t^2}{\theta x}, & \lambda=1,\\
\left(1-\dfrac{t}{\theta(\lambda-1)} \right) \log x + \left( 1 + \dfrac{t\lambda}{\theta(\lambda-1)}\right) \log (x+t(\lambda-1)), & \lambda>1.
\end{cases}
\end{align*}
One can verify that
$$
\calH \mu_{t,\theta,\lambda}(x) = \frac{1}{2}V_{t,\theta,\lambda}'(x), \qquad x \in [\alpha_-, \alpha_+]
$$
where $\alpha_{\pm}$ was defined in \eqref{eq:alpha^pm}. First, we analyze the Gibbs measure associated with $V_{t,\theta,\lambda}$ in order to investigate analogous properties for $\mu_{t,\theta,\lambda}$. The ultimate goal of this section is to determine whether $\mu_{t,\theta,\lambda}$ is the equilibrium measure of the free entropy associated with $V_{t,\theta,\lambda}$.

\subsection{Gibbs measure associated with potential}
\label{sec5.1}

We study the Gibbs measure associated with potential $V_{t,\theta,\lambda}$:
$$
\rho_{t,\theta,\lambda}({\rm d}x) = \frac{1}{\mathcal{Z}_{t,\theta,\lambda}} \exp\{ -V_{t,\theta,\lambda}(x)\} {\rm d}x \quad \text{for} \quad t,\theta>0, \ \lambda\ge1,
$$
where $\mathcal{Z}_{t,\theta,\lambda}$ is the normalized constant (i.e., the partition function). We first present an explicit formula for the partition function $\mathcal{Z}_{t,\theta,\lambda}$ as follows.
\begin{lem}\label{prop:Z}
Let us consider $t,\theta>0$ and $\lambda\ge1$. Then
\begin{align*}
\mathcal{Z}_{t,\theta,\lambda}= 
\begin{cases}
\left(\dfrac{t^2}{\theta}\right)^{-1-\frac{t}{\theta}} \Gamma\left(1+\dfrac{t}{\theta}\right), & \lambda=1\\
(t(\lambda+1))^{-\frac{t}{\theta}-1} B\left(\dfrac{t}{\theta(\lambda-1)}, 1+\dfrac{t}{\theta}\right), & \lambda>1.
\end{cases}
\end{align*}
\end{lem}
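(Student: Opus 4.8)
The plan is to compute the normalizing integral $\mathcal{Z}_{t,\theta,\lambda}=\int_{0}^{\infty}\exp\{-V_{t,\theta,\lambda}(x)\}\,{\rm d}x$ directly by reducing it, in each case, to a standard special-function integral (the Gamma integral when $\lambda=1$, the Beta integral when $\lambda>1$) via an elementary substitution.

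\emph{Case $\lambda=1$.} First I would exponentiate the potential to obtain
\[
\exp\{-V_{t,\theta,1}(x)\} = x^{-(2+\frac{t}{\theta})}\,e^{-\frac{t^2}{\theta x}},
\]
so that $\mathcal{Z}_{t,\theta,1}=\int_{0}^{\infty} x^{-(2+\frac{t}{\theta})} e^{-\frac{t^2}{\theta x}}\,{\rm d}x$. The natural move is the inversion substitution $u=1/x$, giving ${\rm d}x=-u^{-2}\,{\rm d}u$ and turning the integrand into $u^{\frac{t}{\theta}}e^{-\frac{t^2}{\theta}u}$ after the $u^{-2}$ is absorbed. This is precisely a Gamma integral $\int_{0}^{\infty} u^{s-1}e^{-cu}\,{\rm d}u = c^{-s}\Gamma(s)$ with $s=1+\frac{t}{\theta}$ and $c=\frac{t^2}{\theta}$, which yields $\left(\frac{t^2}{\theta}\right)^{-1-\frac{t}{\theta}}\Gamma(1+\frac{t}{\theta})$, as claimed.

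\emph{Case $\lambda>1$.} Here exponentiating gives
\[
\exp\{-V_{t,\theta,\lambda}(x)\}
= x^{-1+\frac{t}{\theta(\lambda-1)}}\bigl(x+t(\lambda-1)\bigr)^{-1-\frac{t\lambda}{\theta(\lambda-1)}}.
\]
The plan is to rescale by the shift $t(\lambda-1)$ appearing in the second factor: setting $x=t(\lambda-1)y$ turns the integral into $t(\lambda-1)$ times a power of $t(\lambda-1)$ multiplying $\int_{0}^{\infty} y^{a-1}(1+y)^{-a-b}\,{\rm d}y$, which is exactly the Beta-function integral $B(a,b)=\int_{0}^{\infty} y^{a-1}(1+y)^{-a-b}\,{\rm d}y$. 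One reads off $a=\frac{t}{\theta(\lambda-1)}$ from the exponent of $y$, and the pairing of exponents forces $a+b=1+\frac{t\lambda}{\theta(\lambda-1)}$, hence $b=1+\frac{t}{\theta}$. Collecting the powers of $t(\lambda-1)$ should reproduce the prefactor $(t(\lambda+1))^{-\frac{t}{\theta}-1}$ stated in the lemma. The convergence requirement $a>0$ holds automatically, and $b>0$ holds as well, so both special-function integrals are legitimate.

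The computation is essentially routine, so there is no deep obstacle; the one place demanding care is the exact tracking of the prefactor powers of $t(\lambda-1)$ in the $\lambda>1$ case, since a small arithmetic slip there is easy to make. I would verify consistency by checking that, as $\lambda\to 1^{+}$, the Beta-function expression degenerates in a manner compatible with the Gamma-function formula of the $\lambda=1$ case, and by confirming that the exponent bookkeeping in the substitution reproduces precisely the density formulas $\rho_{t,\theta,\lambda}$ displayed earlier in the introduction.
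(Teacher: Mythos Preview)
Your approach is correct and essentially the same as the paper's: both cases are handled by a change of variable reducing to the standard Gamma (for $\lambda=1$) and Beta (for $\lambda>1$) integrals. The only cosmetic difference is that for $\lambda>1$ the paper first inverts $x\mapsto 1/u$ and then rescales $v=t(\lambda-1)u$, whereas you rescale $x=t(\lambda-1)y$ in a single step; both arrive at $\int_0^\infty y^{a-1}(1+y)^{-a-b}\,\rmd y=B(a,b)$ with $a=\tfrac{t}{\theta(\lambda-1)}$ and $b=1+\tfrac{t}{\theta}$.

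One remark on your bookkeeping check: when you actually collect the powers of $t(\lambda-1)$, you will obtain the prefactor $(t(\lambda-1))^{-1-t/\theta}$, not $(t(\lambda+1))^{-1-t/\theta}$ as printed in the lemma. The lemma statement (and the paper's own proof) carries a typographical slip here; the correct constant is $(t(\lambda-1))^{-1-t/\theta}$, as is confirmed by the density formula \eqref{eq:rho_general} displayed immediately afterward and by your proposed $\lambda\to 1^+$ consistency check, which would fail with the $(\lambda+1)$ version.
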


\begin{proof}
A simple computation leads to the desired results. Actually, if $\lambda=1$, then
\begin{align*}
\calZ_{t,\theta,\lambda} 
&= \int_0^\infty x^{-(2+\frac{t}{\theta})} e^{-\frac{t^2}{\theta x}}{\rm d}x\\
&= \left(\frac{t^2}{\theta}\right)^{-1-\frac{t}{\theta}} \int_0^\infty u^{\frac{t}{\theta}}e^{-u}{\rm d}u \qquad \text{(by putting $u= t^2/(\theta x)$)}\\
&=\left(\frac{t^2}{\theta}\right)^{-1-\frac{t}{\theta}} \Gamma \left(1+\frac{t}{\theta}\right).
\end{align*}
If $\lambda>1$, then
\begin{align*}
\calZ_{t,\theta,\lambda}
&= \int_0^\infty x^{-1 +\frac{t}{\theta(\lambda-1)}} \left(x +t(\lambda-1)\right)^{-1-\frac{t\lambda}{\theta(\lambda-1)}}{\rm d}x\\
&=\int_0^\infty u^{\frac{t}{\theta}} \left(1+t(\lambda-1)u\right)^{-1- \frac{t\lambda}{\theta(\lambda-1)}} {\rm d}u \qquad \text{($x=1/u$)}\\
&=\left(t(\lambda+1)\right)^{-1-\frac{t}{\theta}} \int_0^\infty v^{\frac{t}{\theta}} (1+v)^{-1- \frac{t\lambda}{\theta(\lambda-1)}} {\rm d}v \qquad \text{($v=t(\lambda-1)u$)}\\
&=\left(t(\lambda+1)\right)^{-1-\frac{t}{\theta}} B\left(1+\frac{t}{\theta}, \frac{t}{\theta(\lambda-1)}\right).
\end{align*}
By symmetry of beta functions, we obtain the desired result.
\end{proof}
By Lemma \ref{prop:Z}, the Gibbs measure $\rho_{t,\theta,\lambda}$ has the following form:
\begin{align}
\rho_{t,\theta,1} (\rmd x) &=\frac{(\frac{t^2}{\theta})^{1+\frac{t}{\theta}}}{\Gamma(1+\frac{t}{\theta})} x^{-(2+ \frac{t}{\theta})} e^{-\frac{t^2}{\theta x}} {\rm d}x, \label{eq:rho_1}\\
\rho_{t,\theta,\lambda}(\rmd x) &=\frac{(t(\lambda-1))^{\frac{t}{\theta}+1}}{B(\frac{t}{\theta(\lambda-1)},1+\frac{t}{\theta})} x^{-1 + \frac{t}{\theta(\lambda-1)}} \left(x+t(\lambda-1)\right)^{-1-\frac{t\lambda}{\theta(\lambda-1)}}{\rm d}x, \quad \lambda>1 \label{eq:rho_general}.
\end{align}

From the above representation \eqref{eq:rho_1}, the measure $\rho_{t,\theta,1}$ is the inverse gamma distribution. More precisely, 
\begin{align}\label{eq:rho_inversegamma_formula}
\rho_{t,\theta,1} = \gamma\left(1+\frac{t}{\theta},\frac{\theta}{t^2}\right)^{\langle-1\rangle},
\end{align}
where $\gamma(a,b)^{\langle -1\rangle}$ is defined by
$$
\gamma(a,b)^{\langle -1\rangle} = \frac{1}{b^a \Gamma(a)} x^{-a-1} e^{-\frac{1}{bx}} \rmd x, \qquad a,b>0.
$$
It is known that the class $\{\gamma(a,b)^{\langle-1\rangle}:a,b>0\}$ includes the positive $1/2$-classical stable law $\sqrt{\frac{c}{2\pi}} x^{-\frac{3}{2}} e^{-\frac{c}{2x}}{\rm d}x$, $c>0$ (it is also called L{\'e}vy distribution), but the measure $\rho_{t,\theta,1}$ cannot be the positive $1/2$-classical stable law for any $t,\theta>0$.

For $\lambda>1$, the formula \eqref{eq:rho_general} implies that
\begin{align}
\rho_{t,\theta,\lambda} &=  D_{t(\lambda-1)}\left( \beta' \left(\frac{t}{\theta(\lambda-1)}, 1+\frac{t}{\theta} \right)\right) \label{eq:rho_betaprime} \\
&=D_{t(\lambda-1)} \left( \gamma\left(\frac{t}{\theta(\lambda-1)},1\right) \circledast \gamma\left(1+\frac{t}{\theta},1\right)^{\langle -1\rangle}\right), \label{eq:rho_decom}
\end{align}
where $\beta'(a,b)$ is the beta prime distribution, that is, 
$$
\beta'(a,b) := \gamma(a,1)\circledast \gamma(b,1)^{\langle-1\rangle} = \frac{1}{B(a,b)} x^{-1+a} (1+x)^{-a-b} \rmd x, \qquad a,b>0.
$$
The equation \eqref{eq:rho_decom} is obtained by replacing $\boxtimes$, $\pi_{1,a}$ and $\mu_{t,\theta,\lambda}$ in the equation \eqref{eq:mu_freebetaprime} of Theorem \ref{thm:formula_free_multiplicative} with $\circledast$, $\gamma(a,1)$ and $\rho_{t,\theta,\lambda}$, respectively. Using $\gamma(a,b) = D_b(\gamma(a,1))$ for $a,b>0$, and equations \eqref{eq:rho_inversegamma_formula} and \eqref{eq:rho_decom}, we get
\begin{align*}
\rho_{t,\theta,\lambda} = \rho_{t,\theta,1} \circledast \gamma\left(\frac{t}{\theta(\lambda-1)}, \frac{\theta(\lambda-1)}{t}\right).
\end{align*}
This formula can be interpreted as the classical counterpart of the equation \eqref{eq:mu_anotherrep}. In particular, if we put $\lambda=1+t/\theta$, then
$$
\rho_{t,\theta,1+t/\theta}=\rho_{t,\theta,1} \circledast \gamma(1,1).
$$
The measure $\rho_{t,\theta, 1+t/\theta}$ belongs to the ME (mixture of exponential distributions), that is, the distribution of the form $EZ$, where $E,Z$ are independent random variables such that $E$ is distributed as some exponential distribution and $Z\ge 0$, see \cite{Gol67,Ste67} for details. According to \cite{B92} and \cite{IK79}, the inverse gamma distributions and the beta prime distributions are classically selfdecomposable. Thus, for any $t,\theta>0$ and $\lambda\ge1$, the measure $\rho_{t,\theta,\lambda}$ is selfdecomposable. 

Summarizing the discussion so far, we arrive at the following theorem.
\begin{thm}[Convolution formula for $\rho_{t,\theta,\lambda}$]\label{thm:rho_multiformula}
Let us consider $t,\theta>0$ and $\lambda > 1$. Then we obtain
\begin{align*}
\rho_{t,\theta, \lambda} &= D_{t(\lambda-1)} \left( \gamma\left(\frac{t}{\theta(\lambda-1)},1\right) \circledast \gamma\left(1+\frac{t}{\theta},1\right)^{\langle -1\rangle}\right)\\
&=\rho_{t,\theta,1} \circledast \gamma\left(\frac{t}{\theta(\lambda-1)}, \frac{\theta(\lambda-1)}{t}\right),
\end{align*}
and $\rho_{t,\theta,\lambda}$ is selfdecomposable. In particular, $\rho_{t,\theta, 1+t/\theta}= \rho_{t,\theta,1} \circledast \gamma(1,1)$, and hence it belongs to the ME.
\end{thm}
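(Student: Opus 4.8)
The plan is to read both convolution identities directly off the explicit density of $\rho_{t,\theta,\lambda}$ recorded in \eqref{eq:rho_general}, and then to deduce selfdecomposability and the ME membership from known classical facts together with scaling invariance. Almost all of the analytic content has already been assembled in the paragraphs preceding the theorem, so the proof is largely a matter of organizing these ingredients and tracking the dilation constants correctly.

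First I would establish the first displayed identity. Starting from the density \eqref{eq:rho_general} and comparing it with the beta prime density $\beta'(a,b)(\dd x)=\frac{1}{B(a,b)} x^{-1+a}(1+x)^{-a-b}\,\dd x$, the substitution $x\mapsto x/(t(\lambda-1))$ identifies $\rho_{t,\theta,\lambda}$ as the dilation $D_{t(\lambda-1)}\bigl(\beta'(\tfrac{t}{\theta(\lambda-1)},1+\tfrac{t}{\theta})\bigr)$, which is precisely \eqref{eq:rho_betaprime}. Substituting the classical multiplicative representation $\beta'(a,b)=\gamma(a,1)\circledast\gamma(b,1)^{\langle -1\rangle}$ (recorded above the theorem) into this dilation then yields \eqref{eq:rho_decom}, namely the first line.

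The second identity is pure bookkeeping with dilations. I would use three elementary rules: $\circledast$ is commutative; a dilation can be absorbed into either factor, so that $D_{c_1}(A)\circledast D_{c_2}(B)=D_{c_1 c_2}(A\circledast B)$; and dilation commutes with reversal via $D_b(\mu)^{\langle -1\rangle}=D_{1/b}(\mu^{\langle -1\rangle})$. Writing $\gamma(\tfrac{t}{\theta(\lambda-1)},\tfrac{\theta(\lambda-1)}{t})=D_{\theta(\lambda-1)/t}\bigl(\gamma(\tfrac{t}{\theta(\lambda-1)},1)\bigr)$ and recalling from \eqref{eq:rho_inversegamma_formula} that $\rho_{t,\theta,1}=\gamma(1+\tfrac{t}{\theta},\tfrac{\theta}{t^2})^{\langle -1\rangle}=D_{t^2/\theta}\bigl(\gamma(1+\tfrac{t}{\theta},1)^{\langle -1\rangle}\bigr)$, I would collect the two scaling constants into the single factor $D_{(\theta(\lambda-1)/t)\cdot(t^2/\theta)}=D_{t(\lambda-1)}$ and, after reordering the factors by commutativity, recover exactly the first line. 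The place where care is required is precisely this constant tracking: the scaling factors $\theta/t^2$ and $\theta(\lambda-1)/t$ must multiply to $t(\lambda-1)$, and the reversal identity must be applied with the reciprocal exponent, so I expect this to be the only genuine source of error in the argument.

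Finally, for selfdecomposability I would invoke the classical results of \cite{B92,IK79} that inverse gamma and beta prime distributions are selfdecomposable, together with the fact that selfdecomposability is preserved under dilation $D_c$ for $c>0$; since $\rho_{t,\theta,\lambda}$ is, for every $\lambda\ge 1$, a dilation of such a law (an inverse gamma when $\lambda=1$ by \eqref{eq:rho_inversegamma_formula}, a beta prime when $\lambda>1$ by \eqref{eq:rho_betaprime}), it is selfdecomposable. For the ME assertion, setting $\lambda=1+t/\theta$ forces $\tfrac{t}{\theta(\lambda-1)}=1$ and $\tfrac{\theta(\lambda-1)}{t}=1$, so the gamma factor collapses to $\gamma(1,1)$, the standard exponential law; the representation $\rho_{t,\theta,1+t/\theta}=\rho_{t,\theta,1}\circledast\gamma(1,1)$ then exhibits the measure as the law of a product $EZ$ with $E\sim\gamma(1,1)$ exponential and $Z\sim\rho_{t,\theta,1}$ nonnegative and independent, which is by definition a mixture of exponential distributions.
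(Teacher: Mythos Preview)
Your proposal is correct and follows essentially the same approach as the paper: the theorem is stated as a summary of the discussion in Section~\ref{sec5.1}, and your argument reproduces exactly that discussion---identifying $\rho_{t,\theta,\lambda}$ as a dilated beta prime via \eqref{eq:rho_general}, unpacking the $\beta'$ representation, tracking the dilation constants through \eqref{eq:rho_inversegamma_formula}, and invoking \cite{B92,IK79} for selfdecomposability. (One tiny slip: in your closing remark about ``constant tracking'' you wrote $\theta/t^2$ where you meant $t^2/\theta$, but you had the correct value in the main computation.)
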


\begin{rem}\label{rem:Pearson}
Let $p_{t,\theta, \lambda}$ be the density function of the measure $\rho_{t,\theta,\lambda}$. We obtain the following ordinary differential equations:
\begin{align*}
\frac{p_{t,\theta,1}'(x)}{p_{t,\theta,1}(x)}+ \frac{x-\frac{t^2}{2\theta+t}}{\frac{\theta}{2\theta+t}x^2}=0.
\end{align*}
and
\begin{align*}
\frac{p_{t,\theta,\lambda}'(x)}{p_{t,\theta,\lambda}(x)} + \frac{(x+ \frac{t(\lambda-1)}{2}) - \frac{t^2(\lambda+1)}{2(2\theta+t)}}{\frac{\theta}{2\theta+t} (x+ \frac{t(\lambda-1)}{2})^2 - \frac{t^2\theta(\lambda-1)^2}{4(2\theta+t)}} = 0, \qquad \lambda>1.
\end{align*}
Thus, we see that the family $\{\rho_{t,\theta,\lambda}: t,\theta>0, \lambda\ge1\} \subset \calP(\R_{\ge0})$ coincides with a subfamily of {\it Pearson distributions} (see cf. \cite[Page 381]{P1895}). 
\end{rem}


\subsection{Equilibrium measure of free entropy}
\label{sec5.2}

In this section, we investigate the maximizer of free entropy associated with the potential $V_{t,\theta,\lambda}$.

\begin{thm}[Maximizer of free entropy]\label{thm:free_entropy}
For each $t,\theta>0$ and $1 \le \lambda <1+t/\theta$, the measure $\mu_{t,\theta,\lambda}$ is a unique maximizer of 
$$
\Sigma_{V_{t,\theta,\lambda}(}\mu):= \iint_{\R_{>0}\times \R_{>0}} \log |x-y|\mu(\rmd x) \mu(\rmd y) -\int_{\R_{>0}} V_{t,\theta,\lambda}(x)\mu(\rmd x),
$$ 
among all $\mu \in \calP(\R_{>0})$.
\end{thm}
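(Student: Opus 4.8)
The plan is to recognize $\Sigma_{V_{t,\theta,\lambda}}$ as a weighted logarithmic-energy functional on the half-line and to verify that $\mu_{t,\theta,\lambda}$ satisfies Frostman's (Euler--Lagrange) conditions, which are sufficient to identify the equilibrium measure. First I would check that $V_{t,\theta,\lambda}$ is an admissible weight on $\R_{>0}$: this requires lower semicontinuity (clear), a confining growth at infinity, and confinement near the boundary point $0$. At infinity one computes that the sum of the two logarithmic coefficients equals $2+t/\theta$, so $V_{t,\theta,\lambda}(x)-2\log x\sim(t/\theta)\log x\to+\infty$; near $0$ either the term $t^2/(\theta x)$ (when $\lambda=1$) or the negative coefficient $1-\tfrac{t}{\theta(\lambda-1)}<0$ of $\log x$ (negative precisely because $1\le\lambda<1+t/\theta$) forces $V_{t,\theta,\lambda}(x)\to+\infty$ as $x\to0^+$. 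Together with $\calZ_{t,\theta,\lambda}<\infty$ (Lemma~\ref{prop:Z}), this places us in the setting of \cite{ST97} (see also \cite{Joh98, Bia03}): the energy $-\Sigma_{V_{t,\theta,\lambda}}$ is strictly convex and admits a unique minimizer (equilibrium measure), whose compact support lies in $\R_{>0}$.

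It then suffices to show $\mu_{t,\theta,\lambda}$ is that equilibrium measure by verifying Frostman's conditions for the effective potential
$$
U(x):=V_{t,\theta,\lambda}(x)-2\int_{\R_{>0}}\log|x-y|\,\mu_{t,\theta,\lambda}(\rmd y),
$$
namely $U\equiv\ell$ on $\operatorname{supp}(\mu_{t,\theta,\lambda})$ and $U\ge\ell$ on the remainder of $\R_{>0}$, for some constant $\ell$. By \eqref{eq:atom} the measure has no atom when $1\le\lambda<1+t/\theta$, so its support is exactly $[\alpha^-,\alpha^+]\subset(0,\infty)$. On the support $U'(x)=V_{t,\theta,\lambda}'(x)-2\calH\mu_{t,\theta,\lambda}(x)$ vanishes by the identity $\calH\mu_{t,\theta,\lambda}=\tfrac12 V_{t,\theta,\lambda}'$ recorded at the start of Section~\ref{sec5}; hence $U$ is constant $=\ell$ there, and the equality condition holds.

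The main work is the off-support inequality. Writing the Cauchy transform from Section~\ref{sec3.1} as a rational part plus a square-root part,
$$
G_{\mu_{t,\theta,\lambda}}(z)=R(z)-\frac{t\sqrt{(z-\alpha^-)(z-\alpha^+)}}{2\theta z(z+t(\lambda-1))},\qquad R(z):=\frac{(t+2\theta)z-t(t-\theta(\lambda-1))}{2\theta z(z+t(\lambda-1))},
$$
the Hilbert-transform identity forces the equality of rational functions $R(x)=\tfrac12 V_{t,\theta,\lambda}'(x)$. Consequently, for $x\in\R_{>0}\setminus[\alpha^-,\alpha^+]$, where $G_{\mu_{t,\theta,\lambda}}$ is real-valued,
$$
U'(x)=V_{t,\theta,\lambda}'(x)-2G_{\mu_{t,\theta,\lambda}}(x)=\frac{t\sqrt{(x-\alpha^-)(x-\alpha^+)}}{\theta x(x+t(\lambda-1))},
$$
with the branch continued from $\C^+$. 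Since $\lambda\ge1$, the prefactor $\frac{t}{\theta x(x+t(\lambda-1))}$ is positive throughout $\R_{>0}$, so the sign of $U'$ is governed entirely by the branch of the square root: it is positive for $x>\alpha^+$ and negative for $0<x<\alpha^-$. Hence $U$ is strictly increasing on $(\alpha^+,\infty)$ and strictly decreasing on $(0,\alpha^-)$, giving $U(x)>\ell$ off the support. This establishes Frostman's conditions, so $\mu_{t,\theta,\lambda}$ is the unique maximizer. I expect the branch and sign bookkeeping of the square root, together with the care needed to transplant the equilibrium theory from $\R$ to the half-line $\R_{>0}$, to be the only delicate points.

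Finally, the free beta prime statement follows by specialization: substituting $t=\frac{a}{b-1}$, $\theta=\frac{a}{(b-1)^2}$, $\lambda=\frac{a+b-1}{a}$ from Proposition~\ref{prop:formula:freebetaprime_freegamma} yields $t(\lambda-1)=1$, whence $V_{t,\theta,\lambda}(x)=(1-a)\log x+(a+b)\log(1+x)=V_{a,b}(x)$, while the constraint $1<\lambda<1+t/\theta$ is equivalent to $a,b>1$. Thus $f\beta'(a,b)=\mu_{t,\theta,\lambda}$ is the unique maximizer of $\Sigma_{V_{a,b}}$.
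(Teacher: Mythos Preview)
Your argument is correct and takes a genuinely different route from the paper's proof. The paper proceeds constructively: after invoking \cite{Joh98} for existence and uniqueness of the equilibrium measure $\mu_V$ supported on some $[a,b]\subset\R_{>0}$, it determines $a,b$ by solving the singular integral equations of \cite[Theorem~IV.1.11]{ST97} (appealing to \cite[Theorem~1]{Fer06} when $\lambda=1$), finds $a=\alpha^-$, $b=\alpha^+$, and then recovers the density via the principal-value formula of \cite[Theorem~IV.3.1]{ST97}, checking that it coincides with the known density \eqref{eq:density} of $\mu_{t,\theta,\lambda}$. By contrast, you start from the candidate $\mu_{t,\theta,\lambda}$ and verify the Frostman (Euler--Lagrange) variational conditions directly, exploiting the explicit Cauchy transform from Section~\ref{sec3.1}: the decomposition $G_{\mu_{t,\theta,\lambda}}=R-(\text{square-root part})$ with $R=\tfrac12 V_{t,\theta,\lambda}'$ immediately yields both the equality on $[\alpha^-,\alpha^+]$ (since the square-root part is purely imaginary there) and the strict off-support inequality via the sign of the analytic continuation of the root. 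Your approach is shorter and avoids all the singular-integral computations; the paper's approach, while heavier, has the advantage of being constructive and of showing how the support $[\alpha^-,\alpha^+]$ is forced by the potential rather than assumed. The branch/sign analysis you flag is indeed the only delicate point, and it works out exactly as you say under the paper's convention $\arg\in(0,2\pi)$.
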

\begin{proof}
Thanks to the theory of the energy problem in \cite{Joh98}, the existence and uniqueness for the maximizer $\mu_V$ of $\Sigma_{V_{t,\theta,\lambda}}$ are guaranteed. The rest of proof is to conclude that $\mu_V=\mu_{t,\theta,\lambda}$. Since $V_{t,\theta,\lambda}$ is regular enough  (see \cite[Section 4.2]{Fer08} and \cite[Section 2]{Fer06}), $\mu_V$ has the density $\Phi_V=\frac{\rmd\mu_V}{\rmd x}$ and is compactly supported on $\R_{>0}$, denoted by $[a,b]$ for some $0<a<b$. We divide two cases for $\lambda$ as follows.

\underline{\bf Case of $\lambda=1$}: By definition of $V_{t,\theta,1}$, we can apply \cite[Theorem 1]{Fer06} in the case of $\lambda=-1-t/\theta$, $\alpha=0$ and $\beta=t^2/\theta$ to the points $a,b$ and the density $\Phi_V$. Then $0<a<b$ satisfy
$$
2+\frac{t}{\theta}-\frac{t^2}{\theta}\cdot \frac{a+b}{2ab}=0 \quad \text{and} \quad \sqrt{ab}=t.
$$
Thus we have
\begin{align*}
a= 2\theta+t -2\sqrt{\theta(\theta+t)} = \alpha^- \quad \text{and} \quad b=2\theta+t + 2\sqrt{\theta(\theta+t)} = \alpha^+,
\end{align*}
where $\alpha^\pm$ is defined in \eqref{eq:alpha^pm} for $\lambda=1$. Moreover,
\begin{align*}
\Phi_V(x) &=\frac{1}{2\pi} \sqrt{(x-a)(b-x)} \cdot \frac{\beta}{\sqrt{ab}x^2} \mathbf{1}_{[a,b]}(x)\\
&= \frac{t\sqrt{(x-\alpha^-)(\alpha^+-x)} }{2\pi \theta x^2}  \mathbf{1}_{[\alpha^-,\alpha^+]}(x) = \frac{\rmd\mu_{t,\theta,1}}{\rmd x}(x) ,
\end{align*}
as desired.

\underline{\bf Case of $1<\lambda < 1+t/\theta$}: By \cite[Theorem IV. 1.11]{ST97}, the points $a$ and $b$ satisfy the following singular integral equations:
\begin{align}\label{eq:SIeq}
\frac{1}{\pi} \int_a^b \frac{V_{t,\theta,\lambda}'(x)}{\sqrt{(b-x)(x-a)}} \rmd x=0 \quad \text{and} \quad \frac{1}{\pi} \int_a^b \frac{xV_{t,\theta,\lambda}'(x)}{\sqrt{(b-x)(x-a)}} \rmd x=2.
\end{align}
By using the formulas
$$
\int_a^b \frac{1}{x \sqrt{(b-x)(x-a)} }\rmd x = \frac{\pi}{\sqrt{ab}}\quad \text{and} \quad \int_a^b \frac{1}{\sqrt{(b-x)(x-a)} }\rmd x =\pi,
$$
we obtain
\begin{align*}
\frac{1}{\pi} &\int_a^b \frac{V_{t,\theta,\lambda}'(x)}{\sqrt{(b-x)(x-a)}} \rmd x \\
&= \frac{1}{\pi} \int_a^b \frac{1-\frac{t}{\theta(\lambda-1)}}{x \sqrt{(b-x)(x-a)}}\rmd x +  \frac{1}{\pi} \int_a^b \frac{1+\frac{t\lambda}{\theta(\lambda-1)}}{(x+t(\lambda-1) )\sqrt{(b-x)(x-a)}}\rmd x \\
&= \left(1-\frac{t}{\theta(\lambda-1)} \right) \frac{1}{\sqrt{ab}} + \left(1+\frac{t\lambda}{\theta(\lambda-1)}\right) \frac{1}{\sqrt{(a+t(\lambda-1))(b+t(\lambda-1))}}.
\end{align*}
Moreover, we have
\begin{align*}
\frac{1}{\pi} & \int_a^b \frac{xV_{t,\theta,\lambda}'(x)}{\sqrt{(b-x)(x-a)}} \rmd x\\
&= \frac{1}{\pi} \int_a^b \frac{1-\frac{t}{\theta(\lambda-1)}}{\sqrt{(b-x)(x-a)}}\rmd x +  \frac{1}{\pi} \int_a^b \frac{\left(1+\frac{t\lambda}{\theta(\lambda-1)}\right)x}{(x+t(\lambda-1) )\sqrt{(b-x)(x-a)}}\rmd x \\
&= \left(1-\frac{t}{\theta(\lambda-1)}\right) +\left(1+\frac{t\lambda}{\theta(\lambda-1)}\right)\left\{1- \frac{t(\lambda-1)}{\sqrt{(a+t(\lambda-1))(b+t(\lambda-1))}}\right\}\\
&=2+\frac{t}{\theta} -\left(1+\frac{t\lambda}{\theta(\lambda-1)}\right)\frac{t(\lambda-1)}{\sqrt{(a+t(\lambda-1))(b+t(\lambda-1))}}.
\end{align*}
Therefore, the equations \eqref{eq:SIeq} imply that
\begin{align}\label{eq:double}
\begin{cases}
 \left(1+\dfrac{t\lambda}{\theta(\lambda-1)}\right) \dfrac{1}{\sqrt{(a+t(\lambda-1))(b+t(\lambda-1))}} = \left(\dfrac{t}{\theta(\lambda-1)} -1\right) \dfrac{1}{\sqrt{ab}} \\
\left(1+\dfrac{t\lambda}{\theta(\lambda-1)}\right)\dfrac{1}{\sqrt{(a+t(\lambda-1))(b+t(\lambda-1))}} = \dfrac{1}{\theta(\lambda-1)}.
\end{cases}
\end{align}
By solving the above equations, we have
\begin{align}\label{eq:coef_solution}
a+b = 2(\theta(\lambda+1)+t)\quad \text{and} \quad ab = (\theta(\lambda-1)-t)^2.
\end{align}
This implies that $a,b$ are the solution of $(z-\alpha^+)(z-\alpha^-)=0$, and hence $a= \alpha^-$ and $b=\alpha^+$. 

By using the formula
$$
\text{p.v}\left(\frac{1}{\pi} \int_a^b \frac{1}{u\sqrt{(u-a)(b-u)}} \frac{{\rmd u}}{u-x} \right) =-\frac{1}{\sqrt{ab}x},
$$
we get
\begin{align*}
\text{p.v.}&  \left(\frac{1}{\pi}\int_{\alpha^-}^{\alpha^+} \frac{V_{t,\theta,\lambda}'(u)}{\sqrt{(u-\alpha^-)(\alpha^+-u)}} \frac{\rmd u}{u-x}\right)\\
&=\left(1-\frac{t}{\theta(\lambda-1)}\right)\text{p.v.} \left(\frac{1}{\pi}\int_{\alpha^-}^{\alpha^+} \frac{1}{u\sqrt{(u-\alpha^-)(\alpha^+-u)}} \frac{\rmd u}{u-x}\right)\\
&\hspace{6mm}+\left(1+\frac{t\lambda}{\theta(\lambda-1)}\right) \text{p.v.} \left(\frac{1}{\pi}\int_{\alpha^-}^{\alpha^+} \frac{1}{(u+t(\lambda-1))\sqrt{(u-\alpha^-)(\alpha^+-u)}} \frac{\rmd u}{u-x}\right)\\
&=-\left(1-\frac{t}{\theta(\lambda-1)}\right)\frac{1}{\sqrt{\alpha^+\alpha^-} x}\\
&\hspace{6mm} -\left(1+\frac{t\lambda}{\theta(\lambda-1)}\right) \frac{1}{\sqrt{(\alpha^-+t(\lambda-1))(\alpha^++t(\lambda-1))}}\cdot \frac{1}{x+t(\lambda-1)}\\
&=\frac{1}{\theta(\lambda-1)x} -\frac{1}{\theta(\lambda-1)}\cdot\frac{1}{x+t(\lambda-1)} \qquad \text{(by \eqref{eq:double} and \eqref{eq:coef_solution})}\\
&=\frac{t}{\theta x (x+t(\lambda-1))}.
\end{align*}
From \cite[Theorem IV. 3.1]{ST97}, we finally obtain
\begin{align*}
\Phi_V(x)&=\frac{1}{2\pi} \sqrt{(x-\alpha^-)(\alpha^+ -x)} \times \text{p.v.}  \left( \frac{1}{\pi} \int_{\alpha^-}^{\alpha^+} \frac{V_{t,\theta,\lambda}'(u)}{\sqrt{(u-\alpha^-)(\alpha^+-u)}} \frac{\rmd u}{u-x}\right)\\
&= \frac{t\sqrt{(x-\alpha^-)(\alpha^+ -x)} }{2\pi \theta x (x+t(\lambda-1))} = \frac{\rmd \mu_{t,\theta,\lambda}}{\rmd x}(x),
\end{align*}
as desired.
\end{proof}

\begin{rem}
From the proof of the above theorem, we observe that the class $\{\mu_{t,\theta,1}:t,\theta>0\}$ forms a special subclass of the free GIG distributions (see \cite{Fer06, HS19}).
\end{rem}

Due to Theorem \ref{thm:free_entropy}, the potential correspondence maps the measure $\rho_{t,\theta,\lambda}$ to the measure $\mu_{t,\theta,\lambda}$ for all $t,\theta>0$ and $1\le \lambda <1+t/\theta$. In particular, the potential correspondence maps the beta prime distributions $\beta'(a,b)$ to the free ones $f\beta'(a,b)$ for all $a,b>1$, due to Proposition \ref{prop:formula:freebetaprime_freegamma} and \eqref{eq:rho_betaprime}. Consequently, we obtain the following result for free beta prime distributions.
\begin{cor}
\label{cor:freebetaprime_entropy}
Let us consider $a,b>1$. The measure $f\beta'(a,b)$ is the unique maximizer of the free entropy $\Sigma_{V_{a,b}}(\mu)$ among probability measures $\mu$ on $\R_{>0}$, where
$$
V_{a,b}(x) = (1-a) \log x + (a+b) \log(1+x), \qquad x>0.
$$
\end{cor}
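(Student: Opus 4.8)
The plan is to deduce the corollary directly from Theorem~\ref{thm:free_entropy}, using Proposition~\ref{prop:formula:freebetaprime_freegamma} to translate the beta-prime parameters $(a,b)$ into the coordinates $(t,\theta,\lambda)$. Given $a,b>1$, I would set
$$
t = \frac{a}{b-1}, \qquad \theta = \frac{a}{(b-1)^2}, \qquad \lambda = \frac{a+b-1}{a},
$$
so that Proposition~\ref{prop:formula:freebetaprime_freegamma} yields $\mu_{t,\theta,\lambda} = f\beta'(a,b)$.

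First I would confirm that these parameters lie in the admissible range $1\le\lambda<1+t/\theta$ required by Theorem~\ref{thm:free_entropy}. Writing $\lambda = 1 + (b-1)/a$ shows $\lambda>1$ since $b>1$; a short computation gives $1+t/\theta = b$, so the remaining strict inequality $\lambda<1+t/\theta$ reads $(a-1)(b-1)>0$, which holds because $a,b>1$. In particular this places $\mu_{t,\theta,\lambda}$ in the atomless regime of \eqref{eq:atom}, so $f\beta'(a,b)$ is genuinely supported in $\R_{>0}$ and is a legitimate competitor in the variational problem.

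The key step is to verify that $V_{t,\theta,\lambda}$ coincides with $V_{a,b}$ under this substitution. From the definition of $V_{t,\theta,\lambda}$ for $\lambda>1$, I would compute $t/(\theta(\lambda-1)) = a$, so the coefficient of $\log x$ equals $1-a$, and $1 + t\lambda/(\theta(\lambda-1)) = a+b$, matching the coefficient of the second logarithm. The crucial observation is that $t(\lambda-1) = 1$, so the shift inside the second logarithm is exactly $1$; hence
$$
V_{t,\theta,\lambda}(x) = (1-a)\log x + (a+b)\log(1+x) = V_{a,b}(x).
$$

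With these identifications, Theorem~\ref{thm:free_entropy} gives at once that $\mu_{t,\theta,\lambda} = f\beta'(a,b)$ is the unique maximizer of $\Sigma_{V_{t,\theta,\lambda}} = \Sigma_{V_{a,b}}$ over $\calP(\R_{>0})$, which is the assertion. The proof therefore requires no new analysis beyond the earlier theorem; the only mild obstacle is the algebraic bookkeeping in the parameter dictionary, and once the identity $t(\lambda-1)=1$ is noticed, the two potentials match \emph{exactly} and the conclusion follows immediately.
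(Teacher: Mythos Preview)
Your proof is correct and follows exactly the approach the paper intends: the corollary is deduced from Theorem~\ref{thm:free_entropy} via the parameter dictionary of Proposition~\ref{prop:formula:freebetaprime_freegamma}, and your verification that $t(\lambda-1)=1$ forces $V_{t,\theta,\lambda}=V_{a,b}$ is precisely the point. The paper states this only in a single sentence preceding the corollary, so your write-up is in fact more explicit than what appears there.
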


\section{Meixner-type free beta-gamma algebra}
\label{sec:Fbgalg}

In classical probability, there are many algebraic relations between gamma and beta random variables, so called {\it beta-gamma algebra}, see e.g. \cite{FS23}. A purpose of this section is to study algebraic relations between free beta and free gamma random variables in the sense of Meixner-type.

According to Section \ref{sec:intro}, if $G_1^{(p)} \sim \eta(p,1)$ and $G_2^{(q)}\sim \eta(q,1)$ are free, then
$$
G_1^{(p)} + G_2^{(q)} \overset{\rmd}{=} G_3^{(p+q)}, \qquad p,q>0
$$
for some $G_3^{(p+q)}\sim \eta(p+q,1)$. We call a positive operator $G\sim \eta(p,1)$ $(p>0)$ a {\it Meixner-type free gamma random variable}. Recall that $\eta(p,1)=\mu_{p,1,1}$ for all $p>0$.

First, we investigate the reversed measure of $\mu_{t,\theta,\lambda}$ as follows. Since $\mu_{t,\theta,\lambda}(\{0\})=0$ for $1\le \lambda \le 1+t/\theta$, we can define the reversed measure $\mu_{t,\theta,\lambda}^{\langle-1\rangle}$ in this case. We have already obtained the measure $\mu_{t,\theta,1}^{\langle-1\rangle}$ by Lemma \ref{lem:muinverse_formula}.

\begin{lem}\label{thm:reverse}
For $t,\theta>0$ and $1< \lambda \le 1+\frac{t}{\theta}$, we have
$$
\mu_{t,\theta,\lambda}^{\langle -1\rangle} =D_{(t(\lambda-1))^{-1}}( \mu_{t', \theta', \lambda'} ),
$$
where
\begin{align*}
t' = \frac{(\theta+t)(\lambda-1)}{t-\theta(\lambda-1)}, \quad \theta' = \frac{\theta(\theta+t)(\lambda-1)^2}{ (t-\theta(\lambda-1))^2} \quad \text{and} \quad \lambda'=\frac{t\lambda}{(\theta+t)(\lambda-1)}\ge1.
\end{align*}
\end{lem}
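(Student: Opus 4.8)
The plan is to establish the identity at the level of $S$-transforms and then invoke their injectivity. Since $1<\lambda\le 1+t/\theta$, equation~\eqref{eq:atom} gives $\mu_{t,\theta,\lambda}(\{0\})=0$; hence $\mu_{t,\theta,\lambda}\in\calP(\R_{\ge0})\setminus\{\delta_0\}$ has no atom at the origin and its reversed measure $\mu_{t,\theta,\lambda}^{\langle-1\rangle}$ is well defined. Because the $S$-transform determines a measure in $\calP(\R_{\ge0})\setminus\{\delta_0\}$, it suffices to show that $S_{\mu_{t,\theta,\lambda}^{\langle-1\rangle}}$ and $S_{D_{(t(\lambda-1))^{-1}}(\mu_{t',\theta',\lambda'})}$ agree on a common real interval.

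First I would compute the left-hand side. Substituting $-z-1$ into the formula of Lemma~\ref{prop_Strans_GFMG} and applying the reversal rule~\eqref{eq:S-trans_inverse}, one gets
\begin{align*}
S_{\mu_{t,\theta,\lambda}^{\langle-1\rangle}}(z)
=\frac{1}{S_{\mu_{t,\theta,\lambda}}(-z-1)}
=\frac{t\bigl(t-\theta(\lambda-1)-\theta(\lambda-1)z\bigr)}{(t+\theta)+\theta z}.
\end{align*}
For the right-hand side, the dilation rule~\eqref{eq:Strans_dilation} gives $S_{D_{(t(\lambda-1))^{-1}}(\mu_{t',\theta',\lambda'})}(z)=t(\lambda-1)\,S_{\mu_{t',\theta',\lambda'}}(z)$, and Lemma~\ref{prop_Strans_GFMG} again presents $S_{\mu_{t',\theta',\lambda'}}(z)=\frac{t'-\theta'z}{t'(t'+\theta'(\lambda'-1)z)}$. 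Thus both sides are fractional-linear functions of $z$.

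The core step is to match these two fractional-linear expressions. Writing them as $\frac{A_0+A_1z}{B_0+B_1z}$ and $\frac{C_0+C_1z}{D_0+D_1z}$, equality is equivalent to proportionality of the coefficient vectors, so I would introduce a single scalar $\kappa$ with $(C_0,C_1,D_0,D_1)=\kappa(A_0,A_1,B_0,B_1)$ and solve the resulting system. The $z$-coefficients of the numerators force $\theta'=\kappa\theta$; the constant terms give $t'=\kappa(t-\theta(\lambda-1))/(\lambda-1)$; the constant term of the denominators yields $t'^2=\kappa(t+\theta)$, whence $\kappa=\frac{(t+\theta)(\lambda-1)^2}{(t-\theta(\lambda-1))^2}$; and the $z$-coefficient of the denominators reduces to $t'(\lambda'-1)=1$. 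Substituting back reproduces exactly the claimed values
\begin{align*}
t'=\frac{(\theta+t)(\lambda-1)}{t-\theta(\lambda-1)},\qquad
\theta'=\frac{\theta(\theta+t)(\lambda-1)^2}{(t-\theta(\lambda-1))^2},\qquad
\lambda'=1+\frac{1}{t'}=\frac{t\lambda}{(\theta+t)(\lambda-1)}.
\end{align*}

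Finally I would confirm admissibility: $t',\theta'>0$ as long as $t-\theta(\lambda-1)>0$, i.e. $\lambda<1+t/\theta$, while $\lambda'\ge1$ is equivalent to $\theta(\lambda-1)\le t$, that is, to the hypothesis $\lambda\le 1+t/\theta$; a short check also shows $\lambda'\le 1+t'/\theta'$, so by~\eqref{eq:atom} the dilated target measure again has no atom at $0$ and the $S$-transform comparison is legitimate. The main obstacle is not conceptual but computational bookkeeping: carrying out the substitution $z\mapsto -z-1$ and the reciprocal without sign errors, and then disentangling the proportionality system so that the three parameters emerge in the stated closed form. Some additional care is needed at the endpoint $\lambda=1+t/\theta$, where $t-\theta(\lambda-1)=0$ so that $t',\theta'$ blow up and the identity must be understood as a degenerate limiting case.
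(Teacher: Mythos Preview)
Your proof is correct. You take a more direct computational route than the paper: you match $S$-transforms explicitly, substituting $z\mapsto -z-1$ into Lemma~\ref{prop_Strans_GFMG}, applying~\eqref{eq:S-trans_inverse} and~\eqref{eq:Strans_dilation}, and then solving the resulting proportionality system for $(t',\theta',\lambda')$. The paper instead routes through the free beta prime identification: it invokes Theorem~\ref{thm:formula_free_multiplicative} to write $\mu_{t,\theta,\lambda}=D_{t(\lambda-1)}\bigl(\pi_{1,t/(\theta(\lambda-1))}\boxtimes\pi_{1,1+t/\theta}^{\langle-1\rangle}\bigr)$, reverses to obtain $D_{(t(\lambda-1))^{-1}}\bigl(f\beta'(1+t/\theta,\,t/(\theta(\lambda-1)))\bigr)$, and then applies Proposition~\ref{prop:formula:freebetaprime_freegamma} to read off $(t',\theta',\lambda')$. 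The paper's argument is shorter because the $S$-transform algebra has already been absorbed into those earlier results, whereas your version is self-contained and makes the coefficient matching fully explicit. Both arguments ultimately rest on the same $S$-transform identities.

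Your observation about the endpoint $\lambda=1+t/\theta$ is well taken: at that value $t-\theta(\lambda-1)=0$, so $t'$ and $\theta'$ are undefined, and indeed the paper's own proof (via Proposition~\ref{prop:formula:freebetaprime_freegamma}, which requires $b>1$, i.e.\ $t/(\theta(\lambda-1))>1$) also only covers $\lambda<1+t/\theta$ strictly. So the issue is with the statement rather than with either proof.
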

\begin{proof}
By Theorem \ref{thm:formula_free_multiplicative} and Proposition \ref{prop:formula:freebetaprime_freegamma}, we obtain
\begin{align*}
\mu_{t,\theta,\lambda}^{\langle-1\rangle} 
&= D_{(t(\lambda-1))^{-1}} \left( \pi_{1,\frac{t}{\theta(\lambda-1)}}^{\langle-1\rangle} \boxtimes \pi_{1,1+\frac{t}{\theta}}\right) \\
&=D_{(t(\lambda-1))^{-1}}\left(f\beta' \left( 1+\frac{t}{\theta}, \frac{t}{\theta(\lambda-1)} \right)\right)=D_{(t(\lambda-1))^{-1}} (\mu_{t',\theta',\lambda'}). 
\end{align*}
\end{proof}

The free infinite divisibility for the reversed measure of $\mu_{t,\theta,\lambda}$ follows from Section \ref{sec3}, Lemmas \ref{lem:muinverse_formula} and \ref{thm:reverse}.
\begin{cor}
Let us consider $t,\theta>0$ and $1\le \lambda \le 1+t/\theta$. Then the following properties hold.
\begin{enumerate}[\rm (1)]
\setlength{\itemsep}{-\parsep}
\item $\mu_{t,\theta,\lambda}^{\langle -1\rangle}$ is freely infinitely divisible. 
\item $\mu_{t,\theta, \lambda}^{\langle -1 \rangle}$ is freely selfdecomposable if and only if $\lambda=1+t/\theta$.
\item $\mu_{t,\theta,\lambda}^{\langle -1\rangle}$ is unimodal.
\end{enumerate}
\end{cor}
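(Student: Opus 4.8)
The plan is to reduce the reversed measure $\mu_{t,\theta,\lambda}^{\langle -1\rangle}$ to the already-understood family $\{\mu_{t',\theta',\lambda'}\}$ and then transfer each property from Section~\ref{sec3}, using that dilation preserves free infinite divisibility, free selfdecomposability and unimodality. Concretely, I would split the range of $\lambda$ into three regimes. For $\lambda=1$, Lemma~\ref{lem:muinverse_formula} together with the involutivity of reversal gives $\mu_{t,\theta,1}^{\langle -1\rangle}=\pi_{\theta/t^2,\,1+t/\theta}$, a Marchenko--Pastur law (which has no atom at $0$ since its shape parameter exceeds $1$). For $1<\lambda<1+t/\theta$, Lemma~\ref{thm:reverse} gives $\mu_{t,\theta,\lambda}^{\langle -1\rangle}=D_{(t(\lambda-1))^{-1}}(\mu_{t',\theta',\lambda'})$ with $\lambda'=\tfrac{t\lambda}{(\theta+t)(\lambda-1)}>1$. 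The endpoint $\lambda=1+t/\theta$ must be treated on its own, because there $t-\theta(\lambda-1)=0$ and the parameters $t',\theta'$ of Lemma~\ref{thm:reverse} blow up, so the structural shortcut is unavailable.

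For the first two regimes all three assertions follow quickly. Free infinite divisibility (1) holds because $\mu_{t',\theta',\lambda'}$ is freely infinitely divisible directly from Definition~\ref{def:GFMG} — its R-transform is a free L\'evy--Khintchine representation with free L\'evy measure $\tfrac{t'k_{\theta',\lambda'}(x)}{x}\,\rmd x$, which one checks is a genuine free L\'evy measure (no atom at $0$, $\int\min\{1,x^2\}<\infty$ near the edges) — while the Marchenko--Pastur law is a free compound Poisson law; dilation then preserves the property. For (2), Proposition~\ref{prop_FSD} says $\mu_{t',\theta',\lambda'}$ is freely selfdecomposable iff $\lambda'=1$; since $\lambda'>1$ throughout $1<\lambda<1+t/\theta$, and since the free L\'evy measure of a Marchenko--Pastur law is a single atom (hence not of the form $\tfrac{k(x)}{|x|}\,\rmd x$), the reversed measure is not freely selfdecomposable for any $\lambda<1+t/\theta$, while $\lambda'=1$ occurs exactly at $\lambda=1+t/\theta$. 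For (3), Proposition~\ref{prop:unimodality} gives unimodality of $\mu_{t',\theta',\lambda'}$ iff $1\le\lambda'\le 1+t'/\theta'$; a short computation shows $1+t'/\theta'=\tfrac{t}{\theta(\lambda-1)}$ and that $\lambda'\le \tfrac{t}{\theta(\lambda-1)}$ is equivalent to $\lambda\le 1+t/\theta$, so the hypothesis secures unimodality, and the Marchenko--Pastur density is unimodal by a direct sign analysis of its derivative (the mode sitting at the harmonic mean of the edges $a^\pm$).

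The main obstacle is the endpoint $\lambda=1+t/\theta$. Here I would compute the reversed law directly: from Lemma~\ref{prop_Strans_GFMG} and \eqref{eq:S-trans_inverse} one finds $S_{\mu_{t,\theta,1+t/\theta}^{\langle -1\rangle}}(z)=\tfrac{-t^2 z}{\theta z+t+\theta}$, and inverting $R(zS(z))=z$ yields an explicit algebraic R-transform for $\mu_{t,\theta,1+t/\theta}^{\langle -1\rangle}$. Free infinite divisibility and the explicit density, obtained by pushing \eqref{eq:density} forward under $x\mapsto 1/x$, then give (1) and, after a derivative sign analysis, (3). The delicate point is (2): one must recover the free L\'evy measure by Stieltjes inversion across the branch cut of the R-transform, obtaining a density of the form $\tfrac{k(x)}{x}$ on a half-line $(\ell,\infty)$, and then decide whether $k$ is nonincreasing there. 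This monotonicity check is where the real work concentrates, precisely because the interplay as $\lambda\uparrow 1+t/\theta$ is subtle — the quantity $\lambda'-1$ tends to $0$ while the lower edge of the free L\'evy support stays bounded away from $0$ — so the selfdecomposability criterion of \cite{BNT02} has to be verified by this direct computation rather than inferred from the degenerate limit of Lemma~\ref{thm:reverse}.
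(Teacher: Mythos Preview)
Your approach for $1 \le \lambda < 1+t/\theta$ is exactly the paper's: invoke Lemmas~\ref{lem:muinverse_formula} and~\ref{thm:reverse} to identify the reversed measure as a Marchenko--Pastur law (for $\lambda=1$) or a dilated $\mu_{t',\theta',\lambda'}$ (for $1<\lambda<1+t/\theta$), and then transfer the conclusions of Section~\ref{sec3}. The paper's ``proof'' is literally the one sentence preceding the corollary, to this effect.

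You are right that the endpoint $\lambda = 1+t/\theta$ is not genuinely covered by Lemma~\ref{thm:reverse}: its proof passes through Proposition~\ref{prop:formula:freebetaprime_freegamma}, which requires $b>1$, whereas here $b=\tfrac{t}{\theta(\lambda-1)}=1$ and $t',\theta'$ blow up. The paper simply includes the endpoint in the hypothesis of Lemma~\ref{thm:reverse} and moves on. So your instinct to treat it separately is sound, and your proposed route via the Voiculescu transform is the natural one.

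There is, however, a genuine problem at the endpoint, and you have already recorded the decisive observation without drawing the conclusion. You note that ``the lower edge of the free L\'evy support stays bounded away from $0$''---but this \emph{alone} already rules out free selfdecomposability: the criterion demands that $k$ in $\nu(\rmd x)=\tfrac{k(x)}{|x|}\,\rmd x$ be nonincreasing on all of $(0,\infty)$, and a support gap $(0,z_0)$ forces $k\equiv 0$ there while $k>0$ just beyond $z_0$. Carrying out your computation confirms this: one finds
\[
\phi_{\mu_{t,\theta,1+t/\theta}^{\langle-1\rangle}}(z)=\frac{-\theta+\sqrt{\theta^2-4t^2(t+\theta)z}}{2t^2},
\]
whose branch cut starts at $z_0=\tfrac{\theta^2}{4t^2(t+\theta)}>0$; Stieltjes inversion gives the free L\'evy density $\tfrac{\sqrt{4t^2(t+\theta)x-\theta^2}}{2\pi t^2 x^2}$ on $(z_0,\infty)$, so $k(x)=\tfrac{\sqrt{4t^2(t+\theta)x-\theta^2}}{2\pi t^2 x}$ rises from $0$ at $z_0$ to a maximum at $2z_0$ and only then decays. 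Hence the reversed measure at the endpoint is freely infinitely divisible and (via the explicit density push-forward) unimodal---your arguments for (1) and (3) go through---but it is \emph{not} freely selfdecomposable. In other words, the monotonicity check you anticipate will fail, and part~(2) of the corollary appears to be misstated; the paper's one-line appeal to Lemma~\ref{thm:reverse} does not actually establish it.
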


We recall that, if $\Gamma_p \sim \gamma(p,1)$ and $\Gamma_q \sim \gamma(q,1)$ are classically independent, then $\frac{\Gamma_p}{\Gamma_q}$ is distributed as beta prime distribution $\beta'(p,q)= \frac{x^{p-1} (1+x)^{-p-q}}{B(p,q)} \rmd x$ for $p,q>0$, see \cite[Chapter 27]{BJK95}. By analogy, we investigate {\it free beta-prime random variables in the sense of Meixner-type}, that is, the product of $G_1^{(p)}$ and $(G_2^{(q)})^{-1}$, where $G_1^{(p)}\sim \eta(p,1)$ and $G_2^{(q)}\sim \eta(q,1)$ are free.

\begin{prop}\label{prop:fMbetaprime}
Given $p,q>0$, we assume that $G_1^{(p)}\sim \eta(p,1)$ and $G_2^{(q)}\sim \eta(q,1)$ are free in a $C^\ast$-probability space $(\mathcal{A},\varphi)$. Then
$$
 (G_2^{(q)})^{-\frac{1}{2}} G_1^{(p)} (G_2^{(q)})^{-\frac{1}{2}} \sim \eta(p,1)\boxtimes \eta(q,1)^{\langle-1 \rangle} = D_{\frac{1+q}{q^2}}\left( \mu_{p,1,1+\frac{p}{1+q}}\right).
$$
\end{prop}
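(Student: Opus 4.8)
The plan is to decompose the claimed identity into its two equalities and treat them by different means: the first is a structural consequence of the definition of $\boxtimes$, while the second is an $S$-transform computation closed by injectivity of the $S$-transform.

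For the first equality, I would note that $(G_2^{(q)})^{-\frac{1}{2}} G_1^{(p)} (G_2^{(q)})^{-\frac{1}{2}}$ is precisely the product $\sqrt{X}\,Y\sqrt{X}$ appearing in the definition of free multiplicative convolution, with $X = (G_2^{(q)})^{-1}$ and $Y = G_1^{(p)}$. Here $X$ is a bounded positive operator: since $G_2^{(q)} \sim \eta(q,1) = \mu_{q,1,1}$ is supported on $[\alpha^-,\alpha^+]$ with $\alpha^- = (\sqrt{1+q}-1)^2 > 0$ by \eqref{eq:alpha^pm}, the element $G_2^{(q)}$ is invertible and $X$ has law $\eta(q,1)^{\langle -1\rangle}$. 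As $X$ lies in the algebra generated by $G_2^{(q)}$, it is free from $G_1^{(p)}$, so the product is distributed as $\eta(q,1)^{\langle -1\rangle}\boxtimes \eta(p,1)$, which equals $\eta(p,1)\boxtimes \eta(q,1)^{\langle -1\rangle}$ by commutativity of $\boxtimes$.

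For the second equality, I would compute the $S$-transform of each side. Using $\eta(p,1)=\mu_{p,1,1}$ and Lemma \ref{prop_Strans_GFMG} gives $S_{\eta(p,1)}(z)=(p-z)/p^2$, while Lemma \ref{lem:muinverse_formula} identifies $\eta(q,1)^{\langle -1\rangle} = \mu_{q,1,1}^{\langle -1\rangle} = \pi_{1/q^2,\,1+q}$, whose $S$-transform is $q^2/(1+q+z)$ by Example \ref{ex_MPlaw}. Since $S_{\mu\boxtimes\nu}=S_\mu S_\nu$, the left-hand side has $S$-transform $q^2(p-z)\big/\big(p^2(1+q+z)\big)$. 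On the right-hand side I would apply the dilation rule \eqref{eq:Strans_dilation} with $c=(1+q)/q^2$ together with Lemma \ref{prop_Strans_GFMG} at $(t,\theta,\lambda)=\big(p,\,1,\,1+\tfrac{p}{1+q}\big)$; a short simplification (the factor $1+q$ cancels against $1/c$) produces the same expression. Because $\lambda = 1+\tfrac{p}{1+q}\le 1+p = 1+t/\theta$, the measure $\mu_{p,1,\lambda}$ carries no atom at $0$ by \eqref{eq:atom}, so both sides lie in $\calP(\R_{\ge0})\setminus\{\delta_0\}$ and the two $S$-transforms agree on a common domain.

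Since the argument is purely a transform computation, there is no deep obstacle; the only points demanding care are the correct bookkeeping of the reversed-measure and dilation operations and confirming that the resulting $S$-transform identity holds on a genuinely shared domain, so that injectivity of the $S$-transform may be invoked to conclude equality of the two measures.
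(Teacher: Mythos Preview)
Your argument is correct. The first equality is handled just as the paper does (the paper simply cites Stone--Weierstrass for the freeness of $G_1^{(p)}$ and $(G_2^{(q)})^{-1}$, where you spell out boundedness via the support of $\eta(q,1)$; either is fine). For the second equality the paper takes a slightly different route: after using Lemma~\ref{lem:muinverse_formula} to write $\eta(q,1)^{\langle-1\rangle}=\pi_{q^{-2},1+q}$, it extracts the dilation $D_{(1+q)/q^2}$ and then applies the convolution formula of Theorem~\ref{thm:formula_free_multiplicative} (specifically \eqref{eq:mu_anotherrep}) with $t=p$, $\theta=1$ and $q'=1+q$ to recognize $\mu_{p,1,1}\boxtimes\pi_{(1+q)^{-1},1+q}=\mu_{p,1,1+p/(1+q)}$. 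You instead compute both $S$-transforms directly and match them, which amounts to unfolding the proof of Theorem~\ref{thm:formula_free_multiplicative} in this special case. Your approach is a bit more self-contained; the paper's is a bit shorter since it reuses established machinery. Both are valid and yield the same identity.
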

\begin{proof}
Since $\sigma(G_2^{(q)}) = [2+q-2\sqrt{q+1}, 2+q+2\sqrt{q+1}]$ and $f(x)=1/x$ is a bounded continuous function on $\sigma(G_2^{(q)})$, two random variables $G_1^{(p)}$ and $(G_2^{(q)})^{-1}=f(G_2^{(q)}) \in \mathcal{A}$ are also free by the Stone-Weierstrass' theorem.
We observe
\begin{align*}
\eta(p,1)\boxtimes \eta(q,1)^{\langle-1 \rangle} 
&= \mu_{p,1,1} \boxtimes \pi_{q^{-2},1+q} \qquad \text{(by Lemma \ref{lem:muinverse_formula})}\\
&=D_{\frac{1+q}{q^2}} \left(\mu_{p,1,1} \boxtimes \pi_{\frac{1}{1+q}, 1+q} \right)\\
&=D_{\frac{1+q}{q^2}} \left(\mu_{p,1,1+\frac{p}{1+q} }\right) \qquad \text{(by Theorem \ref{thm:formula_free_multiplicative})}.
\end{align*}
\end{proof}

Finally, we investigate the sum of freely independent and identically distributed reciprocal Meixner-type free gamma random variable.
\begin{thm}\label{prop:reversegamma}
Given $p>0$ and $n\in \N$, let us consider freely independent random variables $G_1^{(p)},G_2^{(p)}, \dots, G_{2^n}^{(p)} \sim \eta(p,1)$ in some $C^*$-probability space $(\mathcal{A},\varphi)$. Then
$$
\left(\frac{1}{G_1^{(p)}} + \frac{1}{G_2^{(p)}}+\cdots + \frac{1}{G_{2^n}^{(p)}} \right)^{-1} \overset{\rmd}{=} \left(2^n+\frac{2^n-1}{p}\right)^{-2}G^{(2^np+2^n-1)},
$$
for some $G^{(2^np+2^n-1)} \sim \eta(2^np+2^n-1,1)$.
\end{thm}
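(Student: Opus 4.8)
The plan is to pass to reciprocals, where the Meixner-type free gamma law becomes a Marchenko--Pastur law and the sum of reciprocals linearizes under free additive convolution. First I would record that, by Lemma~\ref{lem:muinverse_formula}, one has $\eta(p,1)=\mu_{p,1,1}=\pi_{1/p^2,\,1+p}^{\langle-1\rangle}$, and hence, reversing once more,
\[
(G_i^{(p)})^{-1}\sim \eta(p,1)^{\langle-1\rangle}=\pi_{1/p^2,\,1+p},\qquad i=1,\dots,2^n.
\]
Since $f(x)=1/x$ is bounded and continuous on the (positive, compact) spectrum $[(\sqrt{1+p}-1)^2,(\sqrt{1+p}+1)^2]$ of each $G_i^{(p)}$, the reciprocals $(G_i^{(p)})^{-1}=f(G_i^{(p)})$ remain freely independent, exactly as in the proof of Proposition~\ref{prop:fMbetaprime}.

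Next I would compute the law of the sum. Because the $(G_i^{(p)})^{-1}$ are free with common law $\pi_{1/p^2,1+p}$, the sum is distributed as the $2^n$-fold free additive convolution $\pi_{1/p^2,1+p}^{\boxplus 2^n}$. Using the R-transform $R_{\pi_{\theta,\lambda}}(z)=\theta\lambda z/(1-\theta z)$ from Example~\ref{ex_MPlaw} and additivity of the R-transform under $\boxplus$, the shape parameter simply scales:
\[
R_{\pi_{1/p^2,1+p}^{\boxplus 2^n}}(z)=2^n\cdot\frac{(1/p^2)(1+p)z}{1-(1/p^2)z}=R_{\pi_{1/p^2,\,2^n(1+p)}}(z),
\]
so that $\sum_{i=1}^{2^n}(G_i^{(p)})^{-1}\sim \pi_{1/p^2,\,2^n(1+p)}$.

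Finally I would invert once more. Since $2^n(1+p)>1$, the support of $\pi_{1/p^2,2^n(1+p)}$ is bounded away from $0$, so the sum is a positive operator bounded below and hence invertible; its inverse is therefore distributed as $\pi_{1/p^2,\,2^n(1+p)}^{\langle-1\rangle}$. It remains to identify this reversed Marchenko--Pastur law with the claimed dilated free gamma law. Setting $m:=2^np+2^n-1$, so that $1+m=2^n(1+p)$, and using the elementary commutation $(D_a\mu)^{\langle-1\rangle}=D_{1/a}(\mu^{\langle-1\rangle})$ together with $D_a(\pi_{\theta,\lambda})=\pi_{a\theta,\lambda}$, I obtain from $\eta(m,1)=\pi_{1/m^2,1+m}^{\langle-1\rangle}$ that
\[
D_{p^2/m^2}\bigl(\eta(m,1)\bigr)=\bigl(D_{m^2/p^2}(\pi_{1/m^2,\,1+m})\bigr)^{\langle-1\rangle}=\pi_{1/p^2,\,1+m}^{\langle-1\rangle}=\pi_{1/p^2,\,2^n(1+p)}^{\langle-1\rangle}.
\]
Since $p^2/m^2=(m/p)^{-2}=(2^n+(2^n-1)/p)^{-2}$, this is exactly the asserted identity.

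The computations throughout are routine; the one point requiring care is the bookkeeping of the reversal $\langle-1\rangle$ and the dilation $D_c$, and the verification that the two constants $m$ and $c=p^2/m^2$ produced by matching the scale and shape parameters of the two Marchenko--Pastur laws coincide with those in the statement. The conceptual heart of the argument, the step that makes everything collapse, is the observation of Lemma~\ref{lem:muinverse_formula} that the reciprocal of a Meixner-type free gamma variable is Marchenko--Pastur, because the Marchenko--Pastur family is closed under $\boxplus$ with the shape parameter merely additive; this turns the \emph{a priori} delicate ``reciprocal of a free sum of reciprocals'' into two transparent parameter shifts.
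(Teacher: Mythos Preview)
Your proof is correct and follows essentially the same route as the paper: pass to reciprocals using Lemma~\ref{lem:muinverse_formula} so that $\eta(p,1)^{\langle-1\rangle}=\pi_{1/p^2,\,1+p}$, exploit the closure of the Marchenko--Pastur family under $\boxplus$, then invert back and match parameters. The only difference is cosmetic: the paper sets $\tau_p:=\pi_{1/p^2,1+p}$ and proves $\tau_p^{\boxplus 2^n}=D_{(2^n+(2^n-1)/p)^2}(\tau_{2^np+2^n-1})$ by induction on $n$ (doubling at each step), whereas you compute $\pi_{1/p^2,1+p}^{\boxplus 2^n}=\pi_{1/p^2,\,2^n(1+p)}$ in one shot via the R-transform, which is slightly cleaner and in fact shows the statement holds with $2^n$ replaced by any positive integer.
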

\begin{proof}
By an argument similar to that in Proposition \ref{prop:fMbetaprime}, $(G_1^{(p)})^{-1}, (G_2^{(p)})^{-1}, \dots, (G_{2^n}^{(p)})^{-1}\in \mathcal{A}$ are also free. Denote by $\tau_p:=\eta(p,1)^{\langle-1\rangle} = \pi_{p^{-2},1+p}$ for $p>0$. Then the distribution of the LHS coincides with $(\tau_p^{\boxplus 2^n})^{\langle-1\rangle}$. Finally we should prove that
\begin{align}\label{eq:tau}
\tau_p^{\boxplus 2^n} = D_{(2^n + \frac{2^n-1}{p})^2} (\tau_{2^np+2^n-1}).
\end{align}
For $n=1$, we get
$$
\tau_p^{\boxplus 2} = \pi_{p^{-2},1+p}^{\boxplus 2}=D_{p^{-2}}( \pi_{1,2+2p}) = D_{(1+2p)^2p^{-2}} (\pi_{(1+2p)^{-2}, 1+(1+2p)})= D_{(2+p^{-1})^2} (\tau_{1+2p}).
$$
We assume that \eqref{eq:tau} holds true in the case when $n=k$. Then
\begin{align*}
\tau_p^{\boxplus 2^{k+1}} 
&= (\tau_p^{\boxplus 2^k})^{\boxplus 2} = D_{(2^k + \frac{2^k-1}{p})^2} (\tau_{2^kp+2^k-1})^{\boxplus 2}\\
&=D_{(2^k + \frac{2^k-1}{p})^2} D_{(2+ \frac{1}{2^k p +2^k-1})^2} (\tau_{1+2(2^kp +2^k-1)})\\
&=D_{(2^{k+1} + \frac{2^{k+1}-1}{p})^2} (\tau_{2^{k+1}p + 2^{k+1}-1}).
\end{align*}
By induction, we obtain the desired formula \eqref{eq:tau}.
\end{proof}

\begin{cor}
Given $m \ge 2$, we consider free copies $\{ G_1^{((2(m-1))^{-1})}, G_2^{((2(m-1))^{-1})}\}$ from $\eta((2(m-1))^{-1},1)$ and free copies $\{G_1^{((m-1)^{-1})}, \dots, G_m^{((m-1)^{-1})}\}$ from $\eta((m-1)^{-1},1)$. Then
$$
\left( \frac{1}{G_1^{((2(m-1))^{-1})}} + \frac{1}{G_2^{((2(m-1))^{-1})}} \right)^{-1} \overset{\rmd}{=} \frac{1}{4m^2} (G_1^{((m-1)^{-1})}+ \cdots + G_m^{((m-1)^{-1})})
$$
\end{cor}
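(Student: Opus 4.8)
The plan is to read the left-hand side off Theorem \ref{prop:reversegamma} specialized to $n=1$, read the right-hand side off the free-convolution reproductive property of the Meixner-type free gamma distributions, and check that both reduce to the same scaled copy of $\eta(\tfrac{m}{m-1},1)$. Throughout, set $p:=\tfrac{1}{2(m-1)}$, so that the first family consists of copies of $\eta(p,1)$.

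First I would specialize Theorem \ref{prop:reversegamma} to $n=1$, which gives
$$
\left(\frac{1}{G_1^{(p)}}+\frac{1}{G_2^{(p)}}\right)^{-1} \overset{\rmd}{=} \left(2+\frac{1}{p}\right)^{-2} G^{(2p+1)}, \qquad G^{(2p+1)}\sim \eta(2p+1,1).
$$
Substituting $p=\tfrac{1}{2(m-1)}$ yields the two arithmetic reductions $2+\tfrac{1}{p}=2+2(m-1)=2m$ and $2p+1=\tfrac{1}{m-1}+1=\tfrac{m}{m-1}$. Hence the scalar factor is $(2m)^{-2}=\tfrac{1}{4m^2}$, and the left-hand side of the corollary is distributed as $\tfrac{1}{4m^2}$ times a variable with law $\eta(\tfrac{m}{m-1},1)$.

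Next I would treat the right-hand side. Each summand $G_i^{((m-1)^{-1})}$ has law $\eta(\tfrac{1}{m-1},1)$, and the $m$ copies are freely independent. Applying the reproductive identity $\eta(t_1,\theta)\boxplus\eta(t_2,\theta)=\eta(t_1+t_2,\theta)$ from the introduction iteratively with $\theta=1$, the sum $G_1^{((m-1)^{-1})}+\cdots+G_m^{((m-1)^{-1})}$ has law $\eta\bigl(\tfrac{m}{m-1},1\bigr)$. Multiplying by $\tfrac{1}{4m^2}$, the right-hand side is likewise distributed as $\tfrac{1}{4m^2}$ times a law-$\eta(\tfrac{m}{m-1},1)$ variable, so both sides agree in distribution.

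There is no genuine obstacle here; the result is a direct specialization. The only points requiring care are the two parameter computations just recorded and the observation — already used in the proofs of Proposition \ref{prop:fMbetaprime} and Theorem \ref{prop:reversegamma} — that the reciprocals $(G_i^{(p)})^{-1}$ remain freely independent, since $x\mapsto 1/x$ is bounded and continuous on the compact positive spectrum of each $G_i^{(p)}$, so the functional calculus preserves freeness.
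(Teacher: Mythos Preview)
Your proof is correct and follows essentially the same approach as the paper: specialize Theorem \ref{prop:reversegamma} with $n=1$ and $p=\tfrac{1}{2(m-1)}$, compute $2+\tfrac{1}{p}=2m$ and $2p+1=\tfrac{m}{m-1}$, and then invoke the reproductive property $\eta(t_1,1)\boxplus\eta(t_2,1)=\eta(t_1+t_2,1)$ to identify both sides with the dilation $D_{1/(4m^2)}\eta(\tfrac{m}{m-1},1)$. The paper's proof presents this as a single chain of equalities on the law of the left-hand side, passing through $D_{\frac{1}{4}\cdot\frac{(2p)^2}{(2p+1)^2}}\eta(2p,1)^{\boxplus\frac{2p+1}{2p}}$, whereas you handle the two sides separately; the content is the same.
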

\begin{proof}
By putting $p=\frac{1}{2(m-1)}$ and $n=1$ in Proposition \ref{prop:reversegamma}, we get
\begin{align*}
\left( \frac{1}{G_1^{(p)}} + \frac{1}{G_2^{(p)}} \right)^{-1} 
&\sim D_{\frac{p^2}{(2p+1)^2}} \eta(2p+1,1)\\
&= D_{\frac{1}{4}\cdot \frac{(2p)^2}{(2p+1)^2}} \eta(2p,1)^{\boxplus \frac{2p+1}{2p}}=D_{\frac{1}{4m^2}} \eta\left(\frac{1}{m-1}, 1 \right)^{\boxplus m},
\end{align*}
as desired.
\end{proof}

In classical probability theory, it is known that, if $\Gamma_p \sim \gamma(p,1)$ and $\Gamma_q \sim \gamma(q,1)$ are independent, then the random variable
$$
\left(1+\frac{\Gamma_p}{\Gamma_q} \right)^{-1} = \frac{\Gamma_p^{-1}}{\Gamma_p^{-1}+\Gamma_q^{-1}}
$$
is distributed as a beta distribution $\beta(p,q)$ for $p,q>0$. Below, we study {\it free beta random variables in the sense of Meixner-type}.

\begin{thm}
Given $p>0$, let us set free random variables $G_1^{(p)}, G_2^{(p)} \sim \eta(p,1)$ in $C^\ast$-probability space $(\mathcal{A},\varphi)$. Define
$$
B^{(p)} := \{(G_1^{(p)})^{-1} + (G_2^{(p)})^{-1} \}^{-\frac{1}{2}} (G_1^{(p)})^{-1}  \{(G_1^{(p)})^{-1} + (G_2^{(p)})^{-1} \}^{-\frac{1}{2}} \in \mathcal{A},
$$
and $\mu_p := \mathcal{L}(B^{(p)})$. Then the following assertions hold.
\begin{enumerate}[\rm (1)]
\setlength{\itemsep}{-\parsep}
\item Its S-transform is given by
$$
S_{\mu_p}(z)=\frac{p^4}{(1+p+z)(2p+1-z)}
$$
for $z$ in a neighborhood of $(-1,0)$.
\item Its R-transform is given by
$$
R_{\mu_p}(z)= \frac{p(z-p^3)-\sqrt{(3p+2)^2z^2-2p^5z+p^8}}{2z}, \qquad z\in \left(-\frac{p^3}{2(p+1)}, 0\right).
$$
\item The measure $\mu_p$ is not freely infinitely divisible for any $p>0$. 
\end{enumerate}
\end{thm}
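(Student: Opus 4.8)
The plan is to recognize $\mu_p$ as the reciprocal of an additively shifted free multiplicative convolution that has already been computed in the paper, and then to extract the S- and R-transforms from that identification. Set $U=(G_1^{(p)})^{-1}$, $V=(G_2^{(p)})^{-1}$ and $W=U+V$, so that $B^{(p)}=W^{-1/2}UW^{-1/2}$ and $(B^{(p)})^{-1}=W^{1/2}G_1^{(p)}W^{1/2}$. Using that $\varphi$ is a trace, the moments of $(B^{(p)})^{-1}$ equal those of $WG_1^{(p)}=(U+V)G_1^{(p)}=I+(G_2^{(p)})^{-1}G_1^{(p)}$; expanding the binomial and applying traciality once more shows these coincide with the moments of $I+Y$, where $Y:=(G_2^{(p)})^{-1/2}G_1^{(p)}(G_2^{(p)})^{-1/2}$. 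As both operators are positive and bounded this upgrades to an identity of laws, giving
\[
\mu_p=\bigl(\delta_1\boxplus\xi_p\bigr)^{\langle-1\rangle},\qquad \xi_p:=\eta(p,1)\boxtimes\eta(p,1)^{\langle-1\rangle},
\]
the freeness needed to identify $\mathcal{L}(Y)$ being obtained by functional calculus exactly as in the proof of Proposition \ref{prop:fMbetaprime}.

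To compute $S_{\mu_p}$ I would first evaluate $S_{\xi_p}$: by Proposition \ref{prop:fMbetaprime} (with $q=p$) we have $\xi_p=D_{(1+p)/p^2}\bigl(\mu_{p,1,1+p/(1+p)}\bigr)$, so Lemma \ref{prop_Strans_GFMG} together with the dilation rule \eqref{eq:Strans_dilation} gives the compact form $S_{\xi_p}(z)=\frac{p-z}{1+p+z}$. The main obstacle is to transport this through the additive translation $\boxplus\delta_1$ and the reversal $(\cdot)^{\langle-1\rangle}$, since translation is additive while the S-transform is multiplicative. I would handle the shift analytically: from $G_{\delta_1\boxplus\xi_p}(w)=G_{\xi_p}(w-1)$ one derives a relation expressing $\Psi_{\delta_1\boxplus\xi_p}$ through $\Psi_{\xi_p}$, and inverting it in the variable $\Psi_{\xi_p}^{\langle-1\rangle}$ (using $S_\mu(z)=\tfrac{1+z}{z}\Psi_\mu^{\langle-1\rangle}(z)$) returns $S_{\delta_1\boxplus\xi_p}$ as a ratio of linear factors. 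Applying the reversal identity \eqref{eq:S-trans_inverse}, namely $S_{\mu_p}(z)=1/S_{\delta_1\boxplus\xi_p}(-1-z)$, then yields $S_{\mu_p}$. As a consistency check on the constants I would use the involution $B^{(p)}\mapsto I-B^{(p)}$, realized by exchanging the two free copies $G_1^{(p)}\leftrightarrow G_2^{(p)}$: it shows $\mu_p$ is invariant under $x\mapsto 1-x$, so in particular $m_1(\mu_p)=\tfrac12$, which must be matched by $S_{\mu_p}(0)=1/m_1(\mu_p)$.

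With $S_{\mu_p}$ in hand, the R-transform comes by inversion: substituting into \eqref{eq:formula_R_S}, $R_{\mu_p}(zS_{\mu_p}(z))=z$, realizes $R_{\mu_p}$ as the functional inverse of the rational map $z\mapsto zS_{\mu_p}(z)$, so $R_{\mu_p}$ solves a quadratic equation whose closed-form solution involves $\sqrt{Q(z)}$ for an explicit quadratic $Q$; the correct branch is fixed by imposing $R_{\mu_p}(0)=0$, which determines the sign of the square root and gives the stated closed form. Finally, to show that $\mu_p$ is never freely infinitely divisible I would invoke the Bercovici–Voiculescu characterization: for an FID law the Voiculescu transform $z\mapsto R_{\mu_p}(1/z)$ extends analytically to all of $\C^+$ with nonpositive imaginary part there. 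Computing $\operatorname{disc}Q$ shows it is strictly negative for every $p>0$, so $Q$ has a non-real complex-conjugate pair of zeros; hence the square root, and therefore the Voiculescu transform, acquires a genuine branch point in the open upper half-plane. This rules out the required single-valued analytic extension and proves the claim for all $p>0$. The one point deserving care is verifying that the offending branch point truly lies in $\C^+$ uniformly in $p$, which reduces to the short estimate $\operatorname{disc}Q<0$.
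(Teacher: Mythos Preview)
Your route differs from the paper's. The paper does not pass through $\mu_p^{\langle-1\rangle}=\delta_1\boxplus\xi_p$ at all; instead it invokes the free Lukacs property (Szpojankowski) to assert that $W:=(G_1^{(p)})^{-1}+(G_2^{(p)})^{-1}$ and $B^{(p)}$ are \emph{free}, so the algebraic identity $(G_1^{(p)})^{-1}=W^{1/2}B^{(p)}W^{1/2}$ becomes a $\boxtimes$-factorization and $S_{\mu_p}=S_{\pi_{p^{-2},1+p}}/S_{\mathcal{L}(W)}$ with no additive shift to undo. Your plan avoids the Lukacs theorem at the cost of transporting $S$ through $\boxplus\,\delta_1$; a cleaner way to execute that step is to go via R-transforms, using $R_{\delta_1\boxplus\xi_p}(w)=w+R_{\xi_p}(w)$, rather than manipulating $\Psi$. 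A small remark: you invoke traciality of $\varphi$, which the hypothesis does not give, but this is harmless since the joint law of two free self-adjoint elements can always be realized in a tracial $W^\ast$-probability space.

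However, neither your approach nor the paper's will establish the formulas as printed, because your consistency check is correct and the stated $S_{\mu_p}$ is not. Swapping $G_1^{(p)}\leftrightarrow G_2^{(p)}$ sends $B^{(p)}\mapsto I-B^{(p)}$ while preserving the joint law, so $\mu_p$ is symmetric under $x\mapsto 1-x$ and $m_1(\mu_p)=\tfrac12$, i.e.\ $S_{\mu_p}(0)=2$ for every $p>0$; the printed formula gives $S_{\mu_p}(0)=p^4/((1+p)(2p+1))$, which for $p=1$ would yield $m_1=6$, impossible since $0\le B^{(p)}\le I$. The slip in the paper's proof is that it inserts the law of $W^{-1}$ (which by Theorem~\ref{prop:reversegamma} is $D_{p^2/(2p+1)^2}\eta(2p+1,1)$) where the law of $W$ itself is needed; in fact $\mathcal{L}(W)=\pi_{p^{-2},1+p}^{\boxplus2}=\pi_{p^{-2},\,2p+2}$, and both approaches, carried out correctly, give
\[
S_{\mu_p}(z)=\frac{2p+2+z}{1+p+z},\qquad R_{\mu_p}(w)=\frac{w-2(1+p)+\sqrt{\,w^2+4(1+p)^2\,}}{2}.
\]
The conclusion of part~(3) survives this correction, since the square root still has a branch point at $-2i(1+p)\in\C^-$, obstructing analytic continuation of $R_{\mu_p}$ to $\C^-$.
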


\begin{proof}
The existence of the measure $\mu_p$ follows from Riesz-Markov-Kakutani's theorem.
Note that, $(G_1^{(p)})^{-1},(G_2^{(p)})^{-1} \sim \pi_{p^{-2}, 1+p}$ are free in $(\mathcal{A},\varphi)$. Hence $(G_1^{(p)})^{-1}+(G_2^{(p)})^{-1}$ and $B^{(p)}$ are also free in $(\mathcal{A},\varphi)$ by free Lukacs property (see \cite{Szp15}). Since
$$
(G_1^{(p)})^{-1} = \{(G_1^{(p)})^{-1}+(G_2^{(p)})^{-1}\}^{\frac{1}{2}} B^{(p)}  \{(G_1^{(p)})^{-1}+(G_2^{(p)})^{-1}\}^{\frac{1}{2}},
$$
we have 
$$
\pi_{p^{-2}, 1+p} = \mu_p \boxtimes D_{\frac{p^2}{(2p+1)^2}} \eta(2p+1,1)
$$
by Theorem \ref{prop:reversegamma}. Therefore we get
\begin{align*}
S_{\mu_p}(z) 
&= \frac{S_{\pi_{p^{-2}, 1+p}}(z)}{S_{D_{p^2(2p+1)^{-2}} (\eta(2p+1,1))}(z)}\\
&=\frac{p^2}{1+p+z} \cdot \frac{p^2}{2p+1-z}=\frac{p^4}{(1+p+z)(2p+1-z)}.
\end{align*}

Next, since $z\mapsto z S_{\mu_p}(z)$ is strictly increasing on $(-1,0)$ for any $p>0$, we have $R_{\mu_p}^{\langle -1\rangle}(z)= z S_{\mu_p}(z)$. Hence, we can compute the R-transform of $\mu_p$ by using the relation. 

The complex equation $(3p+2)^2z^2 -2p^5 z+p^8=0$ has distinct roots
$$
p^4 \cdot \frac{p-2 \sqrt{(p+1)(2p+1)} \ i}{(3p+2)^2} \quad \text{and} \quad p^4 \cdot \frac{p+2 \sqrt{(p+1)(2p+1)}\ i}{(3p+2)^2},
$$
where $i=\sqrt{-1}$. This means that $\sqrt{(3p+2)^2z^2 -2p^5 z+p^8}$ has a pole at $z\in \C^-$. Hence $R_{\mu_p}$ does not have an analytic continuation to $\C^-$. The measure $\mu_p$ is not freely infinitely divisible.
\end{proof}


\section{Asymptotic roots of polynomials and the generalized Meixner-type free gamma distributions}
\label{sec7}

In this section, we briefly discuss the connection between orthogonal (Jacobi / Bessel) polynomials and the measure $\mu_{t,\theta,\lambda}$ via finite free probability, as developed in \cite{Mar21,MSS22}. We begin by introducing the notation that will be used throughout this section.
\begin{itemize}
\setlength{\itemsep}{-\parsep}
\item For a polynomial $p$ of degree $d$, we denote by $\widetilde{e}_k^{(d)}(p)$ the normalized $k$-th elementary symmetric polynomial in the $d$ roots $\lambda_1(p),\dots, \lambda_d(p)$ of $p$. Explicitly,
$$
\widetilde{e}_k^{(d)}(p) : = \binom{d}{k}^{-1} \sum_{1\le i_1< \cdots < i_k \le d} \lambda_{i_1}(p)\cdots \lambda_{i_k}(p), \quad k=1,\dots, d.
$$
Then we can represent any polynomials $p$ of degree $d$ as
$$
p(x) = \prod_{i=1}^d (x-\lambda_i(p))=\sum_{k=0}^d (-1)^k \binom{d}{k} \widetilde{e}_k^{(d)}(p) x^{d-k}.
$$
\item ({\it Dilation}) For $c\neq 0$ and a polynomial $p$ of degree $d$, we define
$$
(D_c(p))(x):= c^d p\left(\frac{x}{c}\right).
$$
Then one can see that $\widetilde{e}_k^{(d)}(D_c(p))= c^k\ \widetilde{e}_k^{(d)}(p)$ for $k=1,\dots, d$.
\item ({\it Empirical root distribution}) For a polynomial $p$ of degree $d$, we define the probability measure
$$
\rho[[ p]] :=\frac{1}{d}\sum_{p(x)=0} \delta_x.
$$
The measure is called the {\it empirical root distribution} of $p$. If $p$ is real-rooted (resp. positive real-rooted), then $\rho[[p]] \in \calP(\R)$ (resp. $\rho[[p]] \in \calP(\R_{>0})$).
\end{itemize}

\subsection{Jacobi polynomial}

For $a \in \R\setminus\{i/d: i=1,\dots, d-1\}$ and $b\in \R$, we denote by $J^{(a,b)}_d$ the monic polynomial of degree $d$, in which its coefficient is given by
$$
\widetilde{e}_k^{(d)}\left(J^{(a,b)}_d \right) :=  \frac{(bd)_k}{(ad)_k} \qquad \text{for} \ k=0, 1,\dots,d,
$$
where $(x)_n:=x(x-1)\cdots(x-n+1)$ and $(x)_0:=1$. The polynomials $J^{(a,b)}_d$ are well-known as the {\it Jacobi polynomials}. According to \cite[(80)]{MFMP24b}, the polynomial $J^{(a,b)}_d$ can be represented by a hypergeometric function as follows:
$$
J_d^{(a,b)}(x)=(-1)^d\frac{ (bd)_d}{(ad)_d} {}_2F_1 (-d, ad-d+1, bd-d+1; x).
$$
It is known that the polynomial is orthogonal with respect to the weight function 
$$
w_{d}^{(a,b)}(x):=x^{d(b-1)}(1-x)^{d(a-b-1)}
$$ 
when $-bd+d-1 \notin \Z_{\ge0}$ (see e.g. \cite[Theorem 1]{DJJ13}).
Recently, the class of hypergeometric polynomials (including $J^{(a,b)}_d$) was studied in the framework of finite free probability theory, see e.g. \cite{MFMP24a, MFMP24b, AFPU24}. 

In particular, we define
$$
\widehat{J}^{(a,b)}_d(x):= J^{(a,b)}_d(-x).
$$
By \cite[Section 5.2]{MFMP24b} and \cite[Proposition 4 and Theorem 5]{DJJ13}, the polynomial $\widehat{J}^{(a,b)}_d$ has $d$ distinct roots in which are all nonnegative when $a<0$ and $b>1$. Furthermore, we define the monic real-rooted polynomial of degree $d$ by
$$
p_d^{(t,\theta,\lambda)}:= D_{t(\lambda-1)} (\widehat{J}^{\ (A, B) }_d), \qquad \text{where} \quad  A=-\frac{t}{\theta} \quad \text{and} \quad B=\frac{t}{\theta(\lambda-1)} +\frac{1}{d},
$$
for $t,\theta>0$ and $1<\lambda \le 1+t/\theta$. In this case, it is easy to check $A<0$ and $B>1$, and therefore $p_d^{(t,\theta,\lambda)}$ also has $d$ distinct roots in which are all nonnegative. Moreover, $p_d^{(t,\theta,\lambda)}$ is orthogonal with respect to the weight function
\begin{align}\label{eq:weight}
w_d^{(A,B)}\left(-\frac{x}{t(\lambda-1)}\right) \propto x^{d(B-1)}(x+t(\lambda-1))^{d(A-B-1)}.
\end{align}

According to recent work \cite{AFPU24}, the ratio of consecutive coefficients of a polynomial plays a role in the S-transform in the framework of finite free probability. In what follows, we apply the results of \cite{AFPU24} to the sequence of polynomials $(p_d^{(t,\theta,\lambda)})_{d\in \N}$. A direct computation shows that
$$
\frac{\widetilde{e}_{k-1}^{(d)}(p_d^{(t,\theta,\lambda)})}{\widetilde{e}_{k}^{(d)}(p_d^{(t,\theta,\lambda)})} = \frac{1}{t(\lambda-1)} \frac{\frac{t}{\theta}+\frac{k-1}{d}}{\frac{t}{\theta(\lambda-1)}-\frac{k-2}{d}}.
$$
As $d\to\infty$ with $k/d\to z\in (0, 1)$, the limiting value of the above consecutive coefficient ratio can be computed as follows:
\begin{align*}
\frac{\widetilde{e}_{k-1}^{(d)}(p_d^{(t,\theta,\lambda)})}{\widetilde{e}_{k}^{(d)}(p_d^{(t,\theta,\lambda)})} &\to \frac{1}{t(\lambda-1)} \frac{\frac{t}{\theta}+z}{\frac{t}{\theta(\lambda-1)}-z} \\
&= \frac{1}{t(\lambda-1)} \frac{(1+\frac{t}{\theta})  -1 - (-z)}{\frac{t}{\theta(\lambda-1)} +(-z)} \\
&= S_{D_{t(\lambda-1)} (f\beta'(\frac{t}{\theta(\lambda-1)}, 1+\frac{t}{\theta}))}(-z) \qquad \text{(by \eqref{eq:Strans_dilation} and \eqref{eq:Stras_fbetaprime})}\\
&= S_{\mu_{t,\theta,\lambda}}(-z) \qquad \text{(by \eqref{eq:mu_freebetaprime_relation})}.
\end{align*}
By \cite[Theorem 1.1]{AFPU24}, the limiting behavior of the empirical root distribution of $ p_d^{(t,\theta,\lambda)}$ can be described as follows.
\begin{prop}\label{prop:lambda_finitefree}
For $t,\theta>0$ and $1<\lambda \le 1+t/\theta$, we have
$$
\rho[[p_d^{(t,\theta,\lambda)}]] \xrightarrow{w} \mu_{t,\theta,\lambda}  \qquad \text{as} \qquad d\to\infty.
$$
\end{prop}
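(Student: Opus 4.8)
The plan is to deduce the weak convergence directly from the finite free probability criterion of \cite[Theorem 1.1]{AFPU24}, for which essentially all of the analytic content has already been assembled. First I would record that, since $A=-t/\theta<0$ and $B=\frac{t}{\theta(\lambda-1)}+\frac1d>1$ throughout the range $1<\lambda\le 1+t/\theta$, each $p_d^{(t,\theta,\lambda)}$ is a monic degree-$d$ polynomial with $d$ distinct strictly positive roots; hence $\rho[[p_d^{(t,\theta,\lambda)}]]\in\calP(\R_{>0})$ is well defined for every $d$, and this sequence of empirical root distributions is the object whose limit we must identify.

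The core of the argument is the convergence of the \emph{finite S-transform}, which in the formulation of \cite{AFPU24} is encoded by the ratios of consecutive normalized coefficients $\widetilde{e}_{k-1}^{(d)}(p_d^{(t,\theta,\lambda)})/\widetilde{e}_{k}^{(d)}(p_d^{(t,\theta,\lambda)})$. The explicit computation carried out just before the statement shows that, as $d\to\infty$ with $k/d\to z\in(0,1)$, this ratio converges to $S_{\mu_{t,\theta,\lambda}}(-z)$; here the $\frac1d$ shift built into $B$ is exactly what produces the clean denominator $\frac{t}{\theta(\lambda-1)}-\frac{k-2}{d}$ in the limit. Thus the finite S-transform of $(p_d^{(t,\theta,\lambda)})_{d\in\N}$ converges on $(0,1)$ to the reflected S-transform $z\mapsto S_{\mu_{t,\theta,\lambda}}(-z)$ of the target measure, and feeding this into \cite[Theorem 1.1]{AFPU24} yields $\rho[[p_d^{(t,\theta,\lambda)}]]\xrightarrow{w}\mu_{t,\theta,\lambda}$, which is the claim.

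The step requiring genuine care is checking that the hypotheses of \cite[Theorem 1.1]{AFPU24} are met in the precise form that theorem demands, and this is where the main obstacle lies. First, the limit function $z\mapsto S_{\mu_{t,\theta,\lambda}}(-z)$ must be recognized as the S-transform of a bona fide compactly supported probability measure on $\R_{>0}$: this is exactly where the restriction $\lambda\le 1+t/\theta$ enters, since by \eqref{eq:atom} the measure $\mu_{t,\theta,\lambda}$ then has no atom at $0$ and, by \eqref{eq:density}, is supported on the compact interval $[\alpha^-,\alpha^+]$, so $S_{\mu_{t,\theta,\lambda}}$ is defined and analytic on $(-1,0)$, matching the domain $-z\in(-1,0)$. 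Second, one must ensure the convergence of the coefficient ratios holds in the mode (for instance, locally uniformly on $(0,1)$) required by \cite{AFPU24}, together with any accompanying tightness or uniform boundedness of the roots; the explicit rational form of the ratio makes these verifications routine, but this is the place where the bookkeeping must be done honestly rather than invoked verbatim.
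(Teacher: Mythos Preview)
Your proposal is correct and follows essentially the same approach as the paper: compute the ratio of consecutive normalized coefficients, identify its limit as $S_{\mu_{t,\theta,\lambda}}(-z)$, and invoke \cite[Theorem~1.1]{AFPU24}. Your additional remarks on verifying the hypotheses (no atom at $0$, compact support, mode of convergence) are more careful than the paper, which simply cites the theorem after the coefficient computation.
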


\subsection{Bessel polynomial}
For $a\in \R \setminus \{i/d: i=1, \dots, d-1\}$, we define $B^{(a)}_d$ as the monic polynomial of degree $d$, whose coefficients are given by
$$
\widetilde{e}_k^{(d)}\left(B^{(a)}_d \right) = \frac{d^k}{(a d)_k} \qquad \text{for} \ k=0,1,\dots, d.
$$
The polynomials $B^{(a)}_d$ are well-known as the {\it Bessel polynomials}. According to \cite[(80)]{MFMP24b}, the polynomial $B^{(a)}_d$ can be represented by a hypergeometric polynomial as follows:
$$
B^{(a)}_d(x) = \frac{(-1)^d}{(ad)_d} {}_2 F_0 (-d, a d-d+1, \cdot \ ; x).
$$

We define 
$$
\widehat{B}_d^{(a)}(x):= B_d^{(a)}(-x).
$$
According to \cite{MFMP24b}, $\widehat{B}_d^{(a)}$ has $d$ distinct roots in which are all nonnegative whenever $a<0$. Furthermore, we define the monic real-rooted polynomial of degree $d$ by
$$
p_d^{(t,\theta,1)} : = D_{t^2/\theta}\left(\widehat{B}_d^{(-t/\theta)}\right), \qquad t,\theta>0.
$$
Since $-t/\theta<0$, the polynomial $p_d^{(t,\theta,1)}$ also has $d$ distinct roots in which are all nonnegative. Then we obtain
\begin{align*}
\frac{\widetilde{e}_{k-1}^{(d)}(p_d^{(t,\theta,1)})}{\widetilde{e}_{k}^{(d)}(p_d^{(t,\theta,1)})} = \frac{1}{t^2}\left(t +\theta\cdot \frac{k-1}{d}\right) \to \frac{1}{t^2} (t+\theta z) = S_{\mu_{t,\theta,1}}(-z)
\end{align*}
as $k/d\to z \in (0,1)$. According to \cite{AFPU24}, we obtain the following result.
\begin{prop}\label{prop:lambda=1_finitefree}
For $t,\theta>0$, we get
$$
\rho[[p_d^{(t,\theta,1)}]] \xrightarrow{w} \mu_{t,\theta,1} \qquad \text{as} \qquad d\to \infty.
$$
\end{prop}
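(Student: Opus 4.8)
The plan is to apply the finite S-transform machinery of \cite{AFPU24} exactly as in the Jacobi case treated in Proposition~\ref{prop:lambda_finitefree}, simply replacing the Jacobi input by the Bessel polynomial. First I would record that $p_d^{(t,\theta,1)}$ is a positive real-rooted monic polynomial of degree $d$: since $-t/\theta<0$, the polynomial $\widehat{B}_d^{(-t/\theta)}$ has $d$ distinct nonnegative roots by \cite{MFMP24b}, and dilation by $t^2/\theta>0$ preserves this property. Hence $\rho[[p_d^{(t,\theta,1)}]]\in\calP(\R_{>0})$, and the associated sequence of ratios $\widetilde{e}_{k-1}^{(d)}/\widetilde{e}_k^{(d)}$ (the finite S-transform in the sense of \cite{AFPU24}) is well defined and positive.

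Next I would carry out the coefficient computation. Starting from the explicit Bessel coefficients $\widetilde{e}_k^{(d)}(B_d^{(a)})=d^k/(ad)_k$, the ratio telescopes to $\widetilde{e}_{k-1}^{(d)}(B_d^{(a)})/\widetilde{e}_k^{(d)}(B_d^{(a)})=a-(k-1)/d$. The reflection $x\mapsto -x$ multiplies the $k$-th coefficient by $(-1)^k$ (the roots of $\widehat{B}_d^{(a)}$ are the negatives of those of $B_d^{(a)}$), and the dilation rule $\widetilde{e}_k^{(d)}(D_c(p))=c^k\widetilde{e}_k^{(d)}(p)$ with $c=t^2/\theta$ and $a=-t/\theta$ then yields
\[
\frac{\widetilde{e}_{k-1}^{(d)}(p_d^{(t,\theta,1)})}{\widetilde{e}_{k}^{(d)}(p_d^{(t,\theta,1)})}=\frac{1}{t^2}\left(t+\theta\cdot\frac{k-1}{d}\right),
\]
as already displayed before the statement. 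Passing to the limit $d\to\infty$ with $k/d\to z\in(0,1)$ gives $\tfrac{1}{t^2}(t+\theta z)$, which by Lemma~\ref{prop_Strans_GFMG} specialized to $\lambda=1$ equals $S_{\mu_{t,\theta,1}}(-z)$.

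Finally I would invoke \cite[Theorem 1.1]{AFPU24}: the convergence of the finite S-transform of $(p_d^{(t,\theta,1)})_{d\in\N}$ to $S_{\mu_{t,\theta,1}}$ forces $\rho[[p_d^{(t,\theta,1)}]]\xrightarrow{w}\mu_{t,\theta,1}$. The one point that requires care---and the only genuine obstacle---is verifying the hypotheses of that theorem: one must confirm that the convergence of the ratios is of the type it demands (locally uniform in $z\in(0,1)$, with the ratios bounded and bounded away from $0$ on compact subsets) and that the limiting function is the S-transform of a compactly supported measure on $\R_{>0}$. The latter holds because $\mu_{t,\theta,1}$ is supported on the compact interval $[\alpha^-,\alpha^+]\subset\R_{>0}$ by \eqref{eq:density} and \eqref{eq:atom}; the former is immediate from the explicit affine dependence of the ratio on $k/d$, which converges uniformly on compact subsets of $(0,1)$ and stays positive there. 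With these verifications the statement follows, paralleling the proof of Proposition~\ref{prop:lambda_finitefree}.
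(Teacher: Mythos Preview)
Your proposal is correct and follows essentially the same approach as the paper: compute the ratio $\widetilde{e}_{k-1}^{(d)}/\widetilde{e}_k^{(d)}$ for $p_d^{(t,\theta,1)}$, identify its limit as $S_{\mu_{t,\theta,1}}(-z)$ via Lemma~\ref{prop_Strans_GFMG}, and invoke \cite[Theorem~1.1]{AFPU24}. Your additional verification of the hypotheses (positivity of roots, uniform convergence of the affine ratios, compact support of $\mu_{t,\theta,1}$) is more explicit than the paper's terse ``According to \cite{AFPU24}'', but the argument is the same.
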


\subsection{Final remark}

These results (Proposition \ref{prop:lambda_finitefree} and \ref{prop:lambda=1_finitefree}) are not essentially new since Mart\'{i}nez-Finkelshtein et al. \cite{MFMP24a, MFMP24b} and Arizmendi et al. \cite{AFPU24} have already studied the relationship between the asymptotic behavior of the empirical root distributions of Jacobi or Bessel polynomials and free probability. Nevertheless, we would like to emphasize that our analysis provides deeper insights into the behavior of the roots of Jacobi or Bessel polynomials, thanks to the understanding gained from the distributional properties of the Meixner-type free gamma distribution and its connection with free entropy. For instance, the weight function \eqref{eq:weight} of $p_d^{(t,\theta,\lambda)}$ belongs to the Pearson class, which may be connected to the Gibbs measure $\rho_{t,\theta,\lambda}$ associated with the potential $V_{t,\theta,\lambda}$ discussed in Section \ref{sec5}. On the other hand, to the best of our knowledge, the weight function of $p_d^{(t,\theta,1)}$ is not explicitly known. From the perspective of potential theory, the weight function $w$ can be expected to take the form $w(x)\propto x^{-2-\frac{t}{\theta}} e^{-\frac{t^2}{\theta x}}$. \\
\vspace{3mm}



\hspace{-6mm}{\bf Acknowledgement.}
The authors would like to thank Takahiro Hasebe (Hokkaido University) for fruitful discussions in relation to this project. The authors wish to express their sincere gratitude to the anonymous referee for carefully reading the manuscript and providing numerous valuable comments and suggestions, which have substantially improved the paper. In particular, we are grateful for the referee's observations regarding the relation between the measures $\mu_{t,\theta,\lambda}$ and the centered free Meixner distributions (Proposition~\ref{prop:relation_freeMeixner}), its connection with Pearson distributions (Remark~\ref{rem:Pearson}), and the orthogonality of $p_d^{(t,\theta,\lambda)}$, which have given us deeper insights than originally anticipated. We would like to express our heartfelt thanks once again.

N. S. was supported by JSPS  Grant-in-Aid for Scientific Research (C) Grant No. 23K03133. Y.U. was supported by JSPS Grant-in-Aid for Young Scientists Grant No. 22K13925.

\vspace{10mm}

\hspace{-6mm}{\bf Noriyoshi Sakuma}\\
Department of Mathematics, Graduate School of Science, Osaka University, 1-1 Machikaneyama, Toyonaka 560-0043, Osaka, Japan.\\
E-mail: sakuma@math.sci.osaka-u.ac.jp\\
\vspace{3mm}

\hspace{-6mm}{\bf Yuki Ueda}\\
Faculty of Education, Department of Mathematics, Hokkaido University of Education, 9 Hokumon-cho, Asahikawa, Hokkaido 070-8621, Japan.\\
E-mail: ueda.yuki@a.hokkyodai.ac.jp

\end{document}